\numberwithin{equation}{section}
\newtheorem{thm}{Theorem}[section]
\newtheorem{de}[thm]{Definition}
\newtheorem{rem}[thm]{Remark}
\newtheorem{cor}[thm]{Corollary}
\newtheorem{prop}[thm]{Proposition}
\newtheorem{lem}[thm]{Lemma}
\renewcommand{\dim}{\noindent\textbf{Proof.} }
\newcommand{\dims}{\noindent\textbf{Proof} }
\newcommand{\finedim}{{\unskip\nobreak\hfil\penalty50
   \hskip2em\hbox{}\nobreak\hfil\mbox{$\Box$ \qquad}
   \parfillskip=0pt \finalhyphendemerits=0\par\medskip}}
\newcommand{\R}{\mathbb{R}}
\newcommand{\Om}{\Omega}
\newcommand{\lam}{\lambda}
\newcommand{\al}{\alpha}
\newcommand{\p}{\partial}
\newcommand{\s}{\sigma}
\newcommand{\ep}{\epsilon}
\newcommand{\om}{\omega}
\newcommand{\ochi}{\overline{\chi}_-}
\newcommand{\chiep}{\chi^\ep_+}
\newcommand{\fs}{\overline{f}}
\newcommand{\us}{\overline{u}}
\newcommand{\beq}{\begin{equation}}
\newcommand{\eeq}{\end{equation}}
\newcommand{\beqs}{\begin{equation*}}
\newcommand{\eeqs}{\end{equation*}}
\newcommand{\beqa}{\begin{eqnarray}}
\newcommand{\eeqa}{\end{eqnarray}}
\newcommand{\beqas}{\begin{eqnarray*}}
\newcommand{\eeqas}{\end{eqnarray*}}
\title[]{Stochastic homogenization of a porous-medium type equation}
\author{Stefania Patrizi}
 \thanks{The author has been supported by the
NSF Grant DMS-2155156 " Nonlinear PDE methods in the study of interphases"}
\address[Stefania Patrizi]{
Department of Mathematics,
University of Texas at Austin,
2515 Speedway Stop C1200,
Austin, Texas 78712-1202, USA}
\email{spatrizi@math.utexas.edu}
\begin{document}
\maketitle

\begin{abstract}
We consider the homogenization problem for the stochastic porous-medium type equation $\p_{t}
u^\epsilon =\Delta f\left(T\left(\frac{x}{\ep}\right)\om,u^\ep\right)$, with a well-prepared initial datum, 
where  $f(T(y)\om,u)$ is  a  stationary process, increasing in $u$,  on a given probability space $(\Om, \mathcal{F}, \mu)$ endowed with an ergodic dynamical system 
$\{T(y)\,:\,y\in\R^N\}$. Differently from the previous literature \cite{afs,fs}, here we do not assume $\Om$ compact. 
We first show that the weak solution $u^\ep$ satisfies a kinetic formulation of the equation, then we exploit the theory of 
 "stochastically two-scale convergence in the mean" developed in \cite{bmw} to show convergence of the kinetic solution to the kinetic solution of an homogenized problem of the form
 $\p_{t}
\overline{u} - \Delta \overline{f}(\overline{u})=0$. 
 The homogenization result for the  weak solutions then follows.
\end{abstract}

%%%%%%%%%%%%%%%%%%%%%%%%%%%%%%%%%%%%%%%%%%%%%%%%
%%%%%%%%%%%%%%%%%%%%%%%%%%%%%%%%%%%%%%%%%%%%%%%%
\section{Introduction}

In this paper we study the behavior as $\ep\rightarrow 0$ of the weak solution $u^\ep(\cdot,\cdot,\om)$ of the porous-medium type equation:  for any $\om\in\Om$
\begin{equation}\label{uepeq}
\begin{cases}
\p_{t}
u^\epsilon =\Delta f\left(T\left(\frac{x}{\ep}\right)\om,u^\ep\right)&\text{in}\quad \R^+\times\R^N\\
u^\epsilon(0,x)=u_0\left(x,T\left(\frac{x}{\ep}\right)\om\right)& \text{on}\quad \R^N
\end{cases}
\end{equation}
where  $f(T(y)\om,u)$ is  a  stationary process, increasing in $u$,  on a given probability space $(\Om, \mathcal{F}, \mu)$ endowed with an ergodic dynamical system 
$\{T(y)\,:\,y\in\R^N\}$. Our model example is 
\beq\label{modelf} f(\om,u)=a(\om)u|u|^{\gamma(\om)}+b(\om)\eeq
with   $\gamma, a,b$ bounded and $\gamma(\om)\geq\gamma_0>0$, $a(\om)\geq a_0>0$ for a.e. $\om\in\Om$.
The initial datum is assumed to be "well-prepared", i.e., of the form $u_0(x,\om)=g(\om,\varphi(x))$,   for some $\varphi\in L^\infty(\R^N)\cap  L^1(\R^N)$, with  $g(\om,\cdot)=f(\om,\cdot)^{-1}$.

The homogenization  problem for porous-medium type equations of the form $\p_{t}u^\ep-\Delta\left(f\left(\frac{x}{\ep},u^\ep\right)\right)$
 in the case in which $f(\cdot,u)$ belongs to an ergodic algebra with mean value, has been studied in \cite{afs,fs}. An  example is when  $f(\cdot,u)$ is almost periodic. 
In this situation it can be proven that the algebra can be identified with the space $C(\Om)$ for some compact set $\Om$ endowed with a Borel probability measure and a continuous ergodic dynamical system. The 
 porous-medium equation then can be  written in the form \eqref{uepeq}. 
 Homogenization is then proven by establishing  the existence of multiscale limit Young
measures  associated with the family of solutions $\{u^\ep\}$, and then by showing that such measures are actually  Dirac masses  concentrated at the solution of an homogenized  porous-medium type limit problem.  
In this setting, the compactification of $\R^N$ provided by the algebras with mean value  plays a fundamental  role.

There is an extensive literature about the homogenization of non-linear first and second order PDE's in periodic and almost periodic settings, starting from the seminal paper \cite{lpv}. In more recent years,  
there has been a resurgence of interest in the homogenization problems in the more general  setting of random stationary ergodic media, see e.g. \cite{ls} and the references therein.
The main difficulty when passing from the periodic or almost periodic setting to the stochastic one is given by the lack of compactness of the probability space, see \cite{ls}.

Motivated by these  results, the goal of this paper is to solve the homogenization problem \eqref{uepeq} removing the compactness assumption on $\Om$. 
 We use a different approach than \cite{afs,fs}, based on the kinetic formulation for the equation \eqref{uepeq}.
The notion of kinetic solutions  for hyperbolic homogeneous conservation laws has been introduced by Lions, Perthame and Tadmor  \cite{lpt},  and then extended by Chen and Perthame  \cite{cp} to parabolic laws which include, as special case, the homogeneous porous-medium equation 
$\p_t u-\Delta f(u)=0.$
In \cite{d}, Dalibard defines a notion of kinetic solutions for heterogeneous parabolic conservation laws of type $\p_t u+\text{div}(A(x,u(x))-\Delta u=0,$ where $A(x,u)$ is a given  flux, periodic in the space variable $x$. The kinetic formulation  is then used  to prove periodic homogenization. In this paper, following this idea, we  derive a kinetic formulation for the Cauchy problem associated to  the heterogeneous porous-medium type equation of the form
\begin{equation*}
\p_{t}
u - \Delta f(x,u)=0\quad\text{in }\R^+\times\R^N.\end{equation*}
The corresponding kinetic equation involves an additional variable and its distributional solution is a discontinuous function. Nevertheless, the advantage of using it  is that the kinetic equation is linear. We can therefore apply the theory of the "stochastically two-scale convergence in the mean" developed in \cite{bmw} by Bourgeat  Mikelic and S. Wright.
These theory is an extension to the stochastic setting of the notion of two-scale convergence previously introduced by Allaire \cite{a} in the context of periodic functions. 

We show the existence of  the kinetic solution of \eqref{uepeq} and we prove its convergence, as $\ep\to 0$, to the kinetic solution of an homogenized porous-medium equation of the form 
 \begin{equation*}
%\begin{cases}
\p_{t}
\overline{u} - \Delta \overline{f}(\overline{u})=0\quad\text{in } \R^+\times\R^N
%\overline{u}(0,x)= \int_\Om u_0\left(x,\om\right)d\mu& \text{on}\quad \R^N
%\end{cases}
\end{equation*}
with initial condition $\overline{u}(0,x)= \int_\Om u_0\left(x,\om\right)d\mu$, where $\overline{f}$ is defined by the formula
 \beqs v=   \int_\Om g(\omega,\overline{f}(v))d\mu, \quad v\in\R\eeqs and $g(\omega,\cdot)=f(\omega,\cdot)^{-1}$. The proof of the convergence result is inspired by the one of the contraction  property of kinetic solutions of parabolic conservation laws, see \cite{cp}. Going back from the kinetic  to the weak solution of  \eqref{uepeq}, we prove  the convergence of   the weak solution $u^\epsilon$ of  \eqref{uepeq} to the weak solution of the homogenized problem. 
 
In order to apply the results in  \cite{bmw}, we need to require  the space $L^2(\Om)$ to be separable.  In the almost periodic case, or more in general in the case of ergodic algebras with mean value,  the corresponding $L^2(\Om)$ is not separable. However in \cite{Casado} the authors have been able to extend the theory of two-scale convergence to this setting,
we will recall their result in Section \ref{ergodicalgebras}. Therefore,  the strategy we adopt here to prove homogenization works  also in the context  of ergodic algebras,  providing a different proof of the analogous  results in 
  \cite{afs,fs} for the case $\Om$ compact. 
  
  \subsection{Organization of the paper}
  The paper is organized as follows. 
 In Section \ref{ansatzsec} we derive the ansatz for $u^\epsilon$. The main convergence result,  Theorem \ref{mainthm},  is stated in Section \ref{mainresultsec}.
 In Section \ref{prelminarsec} we collect some preliminary results concerning the ergodic theory and the stochastically two-scale convergence in the mean that will be used later in the paper. The kinetic formulation for \eqref{uepeq} is then derived in Section \ref{kineticformulation}. Finally, Section \ref{proofmain} is devoted to the proof of Theorem \ref{mainthm}.

%%%%%%%%%%%%%%%%%%%%%%%%%%%%%%%%%%%%%%%%%%%%%%%%
%%%%%%%%%%%%%%%%%%%%%%%%%%%%%%%%%%%%%%%%%%%%%%%%

\section{Main result}
\subsection{The ansatz}\label{ansatzsec}
In order to identify the limit of the solutions $u^\ep(t,x)$ of \eqref{uepeq} as $\ep\to 0$, following the classical idea of the two-scale expansion (see \cite{blp} for a general presentation of this theory), 
we are looking for an  ansatz of the form $U^0\left(t,x,\frac{x}{\ep}\right)$, that is a function  such that  $u^\ep(t,x)-U^0\left(t,x,\frac{x}{\ep}\right)$ converges to 0 as $\ep\to 0$ in some norm. 
To simplify the presentation, we suppose  that we are in periodic setting, i.e., that $u^\ep$ is solution of 
\beq\label{periodicuepeq}\p_{t}
u^\epsilon = \Delta f\left(\frac{x}{\ep},u^\ep\right)\eeq
 with $f(y,u)$  periodic in $y$ and increasing in $u$. 
We consider the following  two-scale expansion for $u^\ep$:
$$u^\ep(t,x)=U^0\left(t,x,\frac{x}{\ep}\right)+\ep U^1\left(t,x,\frac{x}{\ep}\right)+\ep^2U^2\left(t,x,\frac{x}{\ep}\right)+\dots$$
where $U^0(t,x,y),\, U^1(t,x,y)$ and $U^2(t,x,y)$ are periodic in $y$ functions .
Putting this expression in  \eqref{periodicuepeq} and making a Taylor expansion of $f\left(\frac{x}{\ep},\cdot\right)$ around $U^0$, we get
\beqs \p_tU^0-\Delta_x\left[f\left(\frac{x}{\ep},U^0\right)+\p_u f\left(\frac{x}{\ep},U^0\right)(\ep U^1+\ep^2U^2)+\frac{1}{2}\p^2_{uu}f\left(\frac{x}{\ep},U^0\right)\ep^2(U^1)^2\right]+O(\ep)=
0,\eeqs
where $O(\ep)$ contains all the terms multiplied by a power of $\ep$ greater or equal than 1.
Now, identifying the non-positive powers of $\ep$, we   derive an  equation for $U^0$.
The equation corresponding to  $\ep^{-2}$  is the the following 
\beq\label{celleqper} \Delta_y(f(y,U^0(t,x,y)))=0,\eeq where we denote $y=x/\ep$ and, as usual in deriving an ansatz for homegenization problems, we assume $x$ and $y$ to be independent. By the Liouville Theorem  all periodic in $y$ solutions $f(y,U^0(t,x,y))$ 
of \eqref{celleqper} in $\R^N$  are constant in $y$, that is 
$$f(y,U^0(t,x,y))=p(t,x)$$
for any function $p(t,x)$ independent of $y$. We infer that $U^0(t,x,y)$ is of the following form
$$U^0(t,x,y)=g(y,p(t,x))$$
where $$g(y,\cdot)=f^{-1}(y,\cdot).$$
The equation corresponding to the power $\ep^0$ is 
\beq\label{ep0equ}\begin{split} \p_t&U^0-\Delta_x(f(y,U^0))-\text{div}_{y}\nabla_x(\p_uf(y,U^0)U^1)-\text{div}_{x}\nabla_y(\p_uf(y,U^0)U^1)\\&
-\Delta_{y}(\p_uf(y,U^0)U^2)-\frac{1}{2}\Delta_{y}(\p^2_{uu}f(y,U^0)(U^1)^2)=0.
\end{split}\eeq
Assuming that the functions are smooth, we get
\beqs\begin{split} 
\text{div}_{x}\nabla_y(\p_uf(y,U^0)U^1)&=\sum_{i=1}^N\partial_{x_i}\partial_{y_i}(\p_uf(y,U^0)U^1)=\sum_{i=1}^N\partial_{y_i}\partial_{x_i}(\p_uf(y,U^0)U^1)
\\&=\text{div}_{y}\nabla_x(\p_uf(y,U^0)U^1).
\end{split}\eeqs
Thus, averaging  \eqref{ep0equ} with respect to $y$ and using that, by periodicity, 
$$\int_{\mathcal{T}^N}\text{div}_{y}\nabla_x(\p_uf(y,U^0)U^1)\,dy=\int_{\mathcal{T}^N}\Delta_{y}(\p_uf(y,U^0)U^2)\,dy
=\int_{\mathcal{T}^N}\Delta_{y}(\p^2_{uu}f(y,U^0)(U^1)^2)\,dy=0,
$$
with $\mathcal{T}^N$ the $N$-dimensional torus, yields the evolution equation 
$$\p_t \overline{u}-\Delta p(t,x)=0,$$
where 
 \beq\label{defubaransatz}\overline{u}(t,x):= \int_{\mathcal{T}^N} U^0(t,x,y)dy=\int_{\mathcal{T}^N}g(y,p(t,x))dy,\eeq
 and we have used that  $f(y,U^0(t,x,y))=p(t,x)$.
%We expect $p(t,x)$ to be equal to $\overline{f}(\overline{u})$ for some function $\overline{f}$ to be determined. 
The  computations above suggest to define the function 
 $\overline{f}$  implicitly in the following way, for any given $u\in\R$,
 %For any given $u\in\R$, let  $\overline{f}(u)$ implicitly defined by 
\beq\label{deffbaransatz}u=   \int_{\mathcal{T}^N} g(y,\overline{f}(u))dy.\eeq 
Then,  from \eqref{defubaransatz} and \eqref{deffbaransatz} we have that $p(t,x)=\overline{f}(\overline{u}(t,x))$ . We conclude that if  
 $\overline{u}$ is the solution of 
\begin{equation*}
\begin{cases}
\p_{t}
\overline{u} - \Delta \overline{f}(\overline{u})=0&\text{in}\quad \R^+\times\R^N\\
u(0,x)=\int_{\mathcal{T}^N} u_0\left(x,y\right)dy& \text{on}\quad \R^N
\end{cases}
\end{equation*} with $\overline{f}$ defined by \eqref{deffbaransatz},  our guess for $U^0$ is the following 
$$U^0\left(t,x,\frac{x}{\ep}\right)=g\left(\frac{x}{\ep},\overline{f}(\overline{u}(t,x))\right).$$
%The Ansatz in the general case where $f(T(x)\om,u)$ is a  stationary process on a given probability space is given in Section \ref{mainresultsec}, where the convergence result is stated.

%%%%%%%%%%%%%%%%%%%%%%%%%%%%%%%%%%%%%%%%%%%%%%%%%%%%%%%%%%
%%%%%%%%%%%%%%%%%%%%%%%%%%%%%%%%%%%%%%%%%%%%%%%%%%%%%%%%%%

\subsection{Assumptions and main result.} \label{mainresultsec} Let us now introduce the mathematical assumptions we make, we refer
to Section \ref{prelminarsec}  for  the main definitions and some preliminary results. 
Throughout this paper we assume that $(\Om, \mathcal{F}, \mu)$ is a probability space with $L^2(\Om)$ separable,  and  that $\{T(y)\,:\,y\in\R^N\}$ is an ergodic $N$-dimensional dynamical system on $(\Om, \mathcal{F}, \mu)$.  Moreover, we assume that $f:\Om\times\R\rightarrow\R$ is a measurable function  satisfying,  for a.e. $\om\in\Om$:
\renewcommand{\labelenumi}{({\bf H\arabic{enumi}})}
\begin{enumerate}
\item \label{1}  $f(\om,\cdot)$ is striclty increasing and locally Lipschitz  continuous, uniformly in  $\om$. Moreover, $\lim_{u\rightarrow\pm \infty}f(\om,u)=\pm \infty$, uniformly in $\om$;
\item \label{2} $f(T(\cdot)\om,u)$ is continuous and  $f(\cdot,u)\in L^\infty(\Om)$ for all $u\in \R$. 
%For any compact interval $I\subset\R$, $\partial^\alpha_xf(T(x)\om,u)$ and $\partial^\alpha_xf_u(T(x)\om,u)$, $|\alpha|\le 2$, are uniformly bounded for $(x,u)\in\R^N\times I$, and there exists a constant $\theta_0>0$ such that 
%\beqs-\partial_uf(T(x)\om,u)+\theta_0\sum_{i=1}^N|\partial_{x_i,u}f(T(x)\om,u)|\leq 0,\eeqs for all $(x,u)\in\R^N\times I$;
\item \label{4}Let $g(\om,\cdot):=f(\om,\cdot)^{-1}$, then for all $p\in\R$ and $\frac{\partial g}{\partial p}(T(\cdot)\om,\cdot)\in L_{loc}^{1}(  \R^N\times\R)$ uniformly in $\om$.\end{enumerate} 
We assume that the initial data $u_0$ is "well-prepared", that is of the form 
\begin{enumerate}[resume]
\item \label{3} $u_0(x,\om)=g(\om,\varphi(x))$,   for some $\varphi\in L^\infty(\R^N)\cap  L^1(\R^N)$ and $g(\om,\varphi)-g(\om,0)\in L^1(\Om;L^1(\R^N))$.
\end{enumerate} 

\medskip

Observe that (H\ref{1})-(H\ref{4}) are satisfied by functions of the form \eqref{modelf}
%$$ f(\om,u)=a(\om)u|u|^{\gamma(\om)}+b(\om)$$
 with $\gamma, a,b$ bounded stochastic variables, with  $\gamma (T(\cdot))$, $a (T(\cdot))$ and $b (T(\cdot))$ continuous, and $\gamma(\om)\geq\gamma_0>0$, $a(\om)\geq a_0>0$ for a.e. $\om\in\Om$. Moreover, in this case  (H4) is satisfied for any $\varphi\in L^\infty(\R^N)\cap  L^1(\R^N)$.

Under the assumptions (H\ref{1})-(H\ref{3}),  even though $\partial_u f$ can be 0 at some point,  \eqref{uepeq}  still belongs to the  "non-degenerate"  class according to the classification of
 \cite{c}.  Well-posedness  of \eqref{uepeq} in the homogeneous case, i.e.  when the coefficients do not depend explicitly on $(t,x)$, was established by Carrillo in \cite{c}.
 The results of \cite{c} have been extended by Frid and Silva  in \cite{fs} to the case in which $f$ explicitly depends on $x$ and satisfies  (H\ref{1})-(H\ref{2}),   these results in particular 
 guarantee that  for any fixed $\ep>0$ and a.e. $\om\in\Om$ there exists a unique unique weak solution $u^\ep(t,x,\om)$  of \eqref{uepeq} (see Definition \ref{weaksolded}  for  the definition of the weak solution to \eqref{uequ}). 
 Assumptions  (H\ref{4}) and  (H\ref{3}) are needed in order to make sense to the kinetic formulation of \eqref{uepeq} given in Section \ref{kinetichetero}.

A $L^1$ well-posedness theory for the homogeneous anisotropic case, that is for  non-diagonal viscosity matrices was established by Chen and Perthame \cite{cp} and succesiveley extended to the non-homogeneous case in \cite{ck}.

Let $\overline{g}$ be the function defined by 
\beq\label{gbareq}\overline{g}(p)=  \int_\Om g(\omega,p)d\mu, \quad p\in\R.\eeq Since $g(\om,\cdot)$ is strictly increasing,  $\overline{g}$ has inverse $\overline{f}:=\overline{g}^{-1}$  implicitly defined  by the equation 
\beq\label{fbareq}v=   \int_\Om g(\omega,\overline{f}(v))d\mu,\quad v\in\R.\eeq
\begin{lem}\label{fbarproperties}
Let $\overline{f}:\R\to\R$ be defined by \eqref{fbareq}. Then $\overline{f}$ is strictly increasing and locally Lipschitz continuous in $\R$.
\end{lem}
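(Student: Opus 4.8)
The plan is to pass through the averaged inverse $\overline{g}(p)=\int_\Om g(\om,p)\,d\mu$ of \eqref{gbareq}, show it is a strictly increasing homeomorphism of $\R$ whose increments are bounded \emph{below} by a linear function on bounded sets, and then obtain every asserted property of $\overline{f}=\overline{g}^{-1}$ by inversion. First I would record the fibrewise properties inherited from (H\ref{1}): for a.e.\ $\om$ the map $g(\om,\cdot)=f(\om,\cdot)^{-1}$ is continuous, strictly increasing and coercive, with $\lim_{p\to\pm\infty}g(\om,p)=\pm\infty$ uniformly in $\om$. The uniform coercivity of $f$ gives, for each bounded range of $p$, a radius $R$ independent of $\om$ with $|g(\om,p)|\le R$; hence $g(\cdot,p)\in L^\infty(\Om)\subset L^1(\Om)$, so $\overline{g}(p)$ is finite for every $p$, and by dominated convergence (dominating $g(\om,\cdot)$ on a neighbourhood by the uniform bound $R$) $\overline{g}$ is continuous.

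Next I would prove strict monotonicity of $\overline{g}$: for $p_1<p_2$ we have $g(\om,p_1)<g(\om,p_2)$ for a.e.\ $\om$, and integrating against the probability measure $\mu$ yields $\overline{g}(p_1)<\overline{g}(p_2)$. Together with continuity and the limits $\overline{g}(\pm\infty)=\pm\infty$ (again from uniform coercivity), $\overline{g}\colon\R\to\R$ is an increasing homeomorphism, so $\overline{f}=\overline{g}^{-1}$ is well defined on all of $\R$ and is itself strictly increasing and continuous, giving the first assertion.

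The main point is the local Lipschitz bound, obtained by transferring the uniform local Lipschitz constant of $f$ through the two inversions. Fix a bounded interval $I$ of $v$-values; since $\overline{f}$ is increasing, $\overline{f}(I)$ lies in a bounded $p$-interval $J$, and by uniform coercivity the corresponding values $g(\om,p)$, $p\in J$, stay in a fixed compact set $[-R,R]$ for a.e.\ $\om$. On $[-R,R]$ the map $f(\om,\cdot)$ is Lipschitz with a constant $L$ independent of $\om$ by (H\ref{1}); writing $u_i=g(\om,p_i)$, for $p_1<p_2$ in $J$ this gives $p_2-p_1=f(\om,u_2)-f(\om,u_1)\le L(u_2-u_1)=L\big(g(\om,p_2)-g(\om,p_1)\big)$, hence $g(\om,p_2)-g(\om,p_1)\ge L^{-1}(p_2-p_1)$. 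Averaging over $\Om$ preserves this one-sided estimate, so $\overline{g}(p_2)-\overline{g}(p_1)\ge L^{-1}(p_2-p_1)$. Finally, for $v_1<v_2$ in $I$ put $p_i=\overline{f}(v_i)$, so $v_i=\overline{g}(p_i)$ and $v_2-v_1\ge L^{-1}(p_2-p_1)=L^{-1}\big(\overline{f}(v_2)-\overline{f}(v_1)\big)$, whence $\overline{f}(v_2)-\overline{f}(v_1)\le L(v_2-v_1)$, which is exactly local Lipschitz continuity with constant $L$ on $I$.

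I expect the only delicate point to be the uniformity in $\om$: the whole argument reduces to checking that, over a bounded range of $p$, the fibre values $g(\om,p)$ remain in a common compact set, so that a single Lipschitz constant for $f$ applies and survives integration. This is precisely what the uniform coercivity and uniform local Lipschitz hypotheses in (H\ref{1}) deliver, while the measure-theoretic steps (finiteness of $\overline{g}$, its continuity, and preservation of the lower estimate under averaging) are routine because $\mu$ is a probability measure.
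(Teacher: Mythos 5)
You prove the lemma correctly, but note that the paper itself contains no internal proof to compare against: it defers entirely to the proof of Lemma 6.1 in \cite{fs}. Your argument is the natural self-contained one, and it supplies exactly what the citation leaves out: identify $\overline{f}$ as $\overline{g}^{-1}$ with $\overline{g}$ as in \eqref{gbareq}, check that $\overline{g}$ is a strictly increasing homeomorphism of $\R$, and transfer the uniform-in-$\om$ local Lipschitz constant $L$ of $f(\om,\cdot)$ into the key lower bound $\overline{g}(p_2)-\overline{g}(p_1)\ge L^{-1}(p_2-p_1)$ on bounded sets, which inverts to $\overline{f}(v_2)-\overline{f}(v_1)\le L(v_2-v_1)$. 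This is the right mechanism, and the uniformity bookkeeping (the values $g(\om,p)$ trapped in a common compact set so that a single constant $L$ works for a.e.\ $\om$ and survives integration against the probability measure $\mu$) is handled correctly. Two small points would each deserve a line in a full write-up. First, the measurability of $\om\mapsto g(\om,p)$, needed for \eqref{gbareq} to be meaningful, follows from $\{\om\,:\,g(\om,p)\le u\}=\{\om\,:\,f(\om,u)\ge p\}$ together with the measurability of $f(\cdot,u)$. Second, the divergence $\overline{g}(p)\to\pm\infty$ as $p\to\pm\infty$ is not a consequence of the uniform coercivity of $f$ alone, as you assert: it requires a uniform (in $\om$) upper bound on $f(\om,R)$ for each fixed $R$, which comes either from (H\ref{2}) or from (H\ref{1}) by combining coercivity at $-\infty$ with the uniform local Lipschitz bound, $f(\om,R)\le f(\om,-R')+L(R+R')$. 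Neither point affects the validity of your scheme.
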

For the  proof of Lemma \ref{fbarproperties} we refer to the proof of  Lemma 6.1 in \cite{fs}.  
%\begin{prop} The function  $\overline{f}:\R\rightarrow\R$ defined by \eqref{fbareq} is monotone increasing and locally Lipschitz continuous.
%\end{prop}
 By the results of \cite{c} and Lemma \ref{fbarproperties},   there exists a unique weak solution of the Cauchy problem 
\begin{equation}\label{uequ}
\begin{cases}
\p_{t}
\overline{u} - \Delta \overline{f}(\overline{u})=0&\text{in}\quad \R^+\times\R^N\\
\overline{u}(0,x)= \int_\Om u_0\left(x,\om\right)d\mu& \text{on}\quad \R^N.
\end{cases}
\end{equation}
 We are now ready to state our main result. 

\begin{thm}\label{mainthm} Let $\overline{u}$ be the unique weak solution of 
\eqref{uequ},
where $ \overline{f}$ is defined by \eqref{fbareq}. Set
$$U(t,x,\om):=g(\om,\overline{f}(\overline{u}(t,x))),$$
then  as $\ep\rightarrow 0$,   we have 
\beq\label{mainthmresult}\int_\Om\left\|u^\ep(t,x,\om)-U\left(t,x,T\left(\frac{x}{\ep}\right)\om\right)\right\|_{L^1_{loc}(\R^+\times\R^N)}d\mu \to 0,\eeq and
$\int_\Om u^\ep d\mu\rightarrow\overline{u}$ in the weak star topology of $L^\infty(\R^+\times\R^N)$.
\end{thm}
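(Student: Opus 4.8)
The plan is to follow the three-step strategy announced in the introduction: encode $u^\ep$ through its kinetic function, pass to the two-scale limit in the resulting \emph{linear} kinetic equation, and then identify and characterize that limit via a contraction argument before translating everything back to the weak solutions. First I would introduce the kinetic function $\chi^\ep(t,x,\xi,\om)=\chi(u^\ep(t,x,\om),\xi)$, with $\chi(u,\xi)=\mathbf{1}_{0<\xi<u}-\mathbf{1}_{u<\xi<0}$, associated with the weak solution of \eqref{uepeq}, and record the kinetic formulation derived in Section \ref{kineticformulation}. The decisive feature is that, once the extra kinetic variable $\xi$ is introduced, the equation satisfied by $\chi^\ep$ is linear in $\chi^\ep$, with the nonlinearity and the fast oscillation entering only through the coefficient $\p_u f(T(\frac{x}{\ep})\om,\xi)$ (equivalently through $\p g/\p p$), up to a nonnegative kinetic dissipation measure. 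Assumptions (H\ref{4}) and (H\ref{3}) are exactly what make the kinetic measure and the initial datum $\chi(g(\om,\varphi(x)),\xi)$ well-defined.

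Next I would establish the uniform a priori bounds needed for compactness: $\|u^\ep\|_{L^\infty}$ and $\|u^\ep\|_{L^1}$ together with the corresponding bounds on $f(T(\frac{x}{\ep})\om,u^\ep)$, all following from the well-posedness theory of \cite{c,fs} and the well-prepared initial datum. With these in hand I would invoke the stochastically two-scale convergence in the mean of \cite{bmw}: along a subsequence, $\chi^\ep$ two-scale converges in the mean to a limit $\chi_0(t,x,\om,\xi)$, and the flux $f(T(\frac{x}{\ep})\om,u^\ep)$ converges likewise. Separability of $L^2(\Om)$ is precisely the hypothesis that makes this machinery applicable.

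The heart of the proof is the identification of $\chi_0$. Testing the kinetic equation against oscillating test functions of the form $\psi(t,x)\phi(T(\frac{x}{\ep})\om)$ and passing to the two-scale limit, I would first extract the stochastic counterpart of the cell equation \eqref{celleqper}: the leading-order balance forces the flux associated with the limit to be invariant under $\{T(y)\}$, hence constant in $\om$ by ergodicity, so that $\chi_0$ corresponds to a two-scale profile $U^0(t,x,\om)=g(\om,p(t,x))$ for a macroscopic pressure $p(t,x)$. Averaging the next-order balance over $\Om$ — using the ergodic theorem to replace spatial by ensemble averages and stationarity to annihilate the divergence-in-$\om$ corrector terms, exactly as the periodic identities killed the $\text{div}_y$ integrals in the ansatz — then shows that $\overline{u}(t,x)=\int_\Om g(\om,p)\,d\mu=\overline{g}(p)$ solves the homogenized kinetic equation, whence $p=\overline{f}(\overline{u})$ by \eqref{fbareq}. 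I expect this identification step to be the main obstacle: justifying the vanishing of the corrector/divergence contributions without the compactification of $\R^N$ afforded by algebras with mean value, relying solely on stationarity and ergodicity, is where the stochastic setting genuinely departs from \cite{afs,fs}.

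Finally, to upgrade subsequential convergence to convergence of the whole family and to obtain the strong $L^1$ statement, I would run a contraction argument in the spirit of the doubling-of-variables proof of \cite{cp}. Comparing the limiting kinetic solution with the kinetic function of the homogenized profile $U=g(\om,\overline{f}(\overline{u}))$, and exploiting the linearity of the kinetic equation, the sign of the dissipation measure, and the monotonicity of $f(\om,\cdot)$, gives $\chi_0(t,x,\om,\xi)=\chi(U(t,x,\om),\xi)$; in particular $\chi_0$ takes only the values $0,\pm1$, which is the equilibrium (Dirac) structure encoding strong convergence. Since the limit is thereby uniquely determined independently of the subsequence, the full family converges. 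Translating back through $\int_\R\chi(u,\xi)\,d\xi=u$ yields \eqref{mainthmresult}, and integrating in $\om$ together with weak-$\ast$ compactness identifies $\int_\Om u^\ep\,d\mu\to\overline{u}$ in $L^\infty(\R^+\times\R^N)$ weak-$\ast$.
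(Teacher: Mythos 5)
Your overall strategy is the paper's: kinetic formulation, stochastic two-scale convergence in the mean, ergodicity to remove the $\om$-dependence of the limit, a Kruzhkov/Chen--Perthame doubling argument against the homogenized profile, and translation back to weak solutions. However, two of your steps have genuine gaps. First, the kinetic function you define, $\chi^\ep=\chi(u^\ep,\xi)$ with $\chi(u,\xi)={\bf 1}_{0<\xi<u}-{\bf 1}_{u<\xi<0}$, is the \emph{homogeneous} one, and in the heterogeneous setting it does not satisfy a closed linear equation: constants in $u$ are no longer stationary solutions of $\p_t u=\Delta f\left(T\left(\frac{x}{\ep}\right)\om,u\right)$, so the entropy inequalities with $(u-\xi)_+$ produce an uncontrolled term $\Delta_x\left[f\left(T\left(\frac{x}{\ep}\right)\om,\xi\right)\right]$. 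The whole point of Section \ref{kinetichetero} (following Dalibard) is to replace constants by the stationary solutions $v(x,p)=g(x,p)$ and to work with $\chiep={\bf 1}_{\{p<f\left(T\left(\frac{x}{\ep}\right)\om,u^\ep\right)\}}$ in the \emph{flux} variable $p$, with the oscillating weight $\frac{\p v}{\p p}$ multiplying $\p_t$ as in \eqref{chiepeq}; your parenthetical about $\p g/\p p$ suggests you sense this, but the formulation you actually wrote is the one that fails. Consequently the final translation back to \eqref{mainthmresult} cannot use $\int_\R\chi\,d\xi=u$; it requires integrating $\chiep\,\frac{\p v}{\p p}$ in $p$ over $\{\fs(\us)<p<p_0\}$ with a cutoff $p_0>\|\varphi\|_{L^\infty}$, as in the paper's concluding step.

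Second, and more seriously, you propose to pass to the two-scale limit first and then run the contraction between the limit $\chi_0$ and the kinetic function of $U$. This fails as stated: after the limit, the dissipation measure $m^0$ of Lemma \ref{mepconvlem} is only known to be a nonnegative measure (it no longer has the structure $|\nabla f|^2\delta_{f}(p)$), and $\chi^0_+$ is only known to take values in $[0,1]$, so in the doubling computation the cross term involving $\nabla\chi^0_+\cdot\nabla\ochi$ has no sign and cannot be absorbed into the perfect square $-\left|\nabla_x\left[f\left(T\left(\frac{x}{\ep}\right)\om,u^\ep\right)\right]-\nabla_y\left[\fs(\us)\right]\right|^2$ that drives the argument. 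The paper is forced to run the doubling at the $\ep$ level (Lemma \ref{chi0+chibar-=0lem}): it tests \eqref{chiepeq} against $H_\sigma(p-\fs(\us(s,y)))\Phi$ and \eqref{chibareq-} against $H_\sigma\left(f\left(T\left(\frac{x}{\ep}\right)\om,u^\ep\right)-p\right)\Phi$, uses the explicit forms of $m^\ep$ and $\overline{m}$ to obtain the sign, and only then lets $\ep\to0$, using the $\om$-independence of $\chi^0_+$ from Lemma \ref{ergodiclimit}. Relatedly, your intermediate identification step (``averaging the next-order balance'' and annihilating ``divergence-in-$\om$ corrector terms'') is both unjustified and unnecessary: the cell-equation argument only yields that $\chi^0_\pm$ are independent of $\om$, not that they are indicator functions of a profile $g(\om,p(t,x))$ (that equilibrium structure is exactly what the contraction argument and Corollary \ref{x0=xbarcor} establish), and no stationary correctors are available in the stochastic setting; the paper avoids correctors entirely by defining the candidate limit $\us$ upfront via \eqref{fbareq}--\eqref{uequ} and proving $\chi^0_\pm=\overline{\chi}_\pm$ directly.
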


%%%%%%%%%%%%%%%%%%%%%%%%%%%%%%%%%%%%%%%%%%%%%%%%%%%%%%%%%%
%%%%%%%%%%%%%%%%%%%%%%%%%%%%%%%%%%%%%%%%%%%%%%%%%%%%%%%%%%
\section{Preliminary results}\label{prelminarsec}
\subsection{Ergodic theory}\label{ergodictheory}
Let us recall some basic facts about the ergodic theory that will be needed in the next sections, we refer to the book  \cite{jko} for a more complete presentation.

Let $(\Om, \mathcal{F}, \mu)$ be a probability space.% with $L^2(\Om)$ separable. 
\begin{de}An $N$-dimensional dynamical system on $(\Om, \mathcal{F}, \mu)$ is a family of maps
$T(y):\Om\rightarrow\Om$, $y\in\R^N$, which satisfies the following conditions:
\begin{itemize} 
\item[(i)] {\bf (Group property)} $T(0)=I$, where $I$ is the identity map on $\Om$, and $T(y+z)=T(y)T(z)$, $\forall y,z\in\R^N$;
\item[(ii)] {\bf (Invariance)} The maps $T(y):\Om\rightarrow\Om$ are measurable and $\mu(T(y)E)=\mu(E)$, $\forall y\in \R^N,\, \forall E\in  \mathcal{F}$;
\item[(iii)]{\bf (Measurability)} Given any $F\in  \mathcal{F}$ the set $\{(y,\omega)\in \R^N\times\Om\,:\, T(y)\omega\in  F\}\subset \R^N\times\Om$ is measurable 
with respect to the $\sigma$-algebra product 
$\mathcal{L}_N\otimes  \mathcal{F},$ where $\mathcal{L}_N$ is the $\sigma$-algebra of the Lebesgue measurable sets of $\R^N$. 
\end{itemize}
\end{de}

%\begin{de}
%If $\Om$ is a topological space, an  $N$-dimensional dynamical system on $(\Om, \mathcal{F}, \mu)$ is said to be continuous if the mapping
%$(y,\om)\to T(y)\om$ is continuou from $\R^N\times\Om$ to $\R$.
%\end{de}

\begin{de}[Ergodic $N$-dimensional dynamical system] A $\mathcal{F}$-measurable function $f:\Om\rightarrow\R$ is called invariant if $f(T(y)\om)=f(\omega)$ 
$\mu$-almost everywhere in $\Om$, for all $y\in\R^N$. A dynamical system is said to be ergodic if every invariant function is $\mu$-equivalent 
to a constant in $\Om$.  
\end{de}
\begin{de}[Stationary process] A stochastic process $\tilde{F}:\R^N\times\Om\rightarrow\R$ is called stationary if
$$\tilde{F}(y+y',\om)=\tilde{F}(y,T(y')\om)\quad \text{for all }y,y'\in\R^N\text{ and a.e. }\om\in\Om.$$ If  the $N$-dimensional dynamical system $\{T(y):y\in \R^N\}$ is ergodic, then 
$\tilde{F}$ is said stationary ergodic.
\end{de}
\begin{rem}
It is easily checked that a stochastic process $\tilde{F}:\R^N\times\Om\rightarrow\R$ is stationary if and only if there exists a stochastic variable $F:\Om\rightarrow\R$ such that
$$\tilde{F}(y,\om)=F(T(y)\om).$$
\end{rem}
Given the probability space $(\Om, \mathcal{F}, \mu)$,  as usual, for $1\le p<+\infty$, let us denote by $L^p(\Om)=L^p(\Om,\mu)$ be the space of the equivalent classes of  measurable functions $g:\Om\to\R$ such that $|g|^p$ is $\mu$-integrable on $\Om$, and by $L^\infty(\Om)=L^\infty(\Om,\mu)$ the space of $\mu$-essentially bounded measurable functions. 
Let $T(y)$, $y\in\R^N$, be an  $N$-dimensional dynamical system on   $(\Om, \mathcal{F}, \mu)$. If $g\in L^p(\Om)$, then almost all its realizations $g(T(y)\om)$ belong to $L^p_{loc}(\R^N)$. 
Moreover, $T(y)$ induces a group $\{U(y)\,:\,y\in\R^N\}$ of unitary operators on $L^2(\Om)$ defined by 
$$(U(y)h)(\om)=h(T(y)\om),\quad y\in\R^N,\,\om\in\Om,\,h\in L^2(\Om)$$ which turns out to be strongly continuous in $L^2(\Om)$.

Let $D_1,...,D_N$ denote the infinitesimal generators of the group with $\mathcal{D}_1,...,\mathcal{D}_N$ their respective domains in  $L^2(\Om)$, i.e., for $h\in\mathcal{D}_i$
$$(D_ih)(\om):=\lim_{y_i\neq 0,\,y_i\rightarrow 0\atop y_j=0,\,j\neq i} \frac{h(T(y)\om)-h(\om)}{y_i},\quad i=1,\ldots,N$$ in the sense of convergence  in $L^2(\Om)$. 
Then,
for  $h\in\mathcal{D}_i$,  for a.e. $\om\in\Om$, the  realization $h(T(y)\om)$ possesses a weak derivative $\p_{y_i}(h(T(y)\om))\in L^2_{loc}(\R^N)$ and the following equality holds
\beq\label{Dh=ph}(D_ih)(T(y)\om)=\p_{y_i}(h(T(y)\om))\quad\text{for a.e. }y\in\R^N.\eeq  The unitary of the group
 $\{U(y)\,:\,y\in\R^N\}$ implies that the operators $D_i$ are skew-symmetric, i.e., for $h,\, g\in \mathcal{D}_i$ we have 
 $$\int_\Om D_i h g d\mu=-\int_\Om hD_i  g d\mu\quad i=1,\ldots,N.$$
Define $\mathcal{D}(\Om)=\cap_{i=1}^N \mathcal{D}_i$  and 
\beq\label{Dinfinity}D^\infty(\Om)=\{h\in L^\infty(\Om)\cap \mathcal{D}(\Om)\,:\, D^\al h\in L^\infty(\Om)\cap \mathcal{D}(\Om),\text{ for all multi-indeces }\al\}.\eeq
For a function $h\in L^2(\Om)$,  the  stochastic weak derivative $D^\al f$ of $f$ is the linear functional on $\mathcal{D}^\infty(\Om)$ defined by
$$(D^\al f)\varphi=(-1)^{|\al|}\int_\Om fD^\al\varphi d\mu,\quad \varphi\in\mathcal{D}^\infty(\Om).$$
The following result is proven in \cite{bmw}.
\begin{lem}[\cite{bmw}, Lemma 2.3]\label{lemgrad0const}Assume  the dynamical system $\{T(y):y\in \R^N\}$ to be ergodic and $L^2(\Om)$ separable. Let $h\in L^2(\Om)$ such that $D_ih=0$ for any $i=1,\ldots,N$, then $h$ is $\mu$-equivalent to a constant in $\Om$.
\end{lem}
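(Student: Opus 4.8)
The plan is to reduce the statement to the defining property of ergodicity: once we know that $h$ is \emph{invariant} in the sense that $h(T(y)\om)=h(\om)$ $\mu$-a.e.\ for every $y\in\R^N$, the ergodicity of $\{T(y)\}$ immediately forces $h$ to be $\mu$-equivalent to a constant. Thus the entire content of the lemma lies in promoting the infinitesimal condition $D_ih=0$, $i=1,\dots,N$, to the global invariance $U(y)h=h$ for all $y\in\R^N$, where $U(y)$ is the strongly continuous unitary group induced by $T(y)$ on $L^2(\Om)$.

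For the main step I would work one coordinate at a time. Fix $i$ and consider the one-parameter strongly continuous unitary group $t\mapsto U(te_i)$, whose infinitesimal generator is $D_i$ by definition. Since $h\in\mathcal{D}_i$, the standard semigroup calculus gives that the map $t\mapsto U(te_i)h$ is strongly differentiable in $L^2(\Om)$ with $\frac{d}{dt}U(te_i)h=U(te_i)D_ih$; as $D_ih=0$ this derivative vanishes identically, so $U(te_i)h=U(0)h=h$ for every $t\in\R$. Using the group property $T(y+z)=T(y)T(z)$, which lets me factor $U(y)=U(y_1e_1)\cdots U(y_Ne_N)$, I can apply this coordinatewise to conclude $U(y)h=h$ for every $y\in\R^N$. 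Reading $U(y)h=h$ back as an equality in $L^2(\Om)$ says precisely that $h(T(y)\om)=h(\om)$ for $\mu$-a.e.\ $\om$, for each fixed $y$; this is the invariance required above.

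An alternative, more hands-on route stays at the level of realizations and uses the identity \eqref{Dh=ph}. From $D_ih=0$ and \eqref{Dh=ph} we get, for a.e.\ $\om$, that the realization $y\mapsto h(T(y)\om)$ has vanishing weak gradient on the connected set $\R^N$, hence equals a constant $c(\om)$ for a.e.\ $y$; writing $c(\om)=\int_{[0,1]^N}h(T(y)\om)\,dy$ exhibits $c$ as a measurable function of $\om$. One then checks, using $T(y+z)=T(y)T(z)$ and a shift of the a.e.-in-$y$ identity, that $c(T(z)\om)=c(\om)$ for a.e.\ $z$ and a.e.\ $\om$, so $c$ is invariant and ergodicity again gives that $c$, and therefore $h$, is constant. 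I expect the main obstacle in either route to be the bookkeeping of null sets: the condition $D_ih=0$ is an $L^2$ statement, and one must pass carefully to almost-everywhere identities that hold for the \emph{uncountable} family of shifts $y\in\R^N$ simultaneously (equivalently, verify that the orbit-constant $c(\om)$ is genuinely independent of $\om$ rather than merely locally constant along each orbit). The separability of $L^2(\Om)$, assumed throughout, keeps the functional-analytic framework—strong continuity of $U(y)$ and the generator calculus—available, but the decisive input is the ergodicity hypothesis.
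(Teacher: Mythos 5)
The first thing to note is that there is no proof in the paper to compare yours against: the paper imports this statement verbatim as Lemma~2.3 of \cite{bmw} and never proves it, so your argument is a self-contained substitute for a citation. On its own merits, your first route is correct and clean: $D_i$ with domain $\mathcal{D}_i$ is, by the paper's definition, exactly the generator of the one-parameter strongly continuous unitary group $t\mapsto U(te_i)$, so for $h\in\mathcal{D}_i$ the standard identity $\frac{d}{dt}U(te_i)h=U(te_i)D_ih=0$ gives $U(te_i)h=h$, and the factorization $U(y)=U(y_1e_1)\cdots U(y_Ne_N)$ (from the group property of $T$) yields $U(y)h=h$ for \emph{every} $y\in\R^N$. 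This is precisely the notion of invariance in the paper's definition of ergodicity (the null set is allowed to depend on $y$), so ergodicity finishes the proof. Your second route also works, and the null-set bookkeeping you worry about is resolvable: from ``for a.e.\ $\om$, $h(T(y)\om)=c(\om)$ for a.e.\ $y$'', the group property and the $T$-invariance of $\mu$ upgrade the conclusion to $c(T(z)\om)=c(\om)$ for a.e.\ $\om$, for \emph{every} fixed $z$ (not merely a.e.\ $z$), and a Fubini argument identifies $h$ with $c$ in $L^2(\Om)$.

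One caveat deserves attention. For an $h$ assumed only to lie in $L^2(\Om)$, the hypothesis ``$D_ih=0$'' is ambiguous: it can be read strongly ($h\in\mathcal{D}_i$ and the generator applied to $h$ vanishes), which is what both of your routes use (the semigroup calculus needs $h\in\mathcal{D}_i$, and so does \eqref{Dh=ph}), or weakly (the stochastic weak derivative, defined in the paper as a functional on $D^\infty(\Om)$, vanishes). Your proof as written covers only the strong reading. The weak reading, however, reduces to the strong one by exactly the mollification device the paper itself uses in proving Lemma~\ref{liouvillelemma}: set $h^\delta(\om)=\int_{\R^N}K_\delta(z)h(T(z)\om)\,dz$, check that $h^\delta\in D^\infty(\Om)$ with $D_ih^\delta=0$ in the strong sense (mollification commutes with $D_i$ and maps $D^\infty(\Om)$ into itself), apply your argument to conclude each $h^\delta$ is constant, and let $\delta\to0$ using $h^\delta\to h$ in $L^2(\Om)$. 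With that one additional step your proof covers both readings; as you correctly observe, the decisive input is ergodicity, while separability of $L^2(\Om)$ plays no explicit role in the argument.
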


Using Lemma \ref{lemgrad0const}, we can prove the following Liouville type result that will be needed  in Section \ref{proofmain}.
\begin{lem}\label{liouvillelemma}Assume  the dynamical system $\{T(y):y\in \R^N\}$ to be ergodic and $L^2(\Om)$ separable.  Let  $h\in L^\infty(\Om)\cap L^2(\Om)$ such that $\Delta h=0$, then $h$ is $\mu$-equivalent to a constant in $\Om$.
\end{lem}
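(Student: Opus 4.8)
The plan is to deduce this from Lemma~\ref{lemgrad0const}: if I can show that $D_ih=0$ for every $i=1,\dots,N$, then ergodicity (via Lemma~\ref{lemgrad0const}) forces $h$ to be $\mu$-equivalent to a constant. Formally this should follow from the integration-by-parts identity $\int_\Om h\,\Delta h\,d\mu=-\sum_{i=1}^N\int_\Om (D_ih)^2\,d\mu$, whose right-hand side would vanish since $\Delta h=0$. The obstruction is that $h\in L^\infty(\Om)\cap L^2(\Om)$ carries no a priori first-order regularity: the hypothesis $\Delta h=0$ holds only in the weak sense against test functions in $\mathcal D^\infty(\Om)$, and I do not yet know that $h\in\mathcal D_i$, so the displayed identity is not directly legitimate. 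I would resolve this by a mollification along the group $\{U(y)\}$, which plays here the role of elliptic regularization.

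Concretely, fix a standard mollifier $\rho_\delta\in C_c^\infty(\R^N)$ and set
\[
h_\delta(\om):=\int_{\R^N}\rho_\delta(y)\,h(T(y)\om)\,dy .
\]
Differentiating under the integral in $y$ and using \eqref{Dh=ph} shows that $h_\delta\in\mathcal D^\infty(\Om)$, with $\|h_\delta\|_{L^\infty(\Om)}\le\|h\|_{L^\infty(\Om)}$ and $D^\al h_\delta$ obtained by convolving $h$ against $\partial^\al\rho_\delta$; moreover strong continuity of $\{U(y)\}$ on $L^2(\Om)$ gives $h_\delta\to h$ in $L^2(\Om)$ as $\delta\to0$. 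The crucial step is to verify that $h_\delta$ is harmonic, i.e. $\Delta h_\delta=0$. For any $\psi\in\mathcal D^\infty(\Om)$ I would compute, using Fubini, the invariance $\mu(T(y)E)=\mu(E)$ (to move the translation off $h$ and onto $\psi$), and the fact that the abelian group $\{U(y)\}$ commutes with each $D_i$ and hence with $\Delta$,
\[
\int_\Om h_\delta\,\Delta\psi\,d\mu=\int_{\R^N}\rho_\delta(y)\,(\Delta h)\big(U(-y)\psi\big)\,dy=0,
\]
the last equality because $U(-y)\psi\in\mathcal D^\infty(\Om)$ and $\Delta h=0$. Since $\Delta h_\delta$ is a genuine $L^2(\Om)$ function and $\mathcal D^\infty(\Om)$ is dense in $L^2(\Om)$, this yields $\Delta h_\delta=0$ pointwise a.e.

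With $h_\delta$ smooth and harmonic the integration by parts is now fully justified: using the skew-symmetry $\int_\Om D_i a\,b\,d\mu=-\int_\Om a\,D_ib\,d\mu$ with $a=h_\delta$ and $b=D_ih_\delta$ gives
\[
\sum_{i=1}^N\int_\Om (D_ih_\delta)^2\,d\mu=-\int_\Om h_\delta\,\Delta h_\delta\,d\mu=0,
\]
so $D_ih_\delta=0$ for all $i$. Lemma~\ref{lemgrad0const} then forces $h_\delta$ to equal some constant $c_\delta$, and letting $\delta\to0$ in $L^2(\Om)$ shows that $h$ is $\mu$-equivalent to a constant. I expect the main obstacle to be the harmonicity of the mollification, namely the identity $\Delta h_\delta=0$: it is exactly here that one must exploit the measure-preserving property of the $T(y)$ together with the commutation of translations and the generators $D_i$, and it is this step that upgrades the distributional hypothesis $\Delta h=0$ into usable regularity.
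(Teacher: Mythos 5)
Your proposal is correct and follows essentially the same route as the paper's proof: mollify $h$ along the group via $h^\delta(\om)=\int_{\R^N}K_\delta(z)h(T(z)\om)\,dz$, show that the mollification is harmonic, reduce to Lemma \ref{lemgrad0const}, and pass to the limit $\delta\to0$ in $L^2(\Om)$. The only (minor) divergence is in the middle step: the paper deduces that each $D_i h^\delta$ is constant and then invokes the $L^\infty$ bound and ergodicity to force $h^\delta(T(y)\om)=h^\delta(\om)$, whereas you get $D_ih_\delta=0$ directly from the energy identity $\sum_{i=1}^N\int_\Om (D_ih_\delta)^2\,d\mu=-\int_\Om h_\delta\,\Delta h_\delta\,d\mu=0$ and apply Lemma \ref{lemgrad0const} to $h_\delta$ itself --- a slightly cleaner variant which, as a bonus, never actually uses the boundedness of $h$.
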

\dim
Let us introduce a smooth approximation of $h$. A classical way to do it consists in introducing 
an even function $K$ such that 
$$K\in C^\infty_0(\R^N),\quad \int_{\R^N}K(z)dz=1,\quad K\ge 0,$$
and set
$$h^\delta(\om)=\int_{\R^N}K_\delta(z)h(T(z)\om)dz,$$ 
where $K_\delta(z)=\delta^{-N}K(\delta^{-1}z)$.  It turns out that $h^\delta\in D^\infty(\Om)$, $U(y)h^\delta$ is infinitely differentiable as a function of $y\in\R^N$ and  $$\lim_{\delta\rightarrow 0}\|h^\delta-h\|_{ L^2(\Om)}=0,$$ see  \cite{jko}. Moreover $h^\delta$ satisfies 
$$\int_\Om \Delta \varphi(\om)h^\delta(\om)d\mu=-\sum_{i=1}^n\int_\Om  D_i \varphi(\om)D_i h^\delta(\om)d\mu=0$$ for any $\varphi\in D^\infty(\Om)$. Lemma \ref{lemgrad0const} then implies that $D_i h^\delta(\om)$ is equivalent to a constant in $\Om$. In particular, for a.e. $\om\in\Om$ and any $y\in\R^N$, $\p_{y_i}(h^\delta(T(y)\om))=(D_i h^\delta)(T(y)\om)$ is constant. Since in addition $h^\delta\in L^\infty(\Om)$, we infer that $\p_{y_i}(h^\delta(T(y)\om))=0$ for a.e. $\om\in\Om$, i.e., $h^\delta(T(y)\om)=h^\delta(\om)$ for a.e. $\om\in\Om$ and every $y\in\R^N$. The ergodicity of the dynamical system $\{T(y):y\in \R^N\}$ then implies that $h^\delta$ is equivalent to a constant in $\Om$.
Passing to the limit  as $\delta\rightarrow 0$ we conclude   that $h$ is equivalent to a constant in $\Om$.
\finedim
%Let us finally recall the Birkhoff Ergodic Theorem, see \cite{ds}.
%\begin{thm}[Birkhoff Ergodic Theorem]\label{bethm} Let $\{T(y):y\in \R^N\}$ be an ergodic dynamical system on a probability space  $(\Om, \mathcal{F}, \mu)$ and let $h\in L^p(\Om)$, $p\geq 1$. Then for almost all $\om\in\Om$ and any bounded Lebesgue measurable set $A$
%$$\lim_{\ep\rightarrow 0}\frac{1}{|A|}\int_A h \left(T\left(\frac{x}{\ep}\right)\om\right)dx=\int_\Om hd\mu.$$
%\end{thm}

%%%%%%%%%%%%%%%%%%%%%%%%%%%%%%%%%%%%%%%%%%%%%%%%%%%%%

\subsection{Stochastically two-scale convergence in the mean}
Following an idea of Nguetseng \cite{n}, Allaire \cite{a} defined the notion of two-scale convergence in the periodic setting. Bounded sequence in $L^2(Q)$, where $Q$ is a given domain, are proven to be relatively compact with respect to this  type of convergence. 
%The result is the following:
%\begin{thm} Let $\{w^\ep\}$ be a bounded sequence in $L^2(Q)$. Then there exists a subsequence, still denoted by $\{w^\ep\}$, and a function 
%$w_0(x,y)\in  L^2(Q\times \mathcal{T}^N)$ such that 
%$$\lim_{\ep\rightarrow 0}\int_Q w^\ep(x)\psi\left(x,\frac{x}{\ep}\right)dx=\int_{Q\times \mathcal{T}^N}w_0(x,y)\psi(x,y)dxdy$$ for any $\psi\in L^2(Q;C_{per}(\mathcal{T}^N))$. Such a sequence $\{w^\ep\}$ is said to "two-scale" converge to $w_0(x,y)$.
%\end{thm}
%Here $\mathcal{T}^N$ denotes the $N$-dimensional torus and $C_{per}(\mathcal{T}^N)$ the set of continuous and periodic functions on $\R^N$. 
The notion of two scale convergence is useful  for the  homogenization of partial differential equation with periodically oscillating coefficients.  In order to threat equations with random coefficients,  in \cite{bmw} Bourgeat et al. extend  this theory from the periodic to the stochastic setting, introducing the concept of  "stochastically two-scale" convergence in the mean. They prove the following:

\begin{thm}[\cite{bmw}, Theorem 3.4]\label{thm3.4} Let $(\Om, \mathcal{F}, \mu)$ be a probability space such that $L^2(\Om)$ is separable and let $\{T(y): y\in \R^N\}$ be a N-dynamical system. Let $Q$ be an open set of $\R^N$ and let $\{w^\ep\}$ be a bounded sequence in $L^2(Q\times\Om)$ . Then there exists a subsequence, still denoted by $\{w^\ep\}$, and a function 
$w_0\in  L^2(Q\times \Om)$ such that 
$$\lim_{\ep\rightarrow 0}\int_{Q\times\Om} w^\ep(x,\om)\psi\left(x,T\left(\frac{x}{\ep}\right)\om\right)dxd\mu=\int_{Q\times\Om}w_0(x,\om)\psi(x,\om)dxd\mu$$ for any  function $\psi$ such that $\psi(x,T(x)\om)$ defines an element of $L^2(Q\times \Om)$. Such a sequence $\{w^\ep\}$ is said to "stochastically two-scale" converge in the mean to $w_0(x,y)$.
\end{thm}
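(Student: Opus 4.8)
The plan is to imitate Allaire's compactness proof for periodic two-scale convergence, replacing the periodic unfolding by the measure-preserving structure of the dynamical system. For each admissible test function $\psi$ (that is, one for which $(x,\om)\mapsto\psi(x,T(x)\om)$ defines an element of $L^2(Q\times\Om)$) I set
\[L^\ep(\psi):=\int_{Q\times\Om}w^\ep(x,\om)\,\psi\!\left(x,T\!\left(\tfrac{x}{\ep}\right)\om\right)dx\,d\mu.\]
The first, and decisive, observation is that the two-scale test function is an isometric copy of $\psi$ in the mean: for each fixed $x$ the invariance property (ii) of the dynamical system gives $\int_\Om|\psi(x,T(x/\ep)\om)|^2d\mu=\int_\Om|\psi(x,\om)|^2d\mu$, since $T(x/\ep)$ preserves $\mu$; integrating over $Q$ yields $\|\psi(\cdot,T(\cdot/\ep)\cdot)\|_{L^2(Q\times\Om)}=\|\psi\|_{L^2(Q\times\Om)}$, independently of $\ep$. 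The joint measurability of $(x,\om)\mapsto\psi(x,T(x/\ep)\om)$ is exactly what the measurability axiom (iii) guarantees, and the notion of admissibility is tailored so that this object genuinely lies in $L^2$.

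Combining the isometry with the Cauchy--Schwarz inequality and the uniform bound $\|w^\ep\|_{L^2(Q\times\Om)}\le M$ gives $|L^\ep(\psi)|\le M\|\psi\|_{L^2(Q\times\Om)}$ for every $\ep$, so $\{L^\ep\}$ is an equibounded family of linear functionals. Here I would invoke the separability hypothesis on $L^2(\Om)$: it makes $L^2(Q\times\Om)$ separable, so I can fix a countable family $\{\psi_k\}$ of admissible test functions that is dense in $L^2(Q\times\Om)$ (for instance finite sums $\sum_j\phi_j(x)\chi_j(\om)$ with $\phi_j\in C_c(Q)$ and $\chi_j$ in a countable dense subset of $L^2(\Om)$ whose realizations are well behaved). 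For each $k$ the scalar sequence $\{L^\ep(\psi_k)\}$ is bounded, so a diagonal extraction produces a single subsequence $\ep\to0$ along which $L^\ep(\psi_k)$ converges, for every $k$, to a limit $L(\psi_k)$.

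By the uniform bound $|L(\psi_k)|\le M\|\psi_k\|_{L^2(Q\times\Om)}$ together with density, $L$ extends uniquely to a bounded linear functional on all of $L^2(Q\times\Om)$, and the Riesz representation theorem then furnishes $w_0\in L^2(Q\times\Om)$ with $L(\psi)=\int_{Q\times\Om}w_0\psi\,dx\,d\mu$. It remains to upgrade convergence from the dense family to an arbitrary admissible $\psi$: given such a $\psi$ and $\eta>0$, I pick $\psi_k$ with $\|\psi-\psi_k\|_{L^2(Q\times\Om)}<\eta$ and split through $\psi_k$, using $|L^\ep(\psi)-L^\ep(\psi_k)|\le M\eta$, the already established convergence $L^\ep(\psi_k)\to\int_{Q\times\Om}w_0\psi_k\,dx\,d\mu$, and $|\int_{Q\times\Om}w_0(\psi-\psi_k)\,dx\,d\mu|\le\|w_0\|_{L^2(Q\times\Om)}\,\eta$; letting $\ep\to0$ and then $\eta\to0$ yields the claimed stochastically two-scale convergence in the mean.

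The hard part is not the functional-analytic skeleton, which is routine once the isometry is in hand, but the two measurability and density points: verifying that $(x,\om)\mapsto\psi(x,T(x/\ep)\om)$ is genuinely jointly measurable and square-integrable (so that $L^\ep$ is well defined and the isometry is legitimate), and exhibiting a countable family of such admissible test functions that is dense in $L^2(Q\times\Om)$, so that the Riesz representative $w_0$ is recovered on the whole space rather than on a proper subspace. This is precisely where the separability of $L^2(\Om)$ and axiom (iii) of the dynamical system are used in an essential way.
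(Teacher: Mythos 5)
This theorem is not proved in the paper at all: it is quoted from Bourgeat--Mikelic--Wright \cite{bmw} (the paper only records the statement and a remark on admissible test functions), so there is no internal proof to compare against. Your argument --- exact $L^2$ isometry of the oscillating test functions via measure invariance, uniform boundedness of the functionals $L^\ep$, diagonal extraction over a countable dense family of admissible test functions (products $\phi(x)\chi(\om)$, available precisely because $L^2(\Om)$ is separable), Riesz representation, and a density upgrade to arbitrary admissible $\psi$ --- is correct modulo the measurability points you yourself flag, and it is essentially the proof given in \cite{bmw}.
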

\begin{rem}
Not for every element  $\psi\in L^2(Q\times \Om)$,  $\psi(x,T(x)\om)$ defines an element of $L^2(Q\times \Om)$ but  if for example $\psi(x,\om)=g(x)h(\om)$ 
with
$g\in L^2(Q)$ and $h\in L^2(\Om)$, then $(x,\om)\to \psi(x,T(x)\om)$ belongs to   $L^2(Q\times \Om)$, see \cite{bmw}.
\end{rem}
We will need to apply the previous result to sequence of functions belonging to $L^\infty(Q\times \Om)$.
With a minor modification of the proof given in \cite{bmw}, the concept of "stochastically two-scale" convergence can be extended to  $L^\infty$ functions.
%For convenience's reader, we will prove the following result:

\begin{prop}\label{propstocconv} Under the same assumptions of Theorem \ref{thm3.4}, 
% Let $(\Om, \mathcal{F}, \mu)$ be a probability space such that $L^2(\Om)$ is separable and let $\{T(y): y\in \R^N\}$ be a N-dynamical system. Let $Q$ be an open set of $\R^N$ and 
let $\{w^\ep\}$ be a bounded sequence in $L^\infty(Q\times\Om)$. Then there exists a subsequence, still denoted by $\{w^\ep\}$, and a function 
$w_0\in  L^\infty(Q\times \Om)$ such that 
$$\lim_{\ep\rightarrow 0}\int_{Q\times\Om} w^\ep(x,\om)\psi\left(x,T\left(\frac{x}{\ep}\right)\om\right)dxd\mu=\int_{Q\times\Om}w_0(x,\om)\psi(x,\om)dxd\mu$$ for any  function $\psi$ such that $\psi(x,T(x)\om)$ defines an element of $L^1(Q\times \Om)$.
\end{prop}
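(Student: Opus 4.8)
The plan is to bootstrap from Theorem \ref{thm3.4} rather than reprove it. Set $M:=\sup_\ep\|w^\ep\|_{L^\infty(Q\times\Om)}<\infty$. Since $\mu$ is a probability measure, on any subset of $Q$ of finite Lebesgue measure the bound $M$ forces $\{w^\ep\}$ to be bounded in $L^2$, so the $L^2$ theory applies there. If $|Q|<\infty$ this is immediate; for a general open $Q$ I would exhaust it by the sets $Q_n:=Q\cap B(0,n)$, apply Theorem \ref{thm3.4} on each $Q_n\times\Om$, and extract a single subsequence by a diagonal procedure. Uniqueness of the stochastic two-scale limit (which holds because the admissible test functions, e.g.\ the products $g(x)h(\om)$, span a dense subspace of $L^2$) guarantees that the limits on successive $Q_n$ are consistent restrictions, producing one $w_0\in L^2_{loc}(Q\times\Om)$ together with the limit identity for every $\psi$ with support compact in $x$ and $\psi(x,T(x)\om)\in L^2(Q\times\Om)$.

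The key point, and the place where the probabilistic structure enters, is the invariance of $\mu$ under $\{T(y)\}$: for any measurable $F$ on $Q\times\Om$ and any $\ep>0$,
\[
\int_{Q\times\Om}|F(x,T(x/\ep)\om)|\,dx\,d\mu=\int_{Q\times\Om}|F(x,\om)|\,dx\,d\mu,
\]
and the same with $|F|$ replaced by $|F|^2$, because for each fixed $x$ the map $T(x/\ep)$ preserves $\mu$. Thus realization preserves both the $L^1$ and the $L^2$ norms over $Q\times\Om$; measurability of $(x,\om)\mapsto F(x,T(x/\ep)\om)$ is exactly condition (iii) in the definition of the dynamical system.

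Using this, I would upgrade $w_0$ to $L^\infty$ and control its norm. For any admissible $\psi$ the limit identity and the bound $M$ give
\[
\left|\int_{Q\times\Om}w_0\,\psi\,dx\,d\mu\right|=\lim_{\ep\to0}\left|\int_{Q\times\Om}w^\ep\,\psi\!\left(x,T(x/\ep)\om\right)dx\,d\mu\right|\le M\,\|\psi\|_{L^1(Q\times\Om)},
\]
the last step using the norm-preservation above. Since the admissible $\psi$ include all products $g(x)h(\om)$ with $g\in C_c(Q)$ and $h\in L^2(\Om)$, whose linear span is dense in $L^1(Q\times\Om)$, the functional $\psi\mapsto\int w_0\,\psi$ extends to a bounded functional of norm $\le M$ on $L^1(Q\times\Om)$; by duality $w_0\in L^\infty(Q\times\Om)$ with $\|w_0\|_{L^\infty}\le M$.

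Finally I would extend the convergence from $L^2$- to $L^1$-admissible test functions by density. Given $\psi$ with $\psi(x,T(x)\om)\in L^1(Q\times\Om)$ (equivalently, by norm-preservation, $\psi\in L^1(Q\times\Om)$), truncate it to $\psi_k:=\psi\,\mathbf 1_{\{|\psi|\le k\}}\,\mathbf 1_{Q_k}$; then $\psi_k$ is bounded with support of finite measure, so $\psi_k\in L^2$, its realization $\psi_k(x,T(x)\om)\in L^2(Q\times\Om)$, and $\psi_k\to\psi$ in $L^1(Q\times\Om)$ by dominated convergence. For each fixed $k$ the previous step supplies the limit, while both
\[
\sup_{\ep>0}\left|\int_{Q\times\Om}w^\ep\,(\psi-\psi_k)\!\left(x,T(x/\ep)\om\right)dx\,d\mu\right|\le M\,\|\psi-\psi_k\|_{L^1(Q\times\Om)}
\]
and the corresponding estimate for $w_0$ are small uniformly in $\ep$, so a standard approximation argument yields the limit for $\psi$. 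The only genuine obstacle is the bookkeeping for unbounded $Q$, namely the exhaustion/diagonal step and checking that the two-scale limits glue; everything else is a duality-and-density routine resting on the measure-preservation identity.
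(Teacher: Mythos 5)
Your proof is correct, but it takes a genuinely different route from the paper's. The paper offers no self-contained argument for this proposition: it only remarks that the $L^\infty$ version follows ``with a minor modification of the proof given in \cite{bmw}'', i.e.\ the reader is expected to rerun the Bourgeat--Mikelic--Wright compactness construction with the dual pairing $(L^1,L^\infty)$ in place of $(L^2,L^2)$: the functionals $\psi\mapsto\int_{Q\times\Om} w^\ep(x,\om)\,\psi\left(x,T\left(\frac{x}{\ep}\right)\om\right)dx\,d\mu$ are uniformly bounded against the $L^1$-norm of the realization, and a weak-$*$ limit in $L^\infty=(L^1)^*$ is extracted using separability. You instead treat Theorem \ref{thm3.4} as a black box and bootstrap from its statement: exhaustion of $Q$ by the finite-measure sets $Q_n$ plus a diagonal extraction yields an $L^2_{loc}$ two-scale limit; the measure-preservation identity (your justification via invariance of $\mu$ for fixed $x$, Fubini, and the measurability condition (iii) is exactly right, and this identity is indeed the structural fact underlying the whole theory in \cite{bmw}) makes realization an isometry of $L^1(Q\times\Om)$; duality over the dense span of products $g(x)h(\om)$ upgrades $w_0$ to $L^\infty$ with norm at most $M$; and truncation together with the uniform error bound $M\|\psi-\psi_k\|_{L^1}$ extends the convergence to all $L^1$-admissible test functions. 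Your route buys independence from the internals of \cite{bmw}, at the price of bookkeeping you partly gloss over: the consistency of the limits on successive $Q_n$ and the identification of the $L^\infty$ representative furnished by duality with the $L^2_{loc}$ limit $w_0$ both rest on the density of the span of products and on the separability of $L^2(\Om)$ (equivalently, a countably generated measure algebra), and these are the steps a referee would ask you to write out in full. The paper's implicit route buys brevity: the same $L^1$-isometry observation shows that the proof in \cite{bmw} runs essentially verbatim with the new pairing.
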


\subsection{Ergodic algebras with mean value.}\label{ergodicalgebras}
In this subsection we recall the Bohr compactification of the set of almost periodic function on $\R^N$ and more in general of ergodic algebras with mean value. 
We will then 
present the two-scale convergence result proven \cite{Casado}.

The set of almost periodic functions on $\R^N$, here denoted by $AP(\R^N)$,  is a linear subspace of the space of bounded uniformly continuous functions on $\R^N$, that forms an algebra with mean value. This means that  $AP(\R^N)$ satisfies the following conditions:
\begin{itemize}
\item[a)] if $f,\,g\in AP(\R^N)$ then $fg\in AP(\R^N)$;
\item[b)]  $AP(\R^N)$ with the uniform convergence topology is complete;
\item[c)] the constant functions belong to $AP(\R^N)$;
\item[d)] $AP(\R^N)$ is invariant under the translations $\tau_y:\R^N\to\R^N$, $\tau_y(x)=x+y$, $y\in\R^N$, that is if $f\in AP(\R^N)$ then $f(\tau_y(\cdot))\in AP(\R^N)$; 
\item[e)]  any element $f\in AP(\R^N)$ possesses a mean value, that is there exists a number $M(f)$ such that 
$$M(f)=\lim_{\ep\to0}\frac{1}{|A|}\int_A f\left(\frac x\ep\right)\,dx$$
for any Lebesgue measurable bounded set $A\subset \R^N$.
\end{itemize}
The Besicovitch space of order $p$, with $1\le p<+\infty$, denoted by $B^p$ is defined as the closure of $ AP(\R^N)$ for the seminorm
$$[f]_p:=\left(M(|f|^p)\right)^\frac{1}{p}.$$
The Besicovitch space of order $\infty$,  $B^\infty$, is defined by
$$B^\infty:=\{f\in B^1\,|\,[f]_\infty:=\sup_{p\ge 1}[f]_p<+\infty\}.$$
The spaces $B^p$ are seminormed spaces. The quotient of $B^p$ with the kernel of $[\cdot]_p$, denoted by $\mathcal{B}^p$, is a normed space.
It is well known, see 
\cite{ds} and \cite{af}, that  there exists a compact space $\mathbb{G}^N$,  called Bohr compactification of $AP(\R^N)$, and an isometric isomorphism $i:AP(\R^N)\to C(\mathbb{G}^N)$ identifying 
$AP(\R^N)$ with the algebra $C(\mathbb{G}^N)$ of the continuous functions on $\mathbb{G}^N$. If $\mathfrak{m}$ is the Haar measure on $\mathbb{G}^N$ normalized to be a probability measure, then 
$$\int_{\mathbb{G}^N}f\,d\mathfrak{m}=M(f).$$
Moreover,  the translations $\tau_y$ induce a family of homeomorphisms $T(y):\mathbb{G}^N\to \mathbb{G}^N$, $y\in\R^N$, which is an ergodic continuous $N$-dimensional dynamical system
on $(\mathbb{G}^N, \mathcal{G},\mathfrak{m})$, with $ \mathcal{G}$ the $\sigma$-algebra of Borel sets on $\mathbb{G}^N$. Finally, the space $\mathcal{B}^p$, $1\le p\le+\infty$,
is isometrically isomorphic to $L^p(\mathbb{G}^N,\mathfrak{m})$.

More in general, if $\mathcal{A}$ is an algebra with mean value, i.e., satisfies (a)-(e) and $\mathcal{B}^p$ are the generalized Besicovitch spaces associated to 
$\mathcal{A}$, then
\begin{thm}[\cite{afs}, Theorem 4.1]\label{ambrosias}The following holds:
\begin{itemize}
\item[i)]  There exist a compact space $K$ and an isometric isomorphism $i$ identifying $\mathcal{A}$ with the algebra $C(K)$ of continuous functions on $K$.
\item[ii)] The translations $\tau_y$ induce a family of homeomorphisms $T(y):K\to K$, $y\in\R^N$, which is a continuous $N$-dimensional dynamical system.
\item[iii)]The mean value on  $\mathcal{A}$ extends to a Radon probability measure $\mathfrak{m}$ on $K$ defined by, for $f\in\mathcal{A}$,
$$\int_{K}f\,d\mathfrak{m}=M(f),$$
 which is invariant by the group of homeomorphisms $T(y)$.
 \item[iv)] For  $1\le p\le+\infty$, the Besicovitch space  $\mathcal{B}^p$ is isometrically isomorphic to $L^p(K,\mathfrak{m})$.
\end{itemize}
\end{thm}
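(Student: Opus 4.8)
The plan is to realize $K$ as the Gelfand spectrum of $\mathcal{A}$ and to deduce all four statements from the Gelfand representation theory together with the translation structure and the properties of the mean value $M$. For part (i), I would first note that properties (a)--(c) make $(\mathcal{A},\|\cdot\|_\infty)$ a commutative unital Banach algebra, realized as a uniformly closed subalgebra of the bounded continuous functions on $\R^N$: the unit is the constant function $1$, completeness is (b), and closure under products is (a). Let $K$ be the set of nonzero algebra homomorphisms $\chi:\mathcal{A}\to\R$, equipped with the weak-$*$ topology inherited from $\mathcal{A}^*$. By Banach--Alaoglu, $K$ is a weak-$*$ closed subset of the unit ball of $\mathcal{A}^*$, hence compact Hausdorff. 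The Gelfand transform $i(f)=\hat f$, with $\hat f(\chi):=\chi(f)$, is an algebra homomorphism into $C(K)$; since $\|f^n\|_\infty=\|f\|_\infty^n$, the spectral radius of $f$ equals $\|f\|_\infty$, so $i$ is isometric, and it is onto by Stone--Weierstrass, because $i(\mathcal{A})$ separates the points of $K$ by the very definition of characters, contains the constants, and is uniformly closed. This establishes (i).

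For part (ii), the translations act on $\mathcal{A}$ by $(\sigma_y f)(x):=f(x+y)$; property (d) guarantees that $\sigma_y$ maps $\mathcal{A}$ into itself, and each $\sigma_y$ is an isometric algebra automorphism. Dualizing, I define $T(y):K\to K$ by $T(y)\chi:=\chi\circ\sigma_y$. Functoriality of this construction gives $T(0)=\mathrm{id}$ and $T(y+z)=T(y)T(z)$, so each $T(y)$ is a homeomorphism with inverse $T(-y)$, and the group property of the dynamical system holds. The continuity of the dynamical system follows from the uniform continuity of the elements of $\mathcal{A}$: for each $f\in\mathcal{A}$ the map $y\mapsto\sigma_y f$ is norm-continuous into $\mathcal{A}$, whence $y\mapsto\widehat{\sigma_y f}=\hat f\circ T(y)$ is continuous into $C(K)$, and from this one deduces the joint continuity of $(y,\chi)\mapsto T(y)\chi$ and the attendant measurability requirement.

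For part (iii), properties (c) and (e) show that $M$ is a unital positive linear functional on $\mathcal{A}$, positivity following from the fact that $M(f)$ is a limit of averages of the nonnegative quantity $f$ when $f\ge 0$. Transported through $i$, it becomes a unital positive functional on $C(K)$, so the Riesz representation theorem yields a unique Radon probability measure $\mathfrak{m}$ on $K$ with $\int_K \hat f\,d\mathfrak{m}=M(f)$. The translation invariance of the mean, $M(\sigma_y f)=M(f)$, then reads $\int_K \hat f\circ T(y)\,d\mathfrak{m}=\int_K\hat f\,d\mathfrak{m}$, i.e. $\mathfrak{m}$ is $T(y)$-invariant. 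For part (iv), under the identifications $i$ and $\mathfrak{m}$ one computes $[f]_p^p=M(|f|^p)=\int_K|\hat f|^p\,d\mathfrak{m}=\|\hat f\|_{L^p(K,\mathfrak{m})}^p$, so $i$ is an isometry from $(\mathcal{A},[\cdot]_p)$ into $L^p(K,\mathfrak{m})$; since $i(\mathcal{A})=C(K)$ is dense in $L^p(K,\mathfrak{m})$ for $1\le p<\infty$, passing to completions gives the isometric isomorphism $\mathcal{B}^p\cong L^p(K,\mathfrak{m})$, and the endpoint $p=\infty$ is treated directly through the sup-norm structure of $C(K)$.

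I expect the main obstacle to lie in part (ii): upgrading the norm-continuity of the translation action on $\mathcal{A}$ to the joint continuity of $(y,\chi)\mapsto T(y)\chi$ on $\R^N\times K$ and to the Lebesgue--measurability condition required of an $N$-dimensional dynamical system. A secondary delicate point is the $p=\infty$ case of (iv), where density of $C(K)$ in $L^\infty(K,\mathfrak{m})$ fails and the isometric identification $\mathcal{B}^\infty\cong L^\infty(K,\mathfrak{m})$ must be argued without invoking completion in the $L^p$ norm.
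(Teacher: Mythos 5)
This theorem is quoted in the paper from \cite{afs} (Theorem 4.1) without proof, and your Gelfand-spectrum construction --- realizing $K$ as the character space of $\mathcal{A}$ (weak-$*$ compact since characters are unital, multiplicative and of norm at most one), obtaining the isometric isomorphism onto $C(K)$ via isometry of the Gelfand transform plus Stone--Weierstrass, dualizing the translation action $\sigma_y$ to get the homeomorphisms $T(y)$ with joint continuity from the norm-continuity of $y\mapsto\sigma_y f$ on $BUC(\R^N)$, representing the positive unital functional $M$ by a Radon probability measure via Riesz, and identifying $\mathcal{B}^p$ with $L^p(K,\mathfrak{m})$ by density of $C(K)$ in $L^p$ for $p<\infty$ --- is precisely the argument of the cited reference, so your proposal is correct and takes essentially the same route. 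The only loose ends, both minor and both of which you already flag, are that the translation invariance $M(\sigma_y f)=M(f)$ should be verified from the definition of the mean (it follows since $\frac{1}{|A|}\int_A f\left(\frac{x}{\epsilon}+y\right)dx=\frac{1}{|A|}\int_{A+\epsilon y} f\left(\frac{x}{\epsilon}\right)dx$ and $|A\,\triangle\,(A+\epsilon y)|\to 0$ as $\epsilon\to 0$), and that the endpoint $p=\infty$ is best handled not through the sup-norm structure of $C(K)$ (which is indeed not dense in $L^\infty$) but through the paper's definition $\mathcal{B}^\infty\subset\mathcal{B}^1\cong L^1(K,\mathfrak{m})$ together with the fact that $\sup_{p\ge 1}\|F\|_{L^p(K,\mathfrak{m})}=\|F\|_{L^\infty(K,\mathfrak{m})}$ because $\mathfrak{m}$ is a probability measure.
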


An algebra with mean value  is called ergodic if any function  belonging to
 $\mathcal{B}^2$  and  invariant with respect to $\tau_y$ is equivalent (in $\mathcal{B}^2$) to a constant.  In this case the $N$-dynamical system given in Theorem \ref{ambrosias} is ergodic. 
Lemma \ref{lemgrad0const} for ergodic algebras with mean value is proven in \cite{afs} (see Lemma 3.2). 

Thanks to the Theorem \ref{ambrosias}, equation
$$\partial_t u=\Delta f\left(\frac{x}{\ep},u\right)$$ when $f(\cdot, u)$ is almost periodic or more in general belongs to a linear algebra with mean value, can be written as in \eqref{uepeq} by setting
$$\Om:=K,\quad \mu:=\mathfrak{m}$$ and 
$T(y)$, $y\in\R^N$, the $N$-dynamical system induced on  $K$ by the translations $\tau_y$ of $\R^N$.
 However, we cannot apply Theorem \ref{thm3.4}  and Proposition \ref{propstocconv}  in this framework as the Besicovitch spaces are in general not separable.
 In  \cite{Casado} the authors were able to overcome this difficulty and extend the theory of two-scale convergence to generalized  Besicovitch spaces, see Definition 4.1 there for the notion of two-scale convergence in this setting. 
 %, by using the following abstract theorem which generalizes the Banach-Alaoglu theorem in reflexive spaces.
 %\begin{thm}[\cite{Casado}, Theorem 2.1]\label{abstract} Let $X$ be a subspace (not necessary closed) of a reflexive space $Y$ and let $F_n:X\to\R$ be a sequence of (not necessary  continuous) linear functional satisfying, for some $C>0$, 
% $$\limsup_{n\to+\infty}F_n(x)\leq C\|x\|,\quad\text{for all }x\in X.$$
% Then, there exists a subsequence $\{F_{n_k}\}$ and a linear functional $F:Y\to\R$ such that 
 %$$\lim_{k\to+\infty}F_{n_k}(x)=F(x),\quad\text{for all }x\in X.$$
   %\end{thm}
   %Notice that if $X$ is complete and $F_n$ is continuous, then  Theorem \ref{abstract}  follows by the Banach-Steinhauss theorem and the 
  % Banach-Alaoglu theorem.
   \begin{thm}[\cite{Casado}, Theorem 4.10]
 Let $Q$ be an open set of $\R^N$ and let $\{w^\ep\}$ be a bounded sequence in $L^p(Q\times\Om)$, $1<p\leq \infty$. Then there exists a subsequence, still denoted by $\{w^\ep\}$, and a function 
$w_0\in  L^p(Q;\mathcal{B}^p)$ such that $\{w^\ep\}$ two-scale converges to $w_0$. 
%$$\lim_{\ep\rightarrow 0}\int_{Q\times\Om} w^\ep(x,\om)\psi\left(x,T\left(\frac{x}{\ep}\right)\om\right)dxd\mu=\int_{Q\times\Om}w_0(x,\om)\psi(x,\om)dxd\mu$$ for any  function $\psi$ such that $\psi\in L^{p'}(Q)\otimes L^{p'}(\Om)$. 
\end{thm}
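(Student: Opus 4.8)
The plan is to adapt Allaire's functional-analytic proof of two-scale compactness to the Besicovitch setting, using the identification $\mathcal{B}^p\cong L^p(K,\mathfrak{m})$ of Theorem \ref{ambrosias}(iv). The essential simplification, compared with the periodic or algebra versions of such theorems, is that the convergence here is taken ``in the mean'', i.e.\ after integrating over $\Om$; consequently the ergodic-averaging step that is usually the technical heart of the argument is replaced by the exact $T(\cdot)$-invariance of $\mathfrak{m}$.

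First I would set up the dual functionals. For $1<p<\infty$ put $p'=p/(p-1)$ and consider the admissible test functions of the form $\psi(x,\om)=\sum_{k}g_k(x)h_k(\om)$ with $g_k\in L^{p'}(Q)$ and $h_k\in\mathcal{A}=C(K)$; by the Remark following Theorem \ref{thm3.4} the realization $(x,\om)\mapsto\psi(x,T(x/\ep)\om)$ is a well-defined element of $L^{p'}(Q\times\Om)$, and since $\mathcal{A}$ is dense in $\mathcal{B}^{p'}\cong L^{p'}(K,\mathfrak{m})$ such $\psi$ are dense in $L^{p'}(Q;\mathcal{B}^{p'})$. Define
\[
\Phi_\ep(\psi)=\int_{Q\times\Om} w^\ep(x,\om)\,\psi\!\left(x,T\!\left(\tfrac{x}{\ep}\right)\om\right)dx\,d\mu .
\]
Since $T(x/\ep)$ preserves $\mathfrak{m}$ for each fixed $x$, one has $\int_\Om|\psi(x,T(x/\ep)\om)|^{p'}d\mu=\int_\Om|\psi(x,\om)|^{p'}d\mu$ for a.e.\ $x$, so the $L^{p'}(Q\times\Om)$-norm of the realization equals $\|\psi\|_{L^{p'}(Q;\mathcal{B}^{p'})}$. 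By H\"older this gives
\[
|\Phi_\ep(\psi)|\le \|w^\ep\|_{L^p(Q\times\Om)}\,\|\psi\|_{L^{p'}(Q;\mathcal{B}^{p'})}\le C\,\|\psi\|_{L^{p'}(Q;\mathcal{B}^{p'})},
\]
uniformly in $\ep$. Hence $\Phi_\ep$ extends to a bounded linear functional on $L^{p'}(Q;\mathcal{B}^{p'})$ (the bound shows it descends to the quotient norm, so it is well defined); since $\mathcal{B}^p$ is reflexive, the Bochner duality yields $(L^{p'}(Q;\mathcal{B}^{p'}))^*=L^p(Q;\mathcal{B}^p)$, and $\Phi_\ep$ is represented by some $W^\ep\in L^p(Q;\mathcal{B}^p)$ with $\|W^\ep\|_{L^p(Q;\mathcal{B}^p)}\le C$.

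Next I would extract the limit. For $1<p<\infty$ the space $L^p(Q;\mathcal{B}^p)$ is reflexive, so by reflexivity and the Eberlein--\v{S}mulian theorem a subsequence satisfies $W^\ep\rightharpoonup w_0$ weakly; testing against admissible $\psi$ then gives $\Phi_\ep(\psi)\to\int_{Q\times\Om}w_0\,\psi\,dx\,d\mu$, which is exactly two-scale convergence in the mean to $w_0$. For $p=\infty$ I would note that $\{w^\ep\}$ is bounded in every $L^q(Q\times\Om)$ with $q<\infty$; running the previous step along a sequence $q_n\to\infty$ with nested subsequences and diagonalizing produces a single $w_0$ that is the two-scale limit at each level $q_n$ (uniqueness of the limit, tested against functions admissible for all levels, forces consistency of the representatives). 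The uniform bound $\sup_n\|w_0\|_{L^{q_n}(Q;\mathcal{B}^{q_n})}\le C$, together with the characterization $[\cdot]_\infty=\sup_{q}[\cdot]_q$ defining $\mathcal{B}^\infty$, then places $w_0\in L^\infty(Q;\mathcal{B}^\infty)$ with the expected bound.

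The main obstacle is that the Besicovitch spaces $\mathcal{B}^p\cong L^p(K,\mathfrak{m})$ are in general non-separable, which is precisely why Theorem \ref{thm3.4} does not apply and why a naive weak-$*$ sequential compactness argument is unavailable. This is circumvented by \emph{reflexivity} for $1<p<\infty$, which gives weak sequential compactness with no separability hypothesis, and by the $\sup_q$ truncation/diagonalization argument for $p=\infty$. The secondary points to verify carefully are the Bochner-space duality $(L^{p'}(Q;\mathcal{B}^{p'}))^*=L^p(Q;\mathcal{B}^p)$ and the density of the tensor-product test functions in $L^{p'}(Q;\mathcal{B}^{p'})$, both of which reduce, via the isometric identification $\mathcal{B}^p\cong L^p(K,\mathfrak{m})$ of Theorem \ref{ambrosias}, to standard facts about Lebesgue--Bochner spaces of reflexive targets.
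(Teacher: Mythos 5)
First, something you could not have known: the paper contains no proof of this statement at all --- it is quoted (with notation adapted) from \cite{Casado}, Theorem 4.10, as an external citation --- so there is no ``paper's proof'' to compare against, and I judge your argument on its own terms.

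Read as transcribed here (a sequence bounded in $L^p(Q\times\Om)$, paired against realizations and integrated in $d\mu$, i.e.\ ``in the mean'' with $\Om=K$, $\mu=\mathfrak{m}$), your argument for $1<p<\infty$ is essentially sound: exact $T$-invariance of $\mathfrak{m}$ makes the realization map an isometry on Besicovitch classes, so $\Phi_\ep$ descends with a uniform bound to $L^{p'}(Q;\mathcal{B}^{p'})$, and since this space is isometric to $L^{p'}(Q\times K, dx\otimes\mathfrak{m})$ you do not even need vector-valued duality or the Radon--Nikodym property --- scalar $L^{p'}$--$L^p$ duality on the product measure space plus reflexivity of $L^p(Q\times K)$ and Eberlein--\v{S}mulian give the weakly convergent subsequence with no separability hypothesis. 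That substitution of reflexivity for separability is indeed the right mechanism for $1<p<\infty$. However, there are two genuine gaps. (a) Your case $p=\infty$ starts from the claim that $\{w^\ep\}$ is bounded in every $L^q(Q\times\Om)$, $q<\infty$; this is false when $|Q|=\infty$ (and $Q$ is an arbitrary open set, e.g.\ $\R^N$, which is exactly the situation needed in the paper). You must first exhaust $Q$ by bounded open sets $Q_n$, run the finite-exponent argument on each $Q_n\times\Om$, diagonalize in $n$, and transfer the uniform $L^\infty$ bound to the glued limit by testing against $L^1$ realizations; the step as written is wrong, though the repair is routine. (b) More substantively, the result actually proved in \cite{Casado} --- and explicitly invoked through their Definition 4.1 --- concerns the \emph{deterministic} notion of two-scale convergence: the sequence is bounded in $L^p(Q)$ and is paired against the individual realization $x\mapsto\psi(x,x/\ep)$, with no integration in $\om$. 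There your ``essential simplification'' disappears: for fixed $\ep$, $\|\psi(\cdot,\cdot/\ep)\|_{L^{p'}(Q)}$ is not controlled by the Besicovitch norm (a test function with $[\psi]_{p'}=0$ need not vanish on the diagonal), so $\Phi_\ep$ does \emph{not} descend to the quotient $L^{p'}(Q;\mathcal{B}^{p'})$; only the asymptotic mean-value identity $\lim_{\ep\to0}\|\psi(\cdot,\cdot/\ep)\|_{L^{p'}(Q)}=\|\psi\|_{L^{p'}(Q;\mathcal{B}^{p'})}$ survives, and your scheme collapses at its first step. Overcoming the simultaneous failure of separability and of exact invariance is precisely the content of \cite{Casado}. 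In short: what you have proved (modulo the $p=\infty$ repair) is a compact-$\Om$ analogue of Theorem \ref{thm3.4} and Proposition \ref{propstocconv} in which reflexivity replaces the separability of $L^2(\Om)$ --- a worthwhile observation, and arguably sufficient for the way this paper uses two-scale arguments in the mean --- but it is not a proof of the deterministic compactness theorem cited from \cite{Casado}.
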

%\begin{proof}
%Assume first $1<p<+\infty$. Let $F_\ep:L^{p'}(Q)\otimes L^{p'}(\Om)\subset L^{p'}(Q\times\Om)\to \R$ be defined by 
%$$F_\ep(\psi):=\int_{Q\times\Om}w_\ep(x)\psi\left(x,T\left(\frac{x}{\ep}\right)\om\right)\,dx\,d\mu.$$
%By the Schwartz's inequality and the invariance of $T$ with respect to $\mu$,
%$$|F_\ep(\psi)|\leq C\left(\int_{Q\times\Om}\left|\psi\left(x,T\left(\frac{x}{\ep}\right)\om\right)\right|^{p'}\,dx\,d\mu\right)^\frac{1}{p'}=C\|\psi\|_{L^{p'}(Q\times\Om)}.$$
%Since $L^{p'}(\Om)$ is reflexive for $1<p<+\infty$, by Theorem \ref{abstract} and the Riesz representation theorem there exists a subsequence, still denoted by $\{F_\ep\}$ such that 
%$$\lim_{\ep\to0} F_\ep(\psi)=\int_{\Om\times K}w_0(x,y)\psi(x,y)\,dx\,d\mu.$$
%If $p=+\infty$ then

%\end{proof}
%%%%%%%%%%%%%%%%%%%%%%%%%%%%%%%%%%%%%%%%%%%%%%%%%
%%%%%%%%%%%%%%%%%%%%%%%%%%%%%%%%%%%%%%%%%%%%%%%%

\section{The Kinetic formulation}\label{kineticformulation}
In this section we derive a kinetic formulation for the heterogeneous porous-medium equation  \eqref{uepeq} that will be used in the proof of Theorem \ref{mainthm}. Let us start by recalling some classical results.
\subsection{Homogeneous porous-medium type equations}
The notion of kinetic solutions  for hyperbolic homogeneous conservations laws has been introduced by Lions, Perthame and Tadmor  \cite{lpt},  and then extended by Chen and Perthame  \cite{cp} to parabolic laws  that include, as special case, the homogeneous isotropic porous-medium equation 
\beq\label{poroueqhom}\p_t u-\Delta f(u)=0\eeq
(see also \cite{p}).
The formulation can be derived from the Kruzkhov's inequalities. Formally, if we multiply the equation by $S'(u)$, where $S$ is a $C^2$ function, we find the following equation 
$$\p_t(S(u))-\text{div}(f'(u)\nabla (S(u)))=-S''(u)f'(u)|\nabla u|^2.$$
The choice $S(u)=(u-v)_+$  for any $v\in\R$ as a limiting case of $C^2$ functions, gives the entropy inequalities
$$\p_t(u(t,x)-v)_+-\Delta(f(u(t,x))-f(v))_+=-m$$ with
$$m(t,x,v):=\delta_{u}(v)f'(u)|\nabla u|^2$$ where $\delta_{u}(v)$ is the Dirac masse at $v=u$. 
Differentiating with respect to  $v$ the previous equation, we find 

\beq\label{paraconseqnoxkin}\p_t \chi_+-f'(v)\Delta \chi_+=\frac{\p}{\p v} m\eeq
where
$$\chi_+(t,x,v):={\bf 1}_{\{u(t,x)>v\}},$$  and ${\bf 1}_A$ denotes the indicator function of the set $A$. 
The same kind of equation holds for $\chi_-(t,x,v):={\bf 1}_{\{u(t,x)<v\}}$:
$$\p_t \chi_--f'(v)\Delta\chi_-=-\frac{\p}{\p v} m.$$ The function that occurs in the kinetic formulation is the function $\chi:\R^2\rightarrow\{-1,0,1\}$ defined by
\beqs 
\chi(v,u)=\begin{cases}
1&\text{for}\quad 0<v<u,\\
-1&\text{for}\quad u<v<0,\\
0&\text{otherwise}.
\end{cases}
\eeqs
Since \eqref{paraconseqnoxkin} is linear, we see, at least formally, that the function 
\beqs \chi(t,x,v):=\chi(v,u(t,x))={\bf 1}_{v>0}\chi_+(t,x,v)-{\bf 1}_{v<0}\chi_-(t,x,v)\eeqs is still solution of \eqref{paraconseqnoxkin}.
We are now ready to give the definition of kinetic solution for \eqref{poroueqhom} with initial condition 
\beq\label{initialhom}u(0,x)=u_0(x)\in\L^1(\R^N).\eeq

\begin{de}[Definition 2.2, \cite{cp}]\label{kindefhom} A kinetic solution of \eqref{poroueqhom}, \eqref{initialhom}  is a function $u\in L^\infty([0,\infty);L^1(\R^N))$ such that 
\begin{itemize}
\item[(i)] For any $\xi\in C_c^\infty(\R)$, $\xi(u)f'(u)|\nabla u|^2\in L^1([0,\infty)\times\R^N)$;
\item[(ii)] $\chi(t,x,v)=\chi(v,u(t,x))$ satisfies  \eqref{paraconseqnoxkin}   in the sense of distributions in $[0,+\infty)\times \R^N\times\R$, with initial data $\chi(0,x,v)=\chi(v,u_0(x))$;%$\mathcal{D}'(\R^+\times\R^N\times\R)$;
\item[(iii)] If $n$ is the positive measure on $\R$ defined by 
$$\int_{\R}\xi(v) dn(v):=\int_{\R^+\times\R^N\times\R}\xi(v)m(t,x,v)\,dt\,dx$$ for $\xi\in C_c(\R)$, 
then there exists $\eta\in L^\infty(\R)$ such that $\eta\rightarrow 0$ as $|v|\rightarrow\infty$ and
$$n\le \eta$$%\quad\text{in }\mathcal{D}'(\R).$$  
in the sense of distributions in $\R$
\end{itemize}
\end{de}
Remark that a new real-valued variable, denoted by $v$, has been added in the kinetic formulation in order to make sense to the derivative $\frac{\p}{\p v} m$.
In \cite{cp} is shown that the notion of kinetic solution is well posed in $L^1$:
\begin{thm}[\cite{cp}, Theorem 1.2] Assume $f\in W^{1,\infty}_{loc}(\R^N)$, $f'\geq 0$ in $\R$ and $u_0\in L^1(\R^N)$. Then, there exists a unique kinetic solution 
$u\in C([0,\infty);L^1(\R^N))$ for the Cauchy problem \eqref{poroueqhom}, \eqref{initialhom}.
\end{thm}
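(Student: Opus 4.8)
The plan is to treat existence and uniqueness separately, as in \cite{cp}. \emph{Existence} I would obtain by vanishing viscosity. Replace $f$ by the strictly increasing $f_\nu(u)=f(u)+\nu u$ and $u_0$ by a smooth, compactly supported approximation $u_0^\nu\to u_0$ in $L^1(\R^N)$; the regularized Cauchy problem $\p_t u^\nu-\Delta f_\nu(u^\nu)=0$ is now uniformly parabolic (since $f_\nu'\ge\nu>0$), so classical quasilinear parabolic theory provides a smooth solution $u^\nu$. The next step is to produce bounds uniform in $\nu$: an $L^\infty$ bound from the maximum principle, an $L^1$ bound (and an $L^1$-contraction at the regularized level) by testing against $\mathrm{sgn}(u^\nu)$-type functions, and, most importantly, a uniform bound on the dissipation measure $m^\nu(t,x,v)=\delta_{u^\nu}(v)\,(f'(u^\nu)+\nu)\,|\nabla u^\nu|^2$, obtained by multiplying the equation by $S'(u^\nu)$ for convex $S$ and integrating. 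These estimates guarantee that property (iii) of Definition \ref{kindefhom} holds in the limit with a $\nu$-independent $\eta$.

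The passage to the limit $\nu\to 0$ is where compactness enters. The functions $\chi^\nu(t,x,v)=\chi(v,u^\nu(t,x))$ are bounded in $L^\infty$ and solve $\p_t\chi^\nu-(f'(v)+\nu)\Delta\chi^\nu=\p_v m^\nu$, so they converge weakly-$*$ along a subsequence. To identify the weak limit as $\chi(v,u)$ for a genuine function $u$ one needs \emph{strong} $L^1_{loc}$ compactness of $u^\nu$, which I would extract from a velocity-averaging lemma adapted to the degenerate parabolic kinetic equation: because $f'\ge0$, the averages $\int\chi^\nu(t,x,v)\,\psi(v)\,dv$ gain regularity in $(t,x)$, and this upgrades to strong convergence of $u^\nu=\int\chi^\nu\,dv$. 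With strong convergence in hand, one passes to the limit in the kinetic equation and in the measure inequality, checks $u\in C([0,\infty);L^1(\R^N))$ from the equation itself, and verifies (i)--(iii).

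For \emph{uniqueness} (equivalently, the $L^1$-contraction) I would compare two kinetic solutions $u_1,u_2$ through the key algebraic identity: since $\chi_i:=\chi(v,u_i)$ take values in $\{-1,0,1\}$ and satisfy $\chi_1\chi_2\ge0$ pointwise, one has $|\chi_1-\chi_2|=|\chi_1|+|\chi_2|-2\chi_1\chi_2$, whence
\beqs
\|u_1(t)-u_2(t)\|_{L^1(\R^N)}=\int_{\R^N}\int_\R|\chi_1-\chi_2|\,dv\,dx .
\eeqs
The goal is to show this quantity is nonincreasing in $t$. After mollifying $\chi_1,\chi_2$ in all variables so that the products and the measures $m_1,m_2$ make sense, I would use the two kinetic equations to compute $\frac{d}{dt}\int\int\chi_1\chi_2$. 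The diffusion terms, integrated by parts in $x$, produce a cross term $\int\int f'(v)\,\nabla_x\chi_1\cdot\nabla_x\chi_2$, while the right-hand sides contribute $-\int\int(m_1\,\p_v\chi_2+m_2\,\p_v\chi_1)$; since $\p_v\chi_i$ concentrates on $v=u_i$ and $m_j$ on $v=u_j$, these are genuinely off-diagonal dissipation terms.

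The main obstacle lies exactly here: showing that, after removing the regularization, the diffusion cross term and the measure cross terms combine into a quantity of the correct sign, so that $\frac{d}{dt}\int\int|\chi_1-\chi_2|\le0$. This is the parabolic analogue of the entropy-dissipation comparison in Kruzhkov's doubling method, and it is delicate because each $\nabla_x\chi_i$ is only a measure and the two dissipation measures must be compared near the diagonal $v=u_1=u_2$. Property (iii) of Definition \ref{kindefhom}, which bounds the defect measures uniformly and forces their densities to vanish at infinity, is what legitimizes the limiting argument. Once the contraction is established, uniqueness is immediate, and existence together with the contraction confer on the solution map its $C([0,\infty);L^1(\R^N))$ regularity, completing the proof.
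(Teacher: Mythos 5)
Note first that the paper does not prove this theorem at all: it is quoted verbatim from \cite{cp} (Theorem 1.2 there), so your proposal has to be measured against Chen--Perthame's argument. Your overall architecture (vanishing viscosity for existence, an $L^1$-contraction at the kinetic level for uniqueness) is indeed theirs, but each half has a genuine gap at its load-bearing step. For existence, the compactness mechanism you invoke is wrong: you claim that ``because $f'\ge 0$, the averages $\int \chi^\nu(t,x,v)\psi(v)\,dv$ gain regularity''. Nonnegativity of $f'$ yields no regularization whatsoever. Velocity-averaging lemmas require a genuine non-degeneracy condition (something like $|\{v:\ f'(v)\le\delta\}|\to 0$ as $\delta\to 0$), which is not among the hypotheses: under $f'\ge 0$ alone, $f'$ may vanish on a whole interval of $v$'s, and there the kinetic equation degenerates to $\p_t\chi^\nu=\p_v m^\nu$, with no spatial smoothing at all. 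This is precisely why \cite{cp} does not use averaging: in the homogeneous setting the equation is translation invariant in $x$, so the $L^1$-contraction at the viscous level gives the uniform estimate $\|u^\nu(t,\cdot+h)-u^\nu(t,\cdot)\|_{L^1}\le\|u_0(\cdot+h)-u_0(\cdot)\|_{L^1}$, hence strong $L^1$ compactness by Riesz--Kolmogorov; alternatively, and this is the route emphasized in \cite{cp}, one runs the uniqueness/comparison argument on two approximate solutions $u^\nu,u^{\nu'}$ and shows the viscous family is Cauchy in $C([0,\infty);L^1(\R^N))$, so that no compactness lemma is needed at all.

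For uniqueness, you stop exactly at the decisive point. You correctly reduce matters to showing that the diffusion cross term $-2\int\!\!\int f'(v)\,\nabla_x\chi_1\cdot\nabla_x\chi_2$ and the dissipation cross terms $-\int\!\!\int (m_1\,\p_v\chi_2+m_2\,\p_v\chi_1)$ combine with a sign, and then you declare this ``delicate'' without resolving it. The missing idea, which is the heart of the proof in \cite{cp}, is the chain-rule (square-root) structure of the dissipation measure: setting $\beta(w)=\int_0^w\sqrt{f'(r)}\,dr$, one has $m_i=\delta(v-u_i)\,|\nabla\beta(u_i)|^2$ and $\nabla_x\chi_i=\delta(v-u_i)\nabla u_i$, so that on the support of $\delta(v-u_1)\delta(v-u_2)$ the identity $f'(v)\,\nabla u_1\cdot\nabla u_2=\nabla\beta(u_1)\cdot\nabla\beta(u_2)$ holds; the three cross terms then assemble into $-\int\!\!\int\delta(v-u_1)\delta(v-u_2)\,\bigl|\nabla\beta(u_1)-\nabla\beta(u_2)\bigr|^2\le 0$, a perfect square. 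It is this identity (together with the mollification argument that legitimizes products of Dirac masses, where items (i) and (iii) of Definition \ref{kindefhom} enter) that yields $\frac{d}{dt}\int\!\!\int|\chi_1-\chi_2|\,dv\,dx\le 0$ and hence the contraction; without it, your uniqueness section is a plan rather than a proof. In short: correct skeleton, but the two steps that carry all the weight --- strong compactness for existence and the sign identity for uniqueness --- are respectively based on a mechanism that fails under the stated hypotheses, and absent.
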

If the initial data belongs to $L^1(\R^N)\cap L^\infty(\R^N)$, then the notion of kinetic solution is equivalent to the one of entropy solution, see \cite{cp} for the definition of entropy solution and the proof of the equivalence result. However the former is more general than the latter since is well defined in the $L^1$-setting. 
The $L^1$-stability of $L^\infty$-entropy solutions of porous-medium type equations was already proven by Carrillo \cite{c}. In the paper is also shown that if $f$ has continuous inverse, then any weak solution of \eqref{poroueqhom}, \eqref{initialhom} is also an entropy (and then a kinetic) solution.

%%%%%%%%%%%%%%%%%%%%%%%%%%%%%%%%%%%%%%%%%%%%%%%%%

\subsection{Heterogeneous porous-medium type equations}\label{kinetichetero}

In \cite{d}, Dalibard defines a notion of kinetic solutions for parabolic conservation laws of type $\p_t u+\text{div}(A(x,u(x))-\Delta u=0.$  In the homogeneous case, constants are stationary solutions, while they no longer play a special role in the context of heterogeneous conservation laws. Starting from this remark, already pointed out in \cite{ap}, she derives a  definition of kinetic solution  taking in the entropy inequalities   $S(u)=(u-v(x))_+$  with $v$ stationary solution.

In this paper, following this idea, we  obtain a kinetic formulation for the heterogeneous porous-medium type equations of the form
\begin{equation}\label{porousequ}
%\begin{cases}
\p_{t}
u - \Delta f(x,u)=0\quad\text{in } \R^+\times\R^N,
%\end{cases}
\end{equation} with initial data
\begin{equation}\label{porousequinicond}
u(0,x)=u_0(x)\quad \text{on } \R^N.
\end{equation}
Throughout  this section  on $f$ we  assume: 

\renewcommand{\labelenumi}{({\bf f\arabic{enumi}})}
\begin{enumerate}
\item \label{f1} $f(x,\cdot)$ is strictly increasing and locally Lipschitz  continuous uniformly in $x$. Moreover, $\lim_{u\rightarrow\pm \infty}f(x,u)=\pm \infty$, uniformly in $x$;
\item \label{f2} $f(\cdot,u)$ is continuous and bounded for all $u\in\R$. 
\suspend{enumerate}

 Let us recall the notion of weak solution of the Cauchy problem \eqref{porousequ}, \eqref{porousequinicond}. 
\begin{de}\label{weaksolded} A function $u\in L^\infty(\R^+\times\R^N)$ is said to be a weak solution of \eqref{porousequ}, \eqref{porousequinicond} if the following holds:
\begin{itemize}
\item[(i)]  $f(x,u(t,x))\in L_{loc}^2(\R^+; H^1_{loc}(\R^N))$;
\item[(ii)] for any $\phi\in C^\infty_c([0,\infty)\times\R^N)$, we have 
\beqs \int_{\R^+\times\R^N}[u\phi_t-\nabla f(x,u)\cdot \nabla\phi]\,dt\,dx+\int_{\R^N}u_0\phi(0,x)\,dx=0.\eeqs
\end{itemize}
\end{de}
The existence of a unique  weak solution of  \eqref{porousequ}, \eqref{porousequinicond} is proven in \cite{fs} under the assumptions (f\ref{f1}) and (f\ref{f2}), see also \cite{afs}.  Adapting the  techniques of  \cite{c} in order to handle the explicit dependence on $x$ of $f$,   the authors also show that weak solutions of  \eqref{porousequ} satisfy an $L^1$-stability property. These results are recalled in the following:

\begin{thm}[\cite{fs}, Theorem 5.1]\label{afsthm2}  Assume   (f\ref{f1}), (f\ref{f2}) 
and $u_0\in  L^\infty(\R^N)$, then we have the following: 
\begin{itemize}
\item[(i)] There exists a unique weak solution $u\in L^\infty(\R^+\times\R^N)\cap C([0,+\infty);L^1_{loc}(\R^N))$ of \eqref{porousequ}, \eqref{porousequinicond}. 
\item[(ii)] If $u_1,\, u_2 $ are weak solutions of  \eqref{porousequ} with initial data respectively $u_{01},\,u_{02}\in L^\infty(\R^N)$, then for all $\phi\in C^\infty_c([0,\infty)\times\R^N)$, $\phi\ge 0$, we have 
\beqs\begin{split}& \int_{\R^+\times\R^N}[(u_1(t,x)-u_2(t,x))_+\phi_t+(f(x,u_1(t,x))-f(x,u_2(t,x)))_+\Delta\phi]\,dt\,dx\\&+\int_{\R^N}(u_{01}(x)-u_{02}(x))_+\phi(0,x)\,dx\geq 0.
\end{split}\eeqs
\end{itemize}
Moreover (ii) holds true also with the positive part replaced by the negative part. 
\end{thm}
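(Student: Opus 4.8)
The plan is to establish existence in (i) by a non-degenerate parabolic approximation and then to obtain the comparison inequality (ii) --- from which uniqueness follows --- by the doubling-of-variables method of Carrillo \cite{c}, adapted to accommodate the explicit $x$-dependence of $f$.

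For existence, I would first regularize the nonlinearity: choose $f_n(x,\cdot)$ with $\p_u f_n\ge 1/n>0$, converging to $f(x,\cdot)$ locally uniformly while preserving (f\ref{f2}), and replace $u_0$ by smooth, compactly supported, uniformly bounded data $u_{0n}$. For each $n$ the problem $\p_t u_n-\Delta f_n(x,u_n)=0$ is uniformly parabolic and admits a strong solution by standard quasilinear theory. The a priori estimates I would need are: an $L^\infty$ bound on $u_n$ inherited from $\|u_0\|_{L^\infty}$ via the comparison principle together with (f\ref{f1}) (which keeps the range of $u_n$ bounded); an energy estimate, obtained by testing with $\zeta^2 f_n(x,u_n)$ for a spatial cutoff $\zeta$, yielding $\nabla f_n(x,u_n)\in L^2_{loc}(\R^+;L^2_{loc}(\R^N))$ uniformly in $n$; and an $L^1$-type bound with equicontinuity in time read off from the equation. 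Compactness (Aubin--Lions, or the $L^1_{loc}$-compactness coming from the contraction itself) then lets me pass to the limit and identify a weak solution $u\in L^\infty(\R^+\times\R^N)\cap C([0,\infty);L^1_{loc}(\R^N))$ in the sense of Definition \ref{weaksolded}, the time-continuity following from the weak formulation once the equation holds.

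For (ii) I would run the doubling of variables. Writing $u_1=u_1(t,x)$ and $u_2=u_2(s,y)$ and testing each equation against a kernel $\rho_\delta(x-y)\theta_\delta(t-s)\,\phi(\tfrac{t+s}{2},\tfrac{x+y}{2})$ concentrating on the diagonal, I would add the two resulting entropy relations for the entropy $(u_1-u_2)_+$ paired with $(f(x,u_1)-f(y,u_2))_+$. The genuinely parabolic obstruction --- absent in the first-order Kruzhkov setting --- is the second-order contribution: after the double integration by parts one is left with mixed terms of the form $\int {\bf 1}_{\{u_1>u_2\}}\,\nabla_x f(x,u_1)\cdot\nabla_y f(y,u_2)\,\rho_\delta$, and the core of Carrillo's argument is a dissipation lemma showing that these terms carry a favorable (nonnegative) sign in the limit, so that they may be discarded to yield an inequality rather than an identity. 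This is the step I expect to be the main difficulty, and it is where (f\ref{f1})--(f\ref{f2}) and the regularity $f(x,u)\in L^2_{loc}(\R^+;H^1_{loc}(\R^N))$ enter decisively.

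Finally I would let $\delta\to0$ to collapse the doubling. Here the continuity and boundedness of $f(\cdot,u)$ in (f\ref{f2}) are what allow $(f(x,u_1(t,x))-f(y,u_2(s,y)))_+$ to be replaced by $(f(x,u_1)-f(x,u_2))_+$ as $x-y,\,t-s\to0$, recovering exactly the stated inequality together with its boundary term from the initial data. The negative-part version is then immediate by exchanging the roles of $(u_1,u_{01})$ and $(u_2,u_{02})$, since $(u_1-u_2)_-=(u_2-u_1)_+$ and likewise for $f$. Uniqueness in (i) is the standard consequence: taking $u_1,u_2$ with $u_{01}=u_{02}$, the boundary term drops, and choosing a sequence of nonnegative test functions exhausting $\R^+\times\R^N$ gives $\int(u_1-u_2)_+\,dx\le0$; combined with the negative-part inequality this forces $u_1=u_2$.
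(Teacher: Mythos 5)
This theorem is not proven in the paper at all: it is quoted from Frid and Silva \cite{fs} (Theorem 5.1), and the paper characterizes that proof exactly as you reconstruct it, namely an adaptation of Carrillo's techniques \cite{c} to handle the explicit $x$-dependence of $f$. Your outline --- non-degenerate parabolic approximation plus compactness for existence, Kruzhkov/Carrillo doubling of variables with the dissipation-sign (complete-square) argument for the $L^1$-comparison, and uniqueness as a corollary of (ii) --- is essentially that same route, and the structural point you isolate (that strict monotonicity of $f(x,\cdot)$ at a common point $x$ aligns the sign of $u_1-u_2$ with that of $f(x,u_1)-f(x,u_2)$, so the mixed second-order terms can be absorbed into a nonpositive square) is indeed the heart of the cited proof.
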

If $u_2$ is stationary, inequality (ii) of Theorem \ref{afsthm2} is a consequence of the next lemma we are going to state and which is proven in \cite{fs} (see the proof of Theorem 5.1). 
The lemma is a central tool in our analysis in order to get a kinetic formulation for the Cauchy problem  \eqref{porousequ}, \eqref{porousequinicond}. Let us first introduce some notation.
Let  $H_\sigma:\R\rightarrow\R$  be the approximation of the Heaviside function given by
\beq\label{heaviside} H_\sigma(s):=\begin{cases} 1, & \text{for}\quad s>\sigma,\\
\frac{s}{\sigma}, & \text{for}\quad 0<s\le \sigma,\\
0,& s\leq 0.
\end{cases}
\eeq
Moreover, for $k\in\R$, let us  define 
\beqs B_{\sigma}^k(x,\lam):=\int_k^\lam  H_\sigma(f(x,r)-f(x,k))dr.\eeqs
%\beqs B_{-\sigma}^k(\lam):=\int_k^\lam  (H_\sigma)_-(f(r)-f(k))dr.\eeqs

\begin{lem}\label{afsthm} Assume   (f\ref{f1}), (f\ref{f2})  and
let $u_1,\,u_2$ be weak solutions of  \eqref{porousequ} with initial data respectively $u_{01},\,u_{02}\in L^\infty(\R^N)$. Assume that $u_2=u_{02}$ is a stationary solution. Then, for all $\phi\in C^\infty_c([0,+\infty)\times\R^N)$ we have 
\beq\label{entropyeqthm} \begin{split}&-\int_{\R^+\times\R^N}B_\sigma^{u_2(x)}(x,u_1(t,x))\phi_t\,dt\,dx-
\int_{\R^N}B_\sigma^{u_2(x)}(x,u_{01}(x))\phi(0,x))\,dx
\\&+\int_{\R^+\times\R^N}H_\sigma(f(x,u_1(t,x))-f(x,u_2(x)))\nabla [f(x,u_1(t,x))-f(x,u_2(x))]\cdot\nabla \phi\, dt\,dx\\&
=-\int_{\R^+\times\R^N}|\nabla [f(x,u_1(t,x))-f(x,u_2(x))]|^2H_\sigma'(f(x,u_1(t,x))-f(x,u_2(x)))\phi\, dt\,dx.
\end{split}\eeq
\end{lem}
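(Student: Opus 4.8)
The plan is to run the standard renormalization (entropy) computation for the solution $u_1$, using the stationarity of $u_2$ to cancel the $u_2$ contribution, and then integrate by parts. Formally, set $w:=f(x,u_1(t,x))-f(x,u_2(x))$. Since $u_2=u_{02}$ is a stationary weak solution, $\Delta f(x,u_2)=0$, so $u_1$ solves $\p_t u_1=\Delta w$ in the distributional sense. From $\p_\lam B_\sigma^{u_2(x)}(x,\lam)=H_\sigma(f(x,\lam)-f(x,u_2(x)))$ we get the chain rule in the last argument, $H_\sigma(w)\,\p_t u_1=\p_t\big(B_\sigma^{u_2(x)}(x,u_1)\big)$, while the product rule gives $H_\sigma(w)\Delta w=\text{div}\big(H_\sigma(w)\nabla w\big)-H_\sigma'(w)|\nabla w|^2$. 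Combining,
\beqs \p_t\big(B_\sigma^{u_2(x)}(x,u_1)\big)=\text{div}\big(H_\sigma(w)\nabla w\big)-H_\sigma'(w)|\nabla w|^2.\eeqs
Testing against $\phi\in C^\infty_c([0,\infty)\times\R^N)$ and integrating by parts in $t$ (which produces the initial term $\int_{\R^N}B_\sigma^{u_2(x)}(x,u_{01})\phi(0,x)\,dx$) and in $x$ (which produces $\int H_\sigma(w)\nabla w\cdot\nabla\phi$) yields exactly \eqref{entropyeqthm} after rearrangement.

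The spatial part of this is routine. By Definition \ref{weaksolded}(i), applied both to $u_1$ and to the stationary $u_2$, the function $w$ belongs to $L^2_{loc}(\R^+;H^1_{loc}(\R^N))$; since $H_\sigma$ is Lipschitz, the chain rule for Sobolev functions gives $\nabla\big(H_\sigma(w)\big)=H_\sigma'(w)\nabla w$ a.e., so the product-rule identity above and the spatial integration by parts are justified distributionally. The delicate point is the time derivative: $u_1$ only satisfies $\p_t u_1=\Delta w\in L^2_{loc}(\R^+;H^{-1}_{loc}(\R^N))$, so the product $H_\sigma(w)\,\p_t u_1$ and its identification with $\p_t\big(B_\sigma^{u_2(x)}(x,u_1)\big)$ must be interpreted through the $H^1$--$H^{-1}$ duality together with a chain-rule formula for the time derivative.

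To handle this I would work in the $H^1$--$H^{-1}$ duality: since $\p_t u_1=\Delta w\in L^2_{loc}(\R^+;H^{-1}_{loc}(\R^N))$ and $H_\sigma(w)\phi\in L^2_{loc}(\R^+;H^1_{loc}(\R^N))$ has compact support, the pairing $\langle \p_t u_1,\,H_\sigma(w)\phi\rangle$ is well defined, and the substance of the argument is a chain-rule lemma identifying $\int_0^T\langle \p_t u_1,\,H_\sigma(w)\phi\rangle\,dt$ with the $B_\sigma$-terms $-\int B_\sigma^{u_2(x)}(x,u_1)\phi_t-\int B_\sigma^{u_2(x)}(x,u_{01})\phi(0,x)$. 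Because the monotone nonlinearity $f$ sits between $u_1$ and $w$, this does not reduce to the abstract Lions integration-by-parts formula; it is established by mollifying in the $t$ variable, testing the mollified equation against $H_\sigma\big(f(x,u_1^h)-f(x,u_2)\big)\phi$, applying the classical chain rule to the now $t$-differentiable map $t\mapsto B_\sigma^{u_2(x)}(x,u_1^h)$, and letting $h\to0$. This is precisely the scheme carried out in the proof of Theorem 5.1 of \cite{fs}, adapting Carrillo's technique \cite{c} to the $x$-dependent flux, to which I would refer for the full computation.

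The main obstacle is that time mollification does not commute with the nonlinearity $f$, so one cannot naively pass to the limit as $h\to0$ in the terms $H_\sigma(w)\nabla w$ and, especially, in the quadratic dissipation $-\int H_\sigma'(w)|\nabla w|^2\phi$, for which only weak $L^2_{loc}$ convergence of the regularized gradients is available a priori. The resolution rests on the monotonicity of $f(x,\cdot)$ (assumption (f\ref{f1})) to control the commutator error produced by the mollification, and on the fact that $u_2$ is a single fixed stationary solution, so that no Kruzhkov doubling of variables is needed and the matter reduces to the chain-rule lemma just described. This is the technically heaviest step, and it is exactly where the detailed estimates of \cite{fs,c} are used.
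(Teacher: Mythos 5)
Your proposal is correct and takes essentially the same route as the paper: the paper does not prove Lemma \ref{afsthm} itself but simply refers to the proof of Theorem 5.1 in \cite{fs} (Carrillo's renormalization technique \cite{c} adapted to the $x$-dependent flux), which is exactly the formal $B_\sigma$-chain-rule computation plus time-mollification scheme you outline and then defer to. Your identification of the $H^1$--$H^{-1}$ duality and the monotonicity-based control of the mollification commutator as the technical core is precisely where the cited argument does its work.
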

Let $g(x,\cdot):=f^{-1}(x,\cdot)$, then by   (f\ref{f1}) and  (f\ref{f2}), one can easily show that 
$g:\R^N\times\R\to\R$ is continuous, $g(\cdot,p)\in L^\infty(\R^N)$  for any fixed $p\in\R$, and $\lim_{p\to\pm\infty}g(x,p)=\pm\infty$ uniformly in $x$. 
Moreover  $p=f(x,g(x,p))\in L_{loc}^2(\R^+; H^1_{loc}(\R^N))$. 
Thus, for any $p\in\R$, the function 
\beq\label{correctprel}v(x,p):=g(x,p),\eeq 
 is a stationary solution of \eqref{porousequ}.  We can therefore apply 
identity \eqref{entropyeqthm}  with $u_1(t,x)=u(t,x)$ the weak solution of \eqref{porousequ}, \eqref{porousequinicond}, and $u_2(x)=v(x,p)$. The entropy formulation for 
\eqref{porousequ}, given in the next proposition, is obtained by 
passing to the limit as $\sigma\rightarrow0$.
In order to make sense to the limit of the right-hand side of \eqref{entropyeqthm}, we have to consider  $p\in\R$ as  a new real-valued variable. 
\begin{prop}Assume  (f\ref{f1}),  (f\ref{f2})  and $u_0\in L^\infty(\R^N)$. Let $u$ be  the weak solution of \eqref{porousequ}, \eqref{porousequinicond}  and   
$v(x,p)$ be defined as in 
\eqref{correctprel}, then we have 
\beq\label{u-v+eq} \p_t( u(t,x)-v(x,p))_+-\Delta(f(x,u)-p)_+=-m, \eeq
%\beq\label{u-v-eq} \p_t( u(t,y)-v(y,p))_--\Delta(f(u)-(V(y)+p))_-=m, \eeq
in the sense of distributions in $[0,\infty)\times\R^N\times\R$,
where $$m(t,x,p)=|\nabla f(x,u)|^2\delta_{f(x,u)}(p)$$
is a nonnegative measure on $\R^+\times\R^N\times\R$.
\end{prop}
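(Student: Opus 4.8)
The plan is to pass to the limit as $\sigma\to 0$ in the identity \eqref{entropyeqthm} of Lemma \ref{afsthm}, applied with $u_1=u$ the weak solution of \eqref{porousequ}, \eqref{porousequinicond} and $u_2(x)=v(x,p)=g(x,p)$, where $p\in\R$ is regarded as a parameter. First I would check that this choice is admissible: as recalled just above the statement, $f(x,v(x,p))=p$ is constant in $x$, so $p=f(x,g(x,p))\in L^2_{loc}(\R^+;H^1_{loc}(\R^N))$ and $v(\cdot,p)$ is a bounded stationary weak solution; moreover $\nabla[f(x,u)-f(x,v(x,p))]=\nabla f(x,u)$, and $B_\sigma^{v(x,p)}(x,\lambda)=\int_{g(x,p)}^\lambda H_\sigma(f(x,r)-p)\,dr$. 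For each fixed $p$ the identity \eqref{entropyeqthm} holds for all $\phi\in C_c^\infty([0,\infty)\times\R^N)$; to obtain the distributional statement on $[0,\infty)\times\R^N\times\R$ I would test against $\phi=\phi(t,x,p)\in C_c^\infty([0,\infty)\times\R^N\times\R)$ and integrate the resulting identity in $p$, which is legitimate since $\sigma$ is independent of $p$.

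Next I would treat the three terms on the left separately. Since $H_\sigma(s)\to\mathbf{1}_{\{s>0\}}$ pointwise and $f(x,\cdot)$ is strictly increasing by (f\ref{f1}), one has $f(x,r)>p\iff r>g(x,p)$, so dominated convergence gives $B_\sigma^{v(x,p)}(x,\lambda)\to\int_{g(x,p)}^\lambda\mathbf{1}_{\{r>g(x,p)\}}\,dr=(\lambda-g(x,p))_+$; evaluating at $\lambda=u(t,x)$ and $\lambda=u_0(x)$ and using the uniform bound on $B_\sigma$ handles the time-derivative and the initial terms, producing $-\int(u-v)_+\phi_t$ and $-\int(u_0-v(x,p))_+\phi(0,x,p)$. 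For the flux term, $H_\sigma(f(x,u)-p)\nabla f(x,u)\to\mathbf{1}_{\{f(x,u)>p\}}\nabla f(x,u)=\nabla(f(x,u)-p)_+$ (chain rule for the positive part, valid since $f(x,u)\in H^1_{loc}$), dominated by $|\nabla f(x,u)|\,|\nabla\phi|\in L^1$; integrating by parts in $x$ this term converges to $-\int(f(x,u)-p)_+\Delta\phi$. Together these three limits reconstruct exactly the distributional pairing of $\p_t(u-v)_+-\Delta(f(x,u)-p)_+$, with the initial datum included, against $\phi$.

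The step I expect to be the main obstacle is the convergence of the right-hand side of \eqref{entropyeqthm} to the measure $m$, where $H_\sigma'$ concentrates. From \eqref{heaviside} one has $H_\sigma'(s)=\sigma^{-1}\mathbf{1}_{\{0<s<\sigma\}}$, so for fixed $(t,x)$ I would rewrite the $p$-integral of the right-hand side via the change of variable $q=f(x,u)-p$ as $\sigma^{-1}\int_0^\sigma\phi(t,x,f(x,u)-q)\,dq$, which tends to $\phi(t,x,f(x,u))$ by continuity of $\phi$ and is bounded by $\|\phi\|_\infty$. Because $f(x,u)\in L^2_{loc}(\R^+;H^1_{loc}(\R^N))$ by Definition \ref{weaksolded}, we have $|\nabla f(x,u)|^2\in L^1_{loc}$, so the dominated convergence theorem yields
$$\int|\nabla f(x,u)|^2H_\sigma'(f(x,u)-p)\phi\,dt\,dx\,dp\longrightarrow\int_{\R^+\times\R^N}|\nabla f(x,u)|^2\phi(t,x,f(x,u))\,dt\,dx=\int\phi\,dm,$$
which identifies $m(t,x,p)=|\nabla f(x,u)|^2\delta_{f(x,u)}(p)$ as a nonnegative, locally finite measure. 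Collecting the four limits gives \eqref{u-v+eq} in the sense of distributions; the only delicate verifications are the integrable dominations justifying the three dominated-convergence passages, which all rest on (f\ref{f1})--(f\ref{f2}), the $L^\infty$ bound on $u$, and the local $H^1$ regularity of $f(x,u)$.
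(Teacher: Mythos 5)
Your proposal is correct and follows essentially the same route as the paper: apply Lemma \ref{afsthm} with $u_1=u$, $u_2=v(x,p)$, integrate in $p$ against a test function, and let $\sigma\to 0$, with the right-hand side concentrating onto $|\nabla f(x,u)|^2\delta_{f(x,u)}(p)$ via the explicit form of $H_\sigma'$. The only cosmetic differences are that the paper reduces to product test functions $\phi(t,x)\xi(p)$ while you keep general $\phi(t,x,p)$, and that you spell out the dominated-convergence justifications for the $B_\sigma$ and flux terms, which the paper leaves implicit.
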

\proof 
We have to show that for any $\psi\in C_c^\infty([0,+\infty)\times\R^N\times\R)$,
\begin{equation}\begin{split}\label{lemmakinecform} 
&\int_{\R^+\times\R^N\times\R}\{-(u(t,x)-v(x,p))_+\psi_t+\nabla [(f(x,u(t,x))-p)_+]\cdot\nabla \psi\} \,dt\,dx\,dp\\&-\int_{\R^N\times\R}(u_0(x)-v(x,p))_+\psi(0,x,p)\,dx\,dp
\\&=-\int_{\R^+\times\R^N}|\nabla f(x,u(t,x))|^2\psi(t,x ,f(x,u))\,dt\,dx.
\end{split}
\end{equation}
It suffices to show \eqref{lemmakinecform}  for $\psi(t,x,\xi)=\phi(t,x)\xi(p)$ with  $\phi\in C_c^\infty([0,+\infty)\times\R^N)$ and $\xi\in C_c^\infty(\R)$. 
Applying  \eqref{entropyeqthm}  with $u_1(t,x)=u(t,x)$, $u_2(x)=v(x,p)$ and then integrating in $p$, we find

\beq \begin{split}\label{lemprelkin1}&-\int_{\R}\xi(p)\int_{\R^+\times\R^N}B_\sigma^{v(x,p)}(x,u(t,x))\phi_t\,dt\,dx\,dp-\int_{\R}\xi(p)\int_{\R^N}B_\sigma^{v(x,p)}(x,u_0(x))\phi(0,x)\,dx\,dp
\\&+\int_{\R}\xi(p)\int_{\R^+\times\R^N}H_\sigma(f(x,u(t,x))-f(x,v(x,p)))\nabla [f(x,u(t,x))-f(x,v(x,p))]\cdot\nabla \phi \,dt\,dx\,dp\\&
=-\int_{\R}\xi(p)\int_{\R^+\times\R^N}|\nabla [f(x,u(t,x))-f(x,v(x,p))]|^2H_\sigma'(f(x,u(t,x))-f(x,v(x,p)))\phi \,dt\,dx\,dp.
\end{split}\eeq

Remind that $f(x,v(x,p))=p$. Moreover 

\beqs H_\sigma'(f(x,u)-f(x,v))=\begin{cases} 
\frac{1}{\sigma}, & \text{for}\quad f(x,u)-\sigma<p<f(x,u),\\
0, & \text{for}\quad p< f(x,u)-\sigma\text{ or }p>f(x,u).\\
\end{cases}
\eeqs

Hence the right-hand side of \eqref{lemprelkin1} is equal to the following quantity
\beqs -\int_{\R^+\times\R^N}|\nabla f(x,u(t,x))|^2\phi(t,x)\int_{f(x,u)-\sigma}^{f(x,u)}\frac{\xi(p)}{\sigma}dpdtdx.\eeqs

Passing to the limit as $\sigma\rightarrow 0$  in \eqref{lemprelkin1},
we finally get \eqref{lemmakinecform}.
%\beqs\begin{split}&\int_{\R^+\times\R^N\times\R}\{-(u(t,x)-v(x,p))_+\xi\phi_t+\nabla [(f(x,u(t,x))-p)_+]\cdot\nabla \phi\xi\} \,dt\,dx\,dp\\&
%=-\int_{\R^+\times\R^N}|\nabla f(x,u(t,x))|^2\phi\xi (f(x,u))\,dt\,dx,
%\end{split}
%\eeqs
%which is  \eqref{lemmakinecform}. 
%If $\phi\in C_c^\infty([0,\infty)\times\R^N)$, take $\phi(t,x)\xi(p)H_h(t)$ as test function in \eqref{u-v+eq}, where $H_h$ is the approximation of the Heaviside function defined before. 
%The result is then achieved  passing to the limit as $h\rightarrow 0$ and using that $u\in C([0,\infty);L_{loc}^1(\R^N))$.
%i.e. $(u(t,y)-v(y,p))_+$ is a solution of \eqref{u-v+eq} in the sense of distribution. 
%Consequently,  $(u(t,y)-v(y,p))_-=1-(u(t,y)-v(y,p))_+$ satisfies  \eqref{u-v-eq}.
\finedim

We next consider initial data of the form 
$$u_0(x)=v(x,\varphi(x))$$ with $\varphi(x)\in L^\infty(\R^N)$. By assumptions (f\ref{f1}) (f\ref{f2}), we have that   $v(x,\varphi(x))\in L^\infty(\R^N)$ if  $\varphi(x)\in L^\infty(\R^N)$.

\begin{lem}\label{uepestimlem} Assume (f\ref{f1}) (f\ref{f2}) and  let $u\in L^\infty(\R^+\times\R^N)\cap C([0,+\infty);L^1_{loc}(\R^N))$ be the weak solution of  
\eqref{porousequ}, \eqref{porousequinicond}  with    $u_0(x)=v(x,\varphi(x))$ for some function $\varphi\in  L^\infty(\R^N)$, and $v(x,p)$ defined as in 
\eqref{correctprel}.  
Assume in addition that 
\begin{equation}\label{varphigli1}v(x,\varphi(x))-v(x,0)\in L^1(\R^N).\end{equation}
Then the following holds:
\begin{itemize} 
\item[(i)] For all $t\geq 0$ and  $p\geq 0$
\beqs \int_{\R^N}(u(t,x)-v(x,p))_{+}dx\leq  \int_{\R^N}(v(x, \varphi(x))-v(x,p))_{+}dx< \infty;\eeqs
for all $t\geq 0$ and  $p\leq 0$ % $(v(x, \varphi(x))-v(x,p))_{-}\in L^1(\R^N)$ and 
\beqs \int_{\R^N}(u(t,x)-v(x,p))_{-}dx\leq  \int_{\R^N}(v(x, \varphi(x))-v(x,p))_{-}dx< \infty.\eeqs
\item[(ii)] If $p_1<0<p_2\in \R$ are such that $p_1\leq \varphi(x)\leq p_2$ for any $x\in \R^N$, then
$$ v(x,p_1)\leq u(t,x)\leq v(x,p_2)\quad \text{for all }(t,x)\in\R^+\times\R^N.$$
\item[(iii)] Let $\eta$ be the positive measure on $\R$ defined by 
$$\int_{\R}\xi(p) d\eta(p):=\int_{\R^+\times\R^N\times\R}\xi(p)m(t,x,p)dtdx,$$ 
for  $\xi\in C_c(\R)$, then 
$$\eta\le \eta_0\quad\text{in }\mathcal{D}'(\R)$$ where 
$$\eta_0:={\bf 1}_{\{p>0\}}\|(v(x,\varphi(x))-v(x,p))_+\|_{L^1(\R^N)}+{\bf 1}_{\{p<0\}}\|(v(x,\varphi(x))-v(x,p))_-\|_{L^1(\R^N)}.$$
\end{itemize} 
\end{lem}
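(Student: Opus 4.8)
The plan is to derive all three parts from a single integrated balance identity, which I would obtain by testing the stability inequality against a product cutoff and controlling the resulting boundary term. The starting point is Theorem \ref{afsthm2}(ii), applied with $u_1=u$ and with the stationary solution $u_2=v(\cdot,p)=g(\cdot,p)$, whose initial datum is $v(\cdot,p)$ itself. Since $f(x,v(x,p))=p$, this reads, for every $\phi\in C_c^\infty([0,\infty)\times\R^N)$ with $\phi\ge0$,
\beqs
\begin{split}
&\int_{\R^+\times\R^N}[(u-v(\cdot,p))_+\phi_t+(f(x,u)-p)_+\Delta\phi]\,dt\,dx\\
&\quad+\int_{\R^N}(v(x,\varphi)-v(x,p))_+\phi(0,x)\,dx\ge0,
\end{split}
\eeqs
and it is the distributional form of the kinetic identity \eqref{u-v+eq} with the nonnegative dissipation $-m$ absorbed. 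I would take $\phi(t,x)=\theta_\tau(t)\zeta_R(x)$, where $\theta_\tau$ is a smooth decreasing approximation of ${\bf 1}_{[0,\tau]}$ (so $\theta_\tau(0)=1$ and $-\theta_\tau'\to\delta_\tau$) and $\zeta_R(x)=\zeta(x/R)$ is a standard cutoff equal to $1$ on $B_1$ and supported in $B_2$. Using the continuity $u\in C([0,\infty);L^1_{loc}(\R^N))$ to pass to the limit in the time regularization gives
\beqs
\begin{split}
\int_{\R^N}\zeta_R\,(u(\tau,x)-v(x,p))_+\,dx\le{}&\int_{\R^N}\zeta_R\,(v(x,\varphi)-v(x,p))_+\,dx\\
&+\int_0^\tau\!\int_{\R^N}(f(x,u)-p)_+\,\Delta\zeta_R\,dx\,dt.
\end{split}
\eeqs

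For $p\ge0$ the first term on the right is dominated by $\|v(\cdot,\varphi)-v(\cdot,0)\|_{L^1(\R^N)}$, because $v(x,\cdot)$ increasing forces $0\le(v(x,\varphi)-v(x,p))_+\le(v(x,\varphi)-v(x,0))_+$, which lies in $L^1(\R^N)$ by \eqref{varphigli1}; it converges to $\int_{\R^N}(v(x,\varphi)-v(x,p))_+\,dx$ as $R\to\infty$. The hard part is the error term carrying $\Delta\zeta_R$. Using (f\ref{f1}) together with the boundedness of $u$ and of $v(\cdot,p)$, a uniform local Lipschitz bound yields $0\le(f(x,u)-p)_+\le L\,(u-v(\cdot,p))_+$, so the error term is controlled by $L\|\Delta\zeta\|_\infty R^{-2}\int_0^\tau\!\int_{B_{2R}\setminus B_R}(u-v(\cdot,p))_+$, which vanishes precisely when $(u(t,\cdot)-v(\cdot,p))_+$ is integrable with a tail uniformly small in $t$. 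Breaking the apparent circularity here is the genuine obstacle of the proof: I would first establish mere finiteness via a weighted test function $\rho(x)=(1+|x|^2)^{-s}\in L^1(\R^N)$, for which $|\Delta\rho|\le C\rho$ converts the inequality into a Gronwall estimate for $\int_{\R^N}\rho\,(u(t)-v(\cdot,p))_+$, and then upgrade to the sharp weight-free bound; an alternative is to approximate $\varphi$ by data for which $v(\cdot,\varphi_n)-v(\cdot,p)$ is compactly supported and to invoke the $L^1$-stability of Theorem \ref{afsthm2}. Once the boundary term is killed, integrating the kinetic identity \eqref{u-v+eq} in $x$ over $\R^N$ produces the balance identity
\beqs
\int_{\R^N}(u(t,x)-v(x,p))_+\,dx+\int_0^t\!\int_{\R^N}m(s,x,p)\,dx\,ds=\int_{\R^N}(v(x,\varphi)-v(x,p))_+\,dx,
\eeqs
valid for $p\ge0$, with the corresponding negative-part identity for $p\le0$ (finite there because $(v(x,\varphi)-v(x,p))_-\le|v(x,\varphi)-v(x,0)|\in L^1$). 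Dropping the nonnegative dissipation term is exactly statement (i).

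Statement (ii) is then a formal corollary. For the upper bound take $p=p_2>0$ in (i): since $\varphi\le p_2$ and $v(x,\cdot)$ is increasing, $(v(x,\varphi)-v(x,p_2))_+\equiv0$, hence $\int_{\R^N}(u(t,x)-v(x,p_2))_+\,dx=0$ and so $u(t,x)\le v(x,p_2)$; for the lower bound take $p=p_1<0$ in the negative-part version, where $\varphi\ge p_1$ forces $(v(x,\varphi)-v(x,p_1))_-\equiv0$ and hence $u(t,x)\ge v(x,p_1)$. The pointwise-everywhere assertion follows from the continuity of $u$.

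Finally, (iii) follows from the same balance identity by sending $t\to\infty$. For $p\ge0$ both nonnegative terms on the left are bounded by the right-hand side, so $\int_0^\infty\!\int_{\R^N}m(s,x,p)\,dx\,ds\le\|(v(\cdot,\varphi)-v(\cdot,p))_+\|_{L^1(\R^N)}$ in the sense of distributions on $\{p>0\}$, and the negative-part identity gives the bound by $\|(v(\cdot,\varphi)-v(\cdot,p))_-\|_{L^1(\R^N)}$ on $\{p<0\}$. Testing against $\xi\in C_c(\R)$ with $\xi\ge0$ and recalling that $\int_\R\xi\,d\eta=\int_{\R^+\times\R^N}\xi(f(x,u))|\nabla f(x,u)|^2\,dt\,dx$, these two inequalities combine to $\eta\le\eta_0$ in $\mathcal{D}'(\R)$. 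I expect the tail/integrability step in the proof of (i) to be the only delicate point; parts (ii) and (iii) are routine consequences of the balance identity.
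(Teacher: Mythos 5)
Your proposal follows essentially the same route as the paper's proof: part (i) comes from the $L^1$-stability inequality of Theorem \ref{afsthm2}(ii) applied with $u_2=v(\cdot,p)$ and tested against smooth compactly supported approximations of ${\bf 1}_{[0,t)\times\R^N}$ (after noting that monotonicity of $v(x,\cdot)$ together with \eqref{varphigli1} gives $(v(\cdot,\varphi)-v(\cdot,p))_{\pm}\in L^1(\R^N)$), part (ii) follows from (i) by monotonicity, and part (iii) follows by integrating the kinetic identity \eqref{u-v+eq} against the same cutoffs. The only difference is that you make explicit how the $\Delta\phi$ error term is controlled (the pointwise bound $(f(x,u)-p)_+\le L(u-v(\cdot,p))_+$ plus the weighted test function and Gronwall argument), a step the paper's proof passes over silently; this is a correct filling-in of the same argument rather than a different method.
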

\proof 
For $p\ge0$, we have that 
\beq\label{l1inftyfinitvds} (v(x,\varphi(x))-v(x,p))_+\in L^1(\R^N).\eeq Indeed, by the monotonicity of $v(x,\cdot)$ and \eqref{varphigli1}, 
$$0\leq (v(x,\varphi(x))-v(x,p))_+=(v(x,\varphi(x))-v(x,p)){\bf 1}_{\{\varphi(x)>p\}}\leq (v(x,\varphi(x))-v(x,0))_+\in L^1(\R^N).$$
Similarly, one can prove that for $p\leq 0$, \beq\label{l1inftyfinitvdsbis}  (v(x,\varphi(x))-v(x,p))_-\in L^1(\R^N).\eeq
Then, we apply  (ii) of Theorem \ref{afsthm2} with  $u_1(t,x)=u(t,x)$,  $u_2(t,x)=v(x,p)$,  $u_{01}(x)=v(x,\varphi(x))$, $u_{02}(x)=v(x,p)$ with $p\ge0$.
By using that $$(u_{01}-u_{02})_+\in L^1(\R^N),$$
choosing $\phi(t,x)=\phi_k(t,x)$ with $\{\phi_k\}$ a sequence of functions in $C^\infty_c([0,+\infty)\times\R^N)$ approximating the function ${\bf 1}_{[0,t)\times\R^N}$ and letting $k\to+\infty$, 
we get the first inequality in (i). Similarly, the second inequality is obtained by
(ii) of Theorem \ref{afsthm2} applied to $u_1(t,x)=v(x,p)$ and $u_2(t,x)=u(t,x)$.

The monotonicity of $v(x,\cdot)$ implies that if $p_1<0<p_2$ are such that $p_1\leq \varphi(x)\leq p_2$, then 
$$v(x,p_1)\leq v(x,\varphi(x))\leq v(x,p_2).$$ Therefore,  (ii) is  a consequence of (i).

Finally, from \eqref{u-v+eq} and \eqref{l1inftyfinitvds}, we infer that for any $\xi\in C_c(\R)$
$$\int_0^{+\infty}dp\int_{\R^+\times\R^N}\xi(p)m(t,x,p)dtdx\leq \int_0^{+\infty}\xi(p)\|(v(x,\varphi(x))-v(x,p))_+\|_{L^1(\R^N)}dp.$$
Similarly,   from  \eqref{u-v+eq} and \eqref{l1inftyfinitvdsbis},
$$\int_{-\infty}^0dp\int_{\R^+\times\R^N}\xi(p)m(t,x,p)dtdx\leq \int_{-\infty}^0\xi(p)\|(v(x,\varphi(x))-v(x,p))_-\|_{L^1(\R^N)}dp.$$
Adding the two previous inequalities we get  (iii) and this concludes the proof of the lemma.
\finedim

%We now collect some classical results about existence and $L^1$ stability of weak solutions of \eqref{porousequ}, see e.g. Theorem A.3 in  \cite{afs} and Corollary 10 in  \cite{c}.

The kinetic formulation for \eqref{porousequ}, \eqref{porousequinicond} is finally obtained by deriving \eqref{u-v+eq} with respect to $p$:
\begin{equation}\label{chieq}\begin{cases}\frac{\p}{\p t}\left(\frac{\p v}{\p p}(x,p) \chi_+\right)-\Delta  \chi_+=\frac{\p m}{\p p}(t,x,p)\\
m(t,x,p)=|\nabla f(x,u)|^2\delta_{f(x,u)}(p) \end{cases}\end{equation}
where
\beq\label{chiprelres} \chi_+(t,x,p):={\bf 1}_{\{v(x,p)<u(t,x)\}}.\eeq
Remark that since  $v(x,\cdot)=f(x,\cdot)^{-1}$ is monotone increasing, we have
\beqs \chi_+(t,x,p)={\bf 1}_{\{p<f(x,u(t,x))\}}.\eeqs
In the homogeneous case,  $v(x,p)=f^{-1}(p)$ does not depend on $x$. If we make the change of variable $v=f^{-1}(p)$, equation \eqref{chieq} becomes 
\eqref{paraconseqnoxkin}.

In order to make sense to the equation  \eqref{chieq}, we require that $g(x,p)=v(x,p)$ satisfies the following assumption: 
\resume{enumerate}
\item\label{f3}
$\frac{\partial g}{\partial p}\in L^{1}_{loc}(\R^N\times\R).$
\end{enumerate}
We are now ready to give the definition of kinetic solution for \eqref{porousequ}, \eqref{porousequinicond}.
\begin{de}\label{kinetciheterdef}
A kinetic solution of \eqref{poroueqhom}, \eqref{initialhom}  is a function $u$, with  $u(t,x)-v(x,0)\in L^\infty([0,\infty);L^1(\R^N))$, such that 
\begin{itemize}
\item[(i)]  $f(x,u(t,x))\in L_{loc}^2(\R^+; H^1_{loc}(\R^N))$;
\item[(ii)] $\chi_+(t,x,p)={\bf 1}_{\{v(x,p)<u(t,x)\}}$ satisfies  \eqref{chieq}    in the sense of distributions in $[0,+\infty)\times \R^N\times\R$, with initial datum 
$\chi(0,x,p)={\bf 1}_{\{v(x,p)<u_0(x)\}} $;%$\mathcal{D}'(\R^+\times\R^N\times\R)$;
\item[(iii)] If $n$ is the positive measure on $\R$ defined by 
$$\int_{\R}\xi(p) dn(p):=\int_{\R^+\times\R^N\times\R}\xi(p)m(t,x,p)\,dt\,dx$$ for $\xi\in C_c(\R)$, 
then there exists $\eta\in L^\infty(\R)$ such that $\eta\rightarrow 0$ as $|p|\rightarrow\infty$ and
$$n\le \eta$$%\quad\text{in }\mathcal{D}'(\R).$$  
in the sense of distributions in $\R$.
\end{itemize}
\end{de}
We conclude this section by showing that the weak solution of \eqref{porousequ}, \eqref{porousequinicond} with suitable initial condition is also kinetic solution. 
%, i.e., the function $\chi^+(t,x,p)$ defined  above is solution of the kinetic formulation  \eqref{chieq}. 
Precisely we have:
%Let us show that $\chi^+$ is a solution of \eqref{chieq} in $\mathcal{D}'(\R^+\times\R^N\times\R)$.
%We conclude this section proving that any weak solution of \eqref{porousequ}, \eqref{porousequinicond} with  $h=\Delta V$ and initial data $u_0(y)=v(y,\varphi(y))$ satisfies the %the kinetic equation \eqref{chieq}. 
\begin{prop}\label{chieqeqprop} Assume (f\ref{f1}), (f2), (f\ref{f3}) and  let $u\in L^\infty(\R^+\times\R^N)\cap C([0,+\infty);L^1_{loc}(\R^N))$ be the weak solution of  
\eqref{porousequ}, \eqref{porousequinicond}  with    $u_0(x)=v(x,\varphi(x))$ for some function $\varphi\in  L^\infty(\R^N)$ such that 
$v(x,\varphi(x))-v(x,0)\in L^1(\R^N)$. Then
 the function $\chi_+(t,x,p)$ defined in \eqref{chiprelres} is a solution of \eqref{chieq}  in the sense of distributions in $[0,+\infty)\times\R^N\times\R$  with initial condition $\chi_+(0,x,p)={\bf 1}_{\{p<\varphi(x)\}}.$
 In particular there exists a kinetic solution of \eqref{porousequ}, \eqref{porousequinicond}.
 \end{prop}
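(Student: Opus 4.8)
The plan is to obtain the kinetic equation \eqref{chieq} by differentiating the already-established identity \eqref{u-v+eq} with respect to the variable $p$, performed rigorously at the level of the weak formulation \eqref{lemmakinecform}. Concretely, I would fix an arbitrary test function $\Psi\in C_c^\infty([0,+\infty)\times\R^N\times\R)$ and insert $\psi=\p_p\Psi$ into \eqref{lemmakinecform}; this is legitimate since \eqref{lemmakinecform} was established for every test function (not only for products $\phi(t,x)\xi(p)$), and $\p_p\Psi$ is again in $C_c^\infty$. I would then integrate by parts in $p$ in each term, the compact support of $\Psi$ in $p$ killing all boundary contributions, and read off the weak form of \eqref{chieq}.

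The computation rests on two pointwise differentiation formulas in $p$. First, $(f(x,u)-p)_+$ is Lipschitz in $p$ with $\p_p(f(x,u)-p)_+=-{\bf 1}_{\{p<f(x,u)\}}=-\chi_+$, using $\{p<f(x,u)\}=\{v(x,p)<u\}$ from the monotonicity of $f(x,\cdot)$. Second, since by (f\ref{f3}) the map $v(x,\cdot)=g(x,\cdot)$ is increasing with $\frac{\p v}{\p p}\in L^1_{loc}$, it is absolutely continuous in $p$, and so is $(u-v(x,p))_+$, whose $p$-derivative equals $-\frac{\p v}{\p p}\,{\bf 1}_{\{v(x,p)<u\}}=-\frac{\p v}{\p p}\chi_+$; this is exactly the quantity appearing in \eqref{chieq}, and (f\ref{f3}) is precisely what makes the product $\frac{\p v}{\p p}\chi_+$ a locally integrable function. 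For the initial layer I would use $\{v(x,p)<u_0(x)\}=\{v(x,p)<v(x,\varphi(x))\}=\{p<\varphi(x)\}$, again by monotonicity, so that the initial term reduces to $-\int\frac{\p v}{\p p}{\bf 1}_{\{p<\varphi(x)\}}\Psi(0,x,p)\,dx\,dp$, matching the claimed datum $\chi_+(0,x,p)={\bf 1}_{\{p<\varphi(x)\}}$.

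Integrating by parts in $p$ then transforms the four terms of \eqref{lemmakinecform} as follows: the time term $-\int(u-v)_+\,\p_p\Psi_t$ becomes $-\int\frac{\p v}{\p p}\chi_+\,\Psi_t$; the diffusion term $\int\nabla[(f-p)_+]\cdot\nabla\p_p\Psi$ becomes $\int\nabla\chi_+\cdot\nabla\Psi=-\int\chi_+\Delta\Psi$ (the last equality being the distributional integration by parts in $x$); the initial term becomes the expression above; and the right-hand side $-\int|\nabla f(x,u)|^2\,\p_p\Psi(t,x,f(x,u))\,dt\,dx$ is precisely $-\int m\,\p_p\Psi\,dt\,dx\,dp$ by the definition $m=|\nabla f(x,u)|^2\delta_{f(x,u)}(p)$. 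Collecting these identities is exactly the distributional statement that $\chi_+$ solves \eqref{chieq} with initial datum ${\bf 1}_{\{p<\varphi(x)\}}$. I expect the main obstacle to lie here, in the rigorous justification of the $p$-differentiation under the degenerate hypotheses: giving meaning to $\frac{\p v}{\p p}\chi_+$ and licensing the integration by parts in $p$ when $v(x,\cdot)$ is merely absolutely continuous, not Lipschitz, in $p$, which is exactly the role played by (f\ref{f3}).

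Finally, to conclude the existence of a kinetic solution I would verify the three items of Definition \ref{kinetciheterdef}: item (i) is part of the definition of weak solution; item (ii) is the statement just proven; and item (iii) follows from Lemma \ref{uepestimlem}(iii), since the bound $\eta\le\eta_0$ there has $\eta_0$ bounded and vanishing for $|p|>\|\varphi\|_{L^\infty(\R^N)}$ (because $\varphi\in L^\infty(\R^N)$ and hence $(v(x,\varphi(x))-v(x,p))_\pm\equiv0$ for such $p$), so that $\eta_0\in L^\infty(\R)$ with $\eta_0\to0$ as $|p|\to\infty$, as required. The membership $u-v(x,0)\in L^\infty([0,\infty);L^1(\R^N))$ needed in Definition \ref{kinetciheterdef} comes from Lemma \ref{uepestimlem}(i) evaluated at $p=0$, via \eqref{l1inftyfinitvds} and \eqref{l1inftyfinitvdsbis}.
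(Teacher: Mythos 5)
Your proposal is correct and follows essentially the same route as the paper: the paper likewise derives \eqref{chieq} by testing \eqref{u-v+eq} against $p$-derivatives of test functions (there, products $\phi(t,x)\xi'(p)$) and integrating by parts in $p$, using $\chi_+={\bf 1}_{\{v(x,p)<u\}}={\bf 1}_{\{p<f(x,u)\}}$ and $f(x,u_0(x))=\varphi(x)$ to identify the limit terms and the initial datum. The verification of Definition \ref{kinetciheterdef} via Lemma \ref{uepestimlem}(i) at $p=0$ and Lemma \ref{uepestimlem}(iii) is also exactly the paper's argument.
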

\proof
%Let $F$ be the distribution defined by 
%$$F:=  \frac{\p}{\p t}\left(\frac{\p v}{\p p}(y,p) \chi^+\right)-\Delta  \chi^+.$$ 
Take $\phi=\phi(t,x)\in C^\infty_c([0,\infty)\times\R^N) $ and $\xi=\xi(p)\in C^\infty_c(\R)$, then
by \eqref{u-v+eq} \beqs\begin{split} <\frac{\p m}{\p p},\phi\xi>&=-<m, \phi\xi'>\\&=-\int_{\R^+\times\R^N}\,dt\,dx\,\phi_t(t,x)\int_\R( u(t,x)-v(x,p))_+ \xi'(p)\,dp\\&
-\int_{\R^N}\,dx\,\phi(0,x)\int_\R( u_0(x)-v(x,p))_+ \xi'(p)\,dp\\&
-\int_{\R^+\times\R^N}\,dt\,dx\,\Delta\phi(t,x)\int_\R(f(x,u(t,x))-p)_+\xi'(p) \,dp\\&
=-\int_{\R^+\times\R^N\times\R} \left(\phi_t(t,x)\frac{\p v}{\p p}(x,p)\chi_+(t,x,p)+ \Delta\phi(t,x)) \chi_+(t,x,p) \right)\xi(p)\,dt\,dx\,dp\\&
-\int_{\R^N\times\R}\phi(0,x)\frac{\p v}{\p p}(x,p){\bf 1}_{\{p<f(x,u_0(x))\}} \xi(p)\,dx\,dp,
\end{split}\eeqs  where the last equality is obtained by integrating by parts with respect to $p$ and using that
$$ \chi_+(t,x,p)={\bf 1}_{\{v(x,p)<u(t,x)\}}={\bf 1}_{\{p<f(x,u(t,x))\}}.$$
 Since  in addition $$f(x,u_0(x))=f(x,v(x,\varphi(x)))=\varphi(x),$$ and thus
 $${\bf 1}_{\{p<f(x,u_0(x))\}} ={\bf 1}_{\{p<\varphi(x)\}},$$
we conclude that  $\chi_+(t,x,p)$  is a solution of \eqref{chieq}  in the sense of distributions in $[0,+\infty)\times\R^N\times\R$  with initial condition $\chi_+(0,x,p)={\bf 1}_{\{p<\varphi(x)\}}.$

We have shown that the weak solution $u$ satisfies (i) and (ii) of Definition \ref{kinetciheterdef}. 
By  (i) of Lemma \ref{uepestimlem} and $v(x,\varphi(x))-v(x,0)\in L^1(\R^N)$, we also have that $u(t,x)-v(x,0)\in L^\infty([0,\infty);L^1(\R^N))$. Finally (iii) of Lemma \ref{uepestimlem}    implies that (iii) of   Definition \ref{kinetciheterdef} holds true. Thus  $u$ is kinetic solution of \eqref{porousequ}, \eqref{porousequinicond}.
\finedim

%%%%%%%%%%%%%%%%%%%%%%%%%%%%%%%%%%%%%%%%%%%%%%%%%%%%%%
%%%%%%%%%%%%%%%%%%%%%%%%%%%%%%%%%%%%%%%%%%%%%%%%%%%%%%%%%%

\section{Proof of Theorem \ref{mainthm} }\label{proofmain}
Let $u^\ep(t,x,\om)$ be the weak solution of \eqref{uepeq} whose existence is guaranteed by Theorem \ref{afsthm2}. Let us define
\beq\label{v}v(\om,p):=g(\om,p),\quad\text{where } g(\om,\cdot)=f^{-1}(\om,\cdot),\eeq
\beqs \chi^\ep_+(t,x,p,\om):={\bf 1}_{\{v\left(T\left(\frac{x}{\ep}\right)\om,p\right)<u^\ep(t,x,\om)\}}={\bf 1}_{\{p<f\left(T\left(\frac{x}{\ep}\right)\om,u^\ep(t,x,\om)\right)\}}\eeqs and 
\beqs \chi^\ep_-(t,x,p,\om):={\bf 1}_{\{v\left(T\left(\frac{x}{\ep}\right)\om,p\right)>u^\ep(t,x,\om)\}}={\bf 1}_{\{p>f\left(T\left(\frac{x}{\ep}\right)\om,u^\ep(t,x,\om)\right)\}}.\eeqs 
%Remark that the monotonicity of $v(y,p)$ with respect to $p$ implies that 
%\beqs \chi^\ep(t,x,p,\om)={\bf 1}_{p<f(u^\ep(t,x,\om))}.\eeqs 
Then, by Proposition \ref{chieqeqprop}, for a.e.  $\om\in\Om$, $\chi^\ep_+$ and $\chi^\ep_-$ are respectively   solutions in the sense of distributions in $\R^+\times\R^N\times\R$ of 

\begin{equation}\label{chiepeq}\begin{cases}\frac{\p}{\p t}\left(\frac{\p v}{\p p}\left(T\left(\frac{x}{\ep}\right)\om,p\right) \chi_+^\ep\right)-\Delta  \chi_+^\ep=\frac{\p m^\ep}{\p p}(t,x,p,\om)\\
\chi^\ep_+(0,x,p)={\bf 1}_{\{p<\varphi(x)\}} 
\end{cases}
\end{equation}
\begin{equation*}\begin{cases}\frac{\p}{\p t}\left(\frac{\p v}{\p p}\left(T\left(\frac{x}{\ep}\right)\om,p\right) \chi_-^\ep\right)-\Delta  \chi_-^\ep=-\frac{\p m^\ep}{\p p}(t,x,p,\om)\\
 \chi^\ep_-(0,x,p)={\bf 1}_{\{p>\varphi(x)\}} 
 \end{cases}
 \end{equation*}
 where 
\beqs m^\ep(t,x,p,\om)=\left|\nabla \left[f\left(T\left(\frac{x}{\ep}\right)\om,u^\ep\right)\right]\right|^2\delta_{f\left(T\left(\frac{x}{\ep}\right)\om,u^\ep\right)} (p).\eeqs
Moreover, by Lemma \ref{uepestimlem} we have the following estimates:
\begin{itemize} 
%\item For all $t\geq 0$,  $p_0>0$ and a.e. $\om\in\Om$
%\beq\label{estim1} \int_{\R^N}\left(u^\ep(t,x,\om)-v\left(T\left(\frac{x}{\ep}\right)\om,p_0\right)\right)_+dx\leq  \int_{\R^N}\left(v\left(T\left(\frac{x}{\ep}\right)\om, \varphi(x)\right)-
%v\left(T\left(\frac{x}{\ep}\right)\om,p_0\right)\right)_+dx.\eeq
\item If $p_1<0<p_2\in \R$ are such that $p_1\leq \varphi(x)\leq p_2$ for any $x\in \R^N$, then for all $(t,x)\in\R^+\times\R^N$ and  a.e. $\om\in\Om$
\beq\label{estim2} v\left(T\left(\frac{x}{\ep}\right)\om,p_1\right)\leq u^\ep(t,x,\om)\leq v\left(T\left(\frac{x}{\ep}\right)\om,p_2\right).\eeq
\item Let $n^\ep$ be the positive measure on $\R$ defined by 
$$\int_{\R}\xi(p)\, dn^\ep(p):=\int_{\R^+\times\R^N\times\R\times\Om}\xi(p)m^\ep(t,x,p,\om)\,dt\,dx\,d\mu$$ for $\xi\in C_c(\R)$, 
then 
\beq\label{estim3}n^\ep\le \eta\eeq in the sense of distributions in $\R$, where $\eta\in L^\infty(\R)$ with  compact support, is  defined by
\beq\label{estim4}\begin{split}\eta:=&{\bf 1}_{\{p>0\}}\int_{\R^N\times\Om}\left[v\left(T\left(\frac{x}{\ep}\right)\om,\varphi(x)\right)-v\left(T\left(\frac{x}{\ep}\right)\om,p\right)\right]_+\,dx\,d\mu\\&
+{\bf 1}_{\{p<0\}}\int_{\R^N\times\Om}\left[v\left(T\left(\frac{x}{\ep}\right)\om,\varphi(x)\right)-v\left(T\left(\frac{x}{\ep}\right)\om,p\right)\right]_-\,dx\,d\mu\\&
= {\bf 1}_{\{p>0\}}\int_{\R^N\times\Om}\left[v\left(\om,\varphi(x)\right)-v\left(\om,p\right)\right]_+\,dx\,d\mu\\&
+{\bf 1}_{\{p<0\}}\int_{\R^N\times\Om}\left[v\left(\om,\varphi(x)\right)-v\left(\om,p\right)\right]_-\,dx\,d\mu,
\end{split}
\eeq by using the invariance of $T$ with respect to $\mu$.
\end{itemize}
Remark that  the fact that $\eta$ has compact support  is a consequence of the monotonicity of $v(\om,\cdot)$ and the assumption $\varphi\in L^\infty(\R^N)$. 
\begin{lem}\label{mepconvlem}
There exists a subsequence of the measures $\{m^\ep\}$, still denoted by $\{m^\ep\}$ and a measure $m^0$ on $\R^+\times\R^N\times\R\times\Om$ such that $m^\ep$ converges to $m^0$ weakly in the sense of measures in $\R^+\times\R^N\times\R\times\Om$. Moreover, if $n^0$ is the positive measure on $\R$ defined by 
$$\int_{\R}\xi(p)\, dn^0(p):=\int_{\R^+\times\R^N\times\R\times\Om}\xi(p)m^0(t,x,p,\om)\,dt\,dx\,d\mu$$ for $\xi\in C_c(\R^N),$
then 
\beqs n^0\le \eta\eeqs in the sense of distributions in $\R$, where $\eta\in L^\infty(\R)$ with compact support. 
%is defined by
%\beq\label{eta0}\begin{split}\eta^0:=&{\bf 1}_{p>0}\int_{\R^+\times\R^N\times\Om}\left[v\left(\om,\varphi(x)\right)-v\left(\om,p\right)\right]_+dtdxd\mu\\&
%+{\bf 1}_{p<0}\int_{\R^+\times\R^N\times\Om}\left[v\left(\om,\varphi(x)\right)-v\left(\om,p\right)\right]_-dtdxd\mu.\end{split}\eeq

\end{lem}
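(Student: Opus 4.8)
The plan is to establish the weak convergence of the measures $\{m^\ep\}$ by exploiting the uniform bound provided by \eqref{estim3} and \eqref{estim4}, and then to transfer this bound to the limit measure $m^0$. First I would verify that the sequence $\{m^\ep\}$ has uniformly bounded total mass on $\R^+\times\R^N\times\R\times\Om$. This follows by testing the definition of $n^\ep$ against a sequence of cutoff functions $\xi$ increasing to $\mathbf{1}$: since $\eta\in L^\infty(\R)$ has compact support (as remarked just before the lemma), the inequality \eqref{estim3} gives
\beqs \int_{\R^+\times\R^N\times\R\times\Om} m^\ep(t,x,p,\om)\,dt\,dx\,d\mu = \int_\R dn^\ep(p) \le \int_\R \eta(p)\,dp =: C < \infty, \eeqs
with $C$ independent of $\ep$. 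Thus the $m^\ep$ are nonnegative Radon measures with uniformly bounded mass.

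Next I would invoke the weak-$*$ compactness of bounded sequences of Radon measures (the Banach--Alaoglu theorem applied to the dual of $C_0(\R^+\times\R^N\times\R\times\Om)$, or equivalently Prokhorov's theorem once tightness is checked). The compact support of $\eta$ in the $p$-variable controls concentration in $p$; tightness in $(t,x)$ requires a little more care, but can be obtained from the structure $m^\ep(t,x,p,\om)=|\nabla f|^2\delta_{f}(p)$ together with the a priori bounds, or simply by working with the weak-$*$ topology on the space of finite measures and extracting a subsequential limit $m^0$, which is automatically a nonnegative measure. This yields the first assertion: along a subsequence, $m^\ep \rightharpoonup m^0$ weakly in the sense of measures.

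Finally I would pass the bound $n^\ep \le \eta$ to the limit. For any fixed nonnegative $\xi\in C_c(\R)$, define the test function $\psi(t,x,p,\om)=\xi(p)$ — more precisely, approximate $\xi(p)$ by compactly supported continuous functions in all variables and use a monotone/dominated argument — so that the weak convergence gives
\beqs \int_\R \xi(p)\,dn^0(p) = \int_{\R^+\times\R^N\times\R\times\Om}\xi(p)\,dm^0 = \lim_{\ep\to 0}\int_{\R^+\times\R^N\times\R\times\Om}\xi(p)\,dm^\ep = \lim_{\ep\to 0}\int_\R\xi(p)\,dn^\ep(p). \eeqs
Since $\int_\R \xi\,dn^\ep \le \int_\R \xi\,\eta\,dp$ for every nonnegative $\xi$ by \eqref{estim3}, and the right-hand side does not depend on $\ep$, the limit inherits $\int_\R \xi\,dn^0 \le \int_\R \xi\,\eta\,dp$, i.e.\ $n^0 \le \eta$ in $\mathcal{D}'(\R)$.

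The main obstacle I anticipate is the technical justification that $n^0$ is genuinely the $p$-marginal of $m^0$ and that the weak convergence of $m^\ep$ transfers cleanly to convergence of the marginals $n^\ep$ against test functions depending only on $p$. Because $p$-only test functions $\xi(p)$ are not compactly supported in $(t,x,\om)$, one cannot directly apply the definition of weak-$*$ convergence, which is stated for $\psi\in C_0$ of all variables; this gap must be closed by a truncation argument in $(t,x)$ combined with the uniform mass bound to control the tails, which is where the compact support of $\eta$ and the finiteness of the total mass $C$ do the essential work.
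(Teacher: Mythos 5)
Your proposal is correct and follows essentially the same route as the paper, whose proof consists of the single line that the lemma is ``an immediate consequence of \eqref{estim3} and \eqref{estim4}'': the uniform mass bound coming from $\eta\in L^\infty(\R)$ with compact support, weak-$*$ compactness of bounded nonnegative measures, and passage of the bound $n^\ep\le\eta$ to the limit are exactly the standard details the paper leaves implicit. Your truncation argument in $(t,x)$ correctly handles the only subtle point (the bound direction is preserved even if mass escapes to infinity), so there is no gap.
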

\proof
The lemma is an immediate consequence of  \eqref{estim3} and \eqref{estim4}. % and Proposition  \ref{propstocconv} applied to $w^\ep=w=1$. 
\finedim

%In particular, \eqref{estim2}  implies that the functions $u^\ep$ are bounded uniformly in $\ep$. 

The functions $\{\chi^\ep_+\}_\ep$ and $\{\chi^\ep_-\}_\ep$ are obviously bounded uniformly in $\ep$, then by Proposition  \ref{propstocconv}, there exist $\chi^0_+,\,\chi^0_-\in L^\infty(\R^+\times\R^N\times\R\times\Om)$ such that, up to subsequence, $\{\chi^\ep_+\}$ and $\{\chi^\ep_-\}$ stochastically two-scale converge in the mean respectively to
 $\chi^0_+$ and $\chi^0_-$ as $\ep\rightarrow0$. 
 \begin{lem}\label{ergodiclimit}The functions $\chi^0_+$ and $\chi^0_-$ are independent of $\om\in\Om$.
\end{lem}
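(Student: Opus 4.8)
The plan is to feed the kinetic equation \eqref{chiepeq} a highly oscillating test function scaled by $\ep^2$, so as to isolate the leading $\ep^{-2}$ contribution produced by the fast Laplacian, and then to pass to the stochastic two-scale limit. The resulting identity will say that, for a.e. $(t,x,p)$, the slice $\om\mapsto\chi_+^0(t,x,p,\om)$ solves the stochastic Laplace equation $\sum_{i=1}^N D_i^2\,\chi_+^0=0$ in $\Om$; the Liouville-type Lemma \ref{liouvillelemma} then forces it to be $\mu$-a.e. constant, i.e. independent of $\om$. The argument for $\chi_-^0$ is identical, since $\chi_-^\ep$ solves the companion equation with $-\p_p m^\ep$ on the right-hand side and the right-hand side will be shown to be negligible in the limit.

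Concretely, fix $\phi\in C_c^\infty([0,\infty)\times\R^N\times\R)$ and $\psi\in D^\infty(\Om)$, and take as test function, for a.e. $\om\in\Om$,
$$\Phi_\ep(t,x,p):=\ep^2\,\phi(t,x,p)\,\psi\Big(T\big(\tfrac{x}{\ep}\big)\om\Big).$$
Writing the distributional form of \eqref{chiepeq} against $\Phi_\ep$ and integrating in $\om$, I would expand $\Delta_x\Phi_\ep$ by the chain rule \eqref{Dh=ph}, which (for a.e. $\om$) gives
$$\Delta_x\Phi_\ep=\ep^2(\Delta_x\phi)\,\psi\Big(T\big(\tfrac{x}{\ep}\big)\om\Big)+2\ep\sum_{i=1}^N(\p_{x_i}\phi)\,(D_i\psi)\Big(T\big(\tfrac{x}{\ep}\big)\om\Big)+\phi\,\Big(\sum_{i=1}^N D_i^2\psi\Big)\Big(T\big(\tfrac{x}{\ep}\big)\om\Big),$$
so that only the last term survives as $\ep\to0$, the smoothness class $D^\infty(\Om)$ guaranteeing that $D_i\psi,\,\sum_i D_i^2\psi\in L^\infty(\Om)$.

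Passing to the limit, the time-derivative term is $O(\ep^2)$: its integrand is controlled by (H\ref{4}) (the uniform local integrability of $\frac{\p g}{\p p}(T(\cdot)\om,\cdot)$, here $\frac{\p v}{\p p}$), so the integral is bounded uniformly in $\ep$ and the prefactor $\ep^2$ kills it. The right-hand side $-\ep^2\int_\Om\!\int m^\ep\,\p_p\phi\,\psi(T(\tfrac{x}{\ep})\om)$ likewise vanishes, because $m^\ep$ has uniformly bounded total mass by \eqref{estim3}--\eqref{estim4}, and the two $O(\ep)$ cross terms in $\Delta_x\Phi_\ep$ disappear for the same boundedness reason. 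The surviving term converges, by the stochastic two-scale convergence in the mean of $\chi_+^\ep$ to $\chi_+^0$ (Proposition \ref{propstocconv}, with the admissible product test function $\phi(t,x,p)\big(\sum_i D_i^2\psi\big)(\om)$), to
$$\int_{\R^+\times\R^N\times\R\times\Om}\chi_+^0(t,x,p,\om)\,\phi(t,x,p)\Big(\sum_{i=1}^N D_i^2\psi\Big)(\om)\,dt\,dx\,dp\,d\mu=0.$$

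Since this holds for every $\phi\in C_c^\infty$, varying $\phi$ yields, for a.e. $(t,x,p)$, that $\int_\Om\chi_+^0(t,x,p,\om)\big(\sum_i D_i^2\psi\big)(\om)\,d\mu=0$ for all $\psi\in D^\infty(\Om)$; that is, $\sum_i D_i^2\chi_+^0(t,x,p,\cdot)=0$ in the sense of the stochastic weak derivative. As $\chi_+^0(t,x,p,\cdot)\in L^\infty(\Om)\subset L^2(\Om)$, Lemma \ref{liouvillelemma} then gives that $\chi_+^0(t,x,p,\cdot)$ is $\mu$-a.e. constant, hence independent of $\om$; the same reasoning applies to $\chi_-^0$. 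I expect the main delicate point to be the implication ``for all test functions $\Rightarrow$ for a.e. $(t,x,p)$'': one must collect the exceptional null sets over a countable family of $\psi$'s dense in $D^\infty(\Om)$ (available because $L^2(\Om)$ is separable), so that the stochastic Laplace equation holds on a single full-measure set of $(t,x,p)$ before slicing and invoking Lemma \ref{liouvillelemma}.
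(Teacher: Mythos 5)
Your proposal is correct and follows essentially the same route as the paper: the same $\ep^2$-scaled oscillating test function $\ep^2\phi\,\psi\big(T\big(\tfrac{x}{\ep}\big)\om\big)$ with $\psi\in D^\infty(\Om)$, the same expansion of the fast Laplacian leaving only the $\Delta_\om\psi$ term in the limit, the vanishing of the $m^\ep$ term via its uniformly bounded mass, stochastic two-scale convergence to obtain $\Delta_\om\chi^0_\pm=0$ weakly for a.e.\ $(t,x,p)$, and then Lemma \ref{liouvillelemma}. Your closing remark about collecting the exceptional null sets over a countable family of $\psi$'s (using separability of $L^2(\Om)$) is a point the paper passes over silently, and is a welcome addition.
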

\dim
 In \eqref{chiepeq}, take as test function $\phi(t,x)\ep^2\psi\left(T\left(\frac{x}{\ep}\right)\om\right)\xi(p)$, where 
$\phi\in C^\infty_c(\R^+\times\R^N)$, $\xi\in  C^\infty_c(\R)$ and $\psi\in D^\infty(\Om)$,  $D^\infty(\Om)$ being the set defined in \eqref{Dinfinity}.
 Using that  for a.e. $\om\in\Om$, a.e. $x\in\R^N$, $$\frac{\p}{\p x_i}\left(\psi\left(T\left(\frac{x}{\ep}\right)\om\right)\right)=\frac{1}{\ep} D_i\psi\left(T\left(\frac{x}{\ep}\right)\om\right),$$
 we get 
\beqs\begin{split} &\int_{\R^+\times\R^N\times\R\times\Om}\chi_+^\ep\left[\ep^2 \psi\xi\frac{\p v}{\p p}\partial_t\phi+\ep^2\psi\xi\Delta_x\phi+2\ep\xi\nabla_x\phi D_\om\psi+\phi\xi\Delta_\om \psi\left(T\left(\frac{x}{\ep}\right)\om\right)\right]\,dt\,dx\,dp\,d\mu\\&
= \int_{\R^+\times\R^N\times\R\times\Om}\ep^2 \phi\psi m^\ep \xi'(p)\,dt\,dx\,dp\,d\mu.\end{split}\eeqs
By Lemma \ref{mepconvlem}, the right hand-side of the identity above goes to 0 as $\ep\rightarrow 0$. 
Hence, passing to the limit as $\ep\rightarrow 0$, we find that for almost every $(t,x,p)\in \R^+\times\R^N\times\R^N$, $\chi^0_+(t,x,p,\omega)$ is solution of 
$$\Delta_\om \chi^0_+=0$$ in the sense of distributions.
Then, Lemma \ref{liouvillelemma}  implies that $\chi^0_+$ is independent of $\om$. Similarly, we can prove that $\chi^0_-$  independent of $\om$.
\finedim

\begin{lem}\label{initailcondchi0} For a.e. $(x,p)\in\R^N\times\R$, $\chi^0_+(0,x,p)={\bf 1}_{\{p<\varphi(x)\}} $ and $\chi^0_-(0,x,p)={\bf 1}_{\{p>\varphi(x)\}} $.
\end{lem}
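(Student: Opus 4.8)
The plan is to pass to the stochastic two-scale limit in the weak formulation of the first equation in \eqref{chiepeq}, choosing test functions that do \emph{not} vanish at $t=0$ so as to capture the initial datum. I would fix $\phi\in C^\infty_c([0,\infty)\times\R^N)$ and $\xi\in C^\infty_c(\R)$ and test \eqref{chiepeq} against $\phi(t,x)\xi(p)$. Using a test function with no genuine $\om$-dependence is legitimate precisely because $\chi^0_+$ is independent of $\om$ by Lemma \ref{ergodiclimit}; it also avoids the singular $\ep^{-2}$ corrector terms that forced the $\ep^2$-scaling in the proof of that lemma. Since $\chi^\ep_+$ solves \eqref{chiepeq} distributionally with initial condition ${\bf 1}_{\{p<\varphi(x)\}}$ (Proposition \ref{chieqeqprop}), integration by parts in $t$, $x$ and $p$ yields, for every $\ep>0$,
\begin{equation*}
\begin{split}
&-\int_{\R^N\times\R\times\Om}\frac{\p v}{\p p}\left(T\left(\frac{x}{\ep}\right)\om,p\right){\bf 1}_{\{p<\varphi(x)\}}\phi(0,x)\xi(p)\,dx\,dp\,d\mu\\
&\quad-\int_{\R^+\times\R^N\times\R\times\Om}\frac{\p v}{\p p}\left(T\left(\frac{x}{\ep}\right)\om,p\right)\chi^\ep_+\,\phi_t\,\xi\,dt\,dx\,dp\,d\mu
-\int_{\R^+\times\R^N\times\R\times\Om}\chi^\ep_+\,\Delta_x\phi\,\xi\,dt\,dx\,dp\,d\mu\\
&\qquad=-\int_{\R^+\times\R^N\times\R\times\Om} m^\ep\,\phi\,\xi'\,dt\,dx\,dp\,d\mu,
\end{split}
\end{equation*}
and I would then let $\ep\to 0$ term by term.

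The initial term is in fact \emph{exact} for every $\ep$: by the invariance of $\mu$ under $T$, the inner integral satisfies $\int_\Om \frac{\p v}{\p p}(T(x/\ep)\om,p)\,d\mu=\int_\Om \frac{\p v}{\p p}(\om,p)\,d\mu=\overline{g}'(p)$, where the last equality comes from differentiating \eqref{gbareq} under the integral sign, justified by (H\ref{4}). Hence this term equals $-\int_{\R^N\times\R}\overline{g}'(p){\bf 1}_{\{p<\varphi(x)\}}\phi(0,x)\xi(p)\,dx\,dp$ independently of $\ep$, so no homogenization of the initial data is required. The measure term converges to $-\int m^0\,\phi\,\xi'$ by Lemma \ref{mepconvlem}. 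The $\Delta$-term converges by stochastic two-scale convergence against the $\om$-independent function $\Delta_x\phi\,\xi$, giving $-\int\chi^0_+\,\Delta_x\phi\,\xi$ (the $\Om$-average being trivial since $\chi^0_+$ is $\om$-independent). For the $\p_t$-term I would apply Proposition \ref{propstocconv}, with the oscillation in the $x$-variable, to $\chi^\ep_+$ tested against $\psi(t,x,p,\om)=\phi_t(t,x)\xi(p)\frac{\p v}{\p p}(\om,p)$; this $\psi$ is admissible because (H\ref{4}) guarantees $\frac{\p g}{\p p}(T(\cdot)\om,\cdot)\in L^1_{loc}(\R^N\times\R)$ uniformly in $\om$, so that $\psi(t,x,p,T(x)\om)\in L^1$. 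The limit is $-\int\overline{g}'(p)\,\chi^0_+\,\phi_t\,\xi$, again by $\om$-independence of $\chi^0_+$.

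Collecting the limits, the identity becomes the distributional formulation of the homogenized kinetic equation $\p_t(\overline{g}'(p)\chi^0_+)-\Delta\chi^0_+=\p_p m^0$ carrying the initial contribution $-\int_{\R^N\times\R}\overline{g}'(p){\bf 1}_{\{p<\varphi(x)\}}\phi(0,x)\xi(p)$. Restricting first to $\phi$ supported away from $t=0$ identifies the interior equation; comparing then the boundary terms for general $\phi$ with the trace of $\overline{g}'\chi^0_+$, and cancelling the common factor $\overline{g}'(p)$ (which is strictly positive for a.e.\ $p$ since $\overline{g}$ is strictly increasing by Lemma \ref{fbarproperties}), forces $\chi^0_+(0,x,p)={\bf 1}_{\{p<\varphi(x)\}}$. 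The statement for $\chi^0_-$ follows verbatim from the second equation in \eqref{chiepeq}, whose initial datum is ${\bf 1}_{\{p>\varphi(x)\}}$.

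The main obstacle I anticipate is the passage to the limit in the $\p_t$-term, since there \emph{both} factors $\frac{\p v}{\p p}(T(x/\ep)\om,p)$ and $\chi^\ep_+$ oscillate; this is exactly where stochastic two-scale convergence in the mean, rather than ordinary weak convergence, is indispensable, and where the integrability hypothesis (H\ref{4}) is used to certify that the corrector $\frac{\p v}{\p p}(\om,p)$ is an admissible two-scale test function. A secondary, more technical point is the treatment of the initial layer: the $T$-invariance of $\mu$ trivializes the initial term for every $\ep$, while the strict positivity of $\overline{g}'$ is what ultimately lets us read off the trace of $\chi^0_+$ itself rather than merely that of $\overline{g}'\chi^0_+$.
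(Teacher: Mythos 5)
Your proposal is correct and follows essentially the same route as the paper: test the $\ep$-level kinetic equation \eqref{chiepeq} against test functions not vanishing at $t=0$, pass to the limit using the stochastic two-scale convergence of $\chi^\ep_+$, Lemma \ref{mepconvlem}, and the identity $\int_\Om \frac{\p v}{\p p}(\om,p)\,d\mu=\overline{g}'(p)$, then read off the trace using $\overline{g}'(p)>0$. The extra details you supply (exactness of the initial term via $T$-invariance, admissibility of $\frac{\p v}{\p p}$ as a two-scale test function via (H\ref{4})) are points the paper leaves implicit, not a different argument.
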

\begin{proof}
Since $\chi^\ep_+$ is solution of \eqref{chiepeq}, for any $\psi\in C_c^\infty([0,+\infty)\times\R^N\times\R)$,
\beqs\begin{split}
&\int_{\R^+\times\R^N\times\R\times\Om}\chi_+^\ep\left[\frac{\p v}{\p p}\left(T\left(\frac{x}{\ep}\right)\om,p\right)\partial_t\psi+\Delta_x\psi\right]\,dt\,dx\,dp\,d\mu\\&
\\&= -\int_{\R^N\times\R\times\Om}{\bf 1}_{\{p<\varphi(x)\}} \frac{\p v}{\p p}\left(T\left(\frac{x}{\ep}\right)\om,p\right)\psi(0,x,p)\,dx\,dp\,d\mu+\int_{\R^+\times\R^N\times\R\times\Om} m^\ep \partial_p\psi\,dt\,dx\,dp\,d\mu.
\end{split}
\eeqs
Passing to the limit as $\ep\to0$, by the two-scale convergence of $\chi^\ep_+$ to  $\chi^0_+$, Lemma \ref{mepconvlem} and the identity
$$\int_{\Om}\frac{\p v}{\p p}(\om,p)\,d\mu=\overline{g}'(p),$$
  we get
\beqs\begin{split}
&\int_{\R^+\times\R^N\times\R}\chi_+^0\left[\overline{g}'(p)\partial_t\psi+\Delta_x\psi\right]\,dt\,dx\,dp
\\&=- \int_{\R^N\times\R}{\bf 1}_{\{p<\varphi(x)\}}\overline{g}'(p)\ \psi(0,x,p)\,dx\,dp+\int_{\R^+\times\R^N\times\R\times\Om} m^0 \partial_p\psi\,dt\,dx\,dp\,d\mu.
\end{split}
\eeqs
Since $\overline{g}'(p)>0$ for a.e. $p\in\R$,  this implies  $\chi^0_+(0,x,p)={\bf 1}_{\{p<\varphi(x)\}}$.  Similarly, we get $\chi^0_-(0,x,p)={\bf 1}_{\{p>\varphi(x)\}} $.
\end{proof}
 Now, we want to identify $\chi^0_+$ and $\chi^0_-$. 
 Let us denote $$\overline{\chi}_+(t,x,p):={\bf 1}_{\{p<\overline{f}(\overline{u}(t,x))\}} $$ and 
 $$\overline{\chi}_-(t,x,p):={\bf 1}_{\{p>\overline{f}(\overline{u}(t,x))\}} $$
where $\overline{u}$ is the weak solution of \eqref{uequ}. Since $u_0(x,\om)=g(\om,\varphi(x))$,  recalling the definition \eqref{gbareq} of $\overline{g}(p)=\fs^{-1}(p)$, we see that $\overline{u}$  satisfies the initial condition
$$ \overline{u}(x,0)=\int_\Om g(\om,\varphi(x))\,d\mu=\overline{g}(\varphi(x)),$$ 
with $\varphi\in L^\infty(\R^N)$.
Moreover, by Assumption (H\ref{3}),
$$\overline{g}(\varphi(x))-\overline{g}(0)\in L^1(\R^N).$$
Thus, by  Proposition \ref{chieqeqprop}, we know that $\overline{\chi}_+$ and $\overline{\chi}_-$ are respectively solution in the sense of distribution in $[0,+\infty)\times\R^N\times\R$ of 
\begin{equation}\label{chibareq+}\begin{cases}\frac{\p}{\p t}\left(\overline{g}'(p)\overline{\chi}_+\right)-\Delta \overline{\chi}_+=\frac{\p \overline{m}}{\p p}(t,x,p)\\
\overline{\chi}_+(0,x,p)={\bf 1}_{\{p<\varphi(x)\}}
\end{cases}
 \end{equation}
\begin{equation}\label{chibareq-}\begin{cases}\frac{\p}{\p t}\left(\overline{g}'(p)\overline{\chi}_-\right)-\Delta \overline{\chi}_-=-\frac{\p \overline{m}}{\p p}(t,x,p)\\
\overline{\chi}_-(0,x,p)={\bf 1}_{\{p>\varphi(x)\}}
\end{cases}
\end{equation}
where 
%\beqs \overline{g}(p):=\fs^{-1}(p),\eeqs and
\beqs \overline{m}(t,x,p)=\left|\nabla \left[\fs(\us)\right]\right|^2\delta_{\fs(\us)} (p).\eeqs

\begin{lem} \label{chi0+chibar-=0lem}We have 
$$\chi^0_+\overline{\chi}_-=0\text{ and } \chi^0_-\overline{\chi}_+=0\quad \text{for a.e. }(t,x,p)\in \R^+\times\R^N\times\R.$$
\end{lem}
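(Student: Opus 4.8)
The plan is to run the Chen--Perthame contraction argument \cite{cp} to compare the two-scale limit $\chi^0_+$ with the homogenized kinetic function $\overline{\chi}_-$. To this end I would study the time evolution of the nonnegative quantity
$$Q(t):=\int_{\R^N\times\R}\overline{g}'(p)\,\chi^0_+(t,x,p)\,\overline{\chi}_-(t,x,p)\,dx\,dp,$$
where the weight $\overline{g}'(p)>0$ is the one appearing in both kinetic equations. Since $\overline{g}'>0$ and $\chi^0_+,\overline{\chi}_-\ge 0$, proving $Q\equiv 0$ is equivalent to the first identity of the lemma, and the second one, $\chi^0_-\overline{\chi}_+=0$, follows in the same way after exchanging the roles of the $\pm$ functions. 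First I would record two elementary facts. By Lemma \ref{initailcondchi0} the initial traces satisfy $\chi^0_+(0,x,p)\overline{\chi}_-(0,x,p)=\mathbf{1}_{\{p<\varphi(x)\}}\mathbf{1}_{\{p>\varphi(x)\}}=0$, so $Q(0)=0$; and, being two-scale limits of the $p$-monotone indicators $\chi^\ep_\pm$, the functions $\chi^0_\pm$ inherit $\p_p\chi^0_+\le 0$, $\p_p\chi^0_-\ge 0$ and $\chi^0_++\chi^0_-=1$ a.e. The aim is then to show $\frac{d}{dt}Q\le 0$ in a suitable weak sense, which together with $Q\ge 0$ and $Q(0)=0$ forces $Q\equiv 0$.

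To derive the evolution of $Q$ I would use that $\chi^0_+$ solves, in the sense of distributions and with the averaged defect measure $\overline{m}^0:=\int_\Om m^0\,d\mu\ge 0$, the equation
$$\p_t(\overline{g}'(p)\chi^0_+)-\Delta_x\chi^0_+=\p_p\overline{m}^0,$$
which is exactly the relation obtained in the proof of Lemma \ref{initailcondchi0}, while $\overline{\chi}_-$ solves \eqref{chibareq-}. Testing the first equation against a regularization of $\overline{\chi}_-$ and the second against $\chi^0_+$, adding, and symmetrizing the two Laplacian contributions via $\overline{\chi}_-\Delta_x\chi^0_++\chi^0_+\Delta_x\overline{\chi}_-=\Delta_x(\chi^0_+\overline{\chi}_-)-2\nabla_x\chi^0_+\cdot\nabla_x\overline{\chi}_-$, I obtain, at least formally,
$$\frac{d}{dt}Q=-2\int\nabla_x\chi^0_+\cdot\nabla_x\overline{\chi}_-\,dx\,dp+\int\overline{\chi}_-\,\p_p\overline{m}^0\,dx\,dp-\int\chi^0_+\,\p_p\overline{m}\,dx\,dp.$$
The last two integrals already carry the correct sign: integrating by parts in $p$ and using $\p_p\overline{\chi}_-=\delta_{\overline{f}(\overline{u})}(p)\ge 0$ and $\p_p\chi^0_+\le 0$ against the nonnegative measures $\overline{m}^0,\overline{m}$ gives $\int\overline{\chi}_-\p_p\overline{m}^0=-\int\delta_{\overline{f}(\overline{u})}(p)\,\overline{m}^0\le 0$ and $-\int\chi^0_+\p_p\overline{m}=\int\p_p\chi^0_+\,\overline{m}\le 0$.

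The main obstacle is the parabolic cross term $-2\int\nabla_x\chi^0_+\cdot\nabla_x\overline{\chi}_-$, which has no obvious sign. Since $\overline{\chi}_-=\mathbf{1}_{\{p>\overline{f}(\overline{u})\}}$, its $x$-gradient $\nabla_x\overline{\chi}_-=-\delta_{\overline{f}(\overline{u})}(p)\nabla_x\overline{f}(\overline{u})$ is a singular measure concentrated on the graph $p=\overline{f}(\overline{u})$, precisely where the dissipation measure $\overline{m}$ is supported, and the pointwise $p$-trace of $\nabla_x\chi^0_+$ on that graph is not a priori defined because $\chi^0_+$ may jump there. I would treat this as in \cite{cp}: mollify $\chi^0_+$ and $\overline{\chi}_-$ in $x$ and $p$, carry out the computation at the regularized level where all products are licit, and then complete the square, combining the cross term with the two dissipation integrals $\int\p_p\chi^0_+\,\overline{m}$ and $\int\delta_{\overline{f}(\overline{u})}(p)\,\overline{m}^0$ to form a nonpositive quantity on the level set $p=\overline{f}(\overline{u})$. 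Controlling the commutators produced by the mollification and showing that the leftover nonnegative contributions vanish in the limit, using the flatness of the defect measure $n^0\le\eta$ with $\eta$ of compact support (Lemma \ref{mepconvlem}) and the lower semicontinuity of the two-scale convergence for the quadratic gradient term hidden in $\overline{m}^0$, is the genuinely technical point; this is the mechanism by which the degenerate diffusion is tamed in the kinetic formulation. Once $\frac{d}{dt}Q\le 0$ is established in this regularize-then-pass-to-the-limit sense, the properties $Q\ge 0$ and $Q(0)=0$ yield $Q\equiv 0$, hence $\chi^0_+\overline{\chi}_-=0$, and the symmetric computation gives $\chi^0_-\overline{\chi}_+=0$.
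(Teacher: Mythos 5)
Your strategy runs the contraction argument \emph{after} passing to the two-scale limit, i.e., directly between $\chi^0_+$ and $\ochi$, and this is where the proposal breaks down. The starting equation you write for $\chi^0_+$ is legitimate (it is exactly what the proof of Lemma \ref{initailcondchi0} produces), but the step you defer as ``the genuinely technical point'' --- completing the square between the cross term $-2\int\nabla_x\chi^0_+\cdot\nabla_x\ochi$ and the two dissipation integrals --- is not technical; it is unavailable at the limit level. The Chen--Perthame square-completion needs each dissipation measure to carry the quadratic structure $|\nabla_x F|^2\,\delta_{F}(p)$ attached to the flux of the corresponding solution. For $\ochi$ this holds, since $\overline{m}=|\nabla[\fs(\us)]|^2\delta_{\fs(\us)}(p)$; but the measure $\overline{m}^0=\int_\Om m^0\,d\mu$ appearing in the equation for $\chi^0_+$ is a mere weak limit of the $m^\ep$: the weak limit of $|\nabla_x [f(T(x/\ep)\om,u^\ep)]|^2\,\delta_{f(T(x/\ep)\om,u^\ep)}(p)$ retains positivity and the bound $n^0\le\eta$ of Lemma \ref{mepconvlem}, but loses any identification of the form $|\nabla_x(\cdot)|^2\delta_{(\cdot)}(p)$ for a function built from $\chi^0_+$. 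Without that structure there is nothing to complete the square against, and ``lower semicontinuity'' cannot supply it: relating the weak limit of $\nabla_x [f(T(x/\ep)\om,u^\ep)]$ to $x$-derivatives of an object constructed from $\chi^0_+$ is essentially the homogenization problem itself. Compounding this, your cross term pairs $\nabla_x\chi^0_+$ (an unstructured distribution, since $\chi^0_+$ is a priori only a $[0,1]$-valued $L^\infty$ two-scale limit, not an indicator) against the singular measure $\nabla_x\ochi=-\delta_{\fs(\us)}(p)\nabla_x[\fs(\us)]$ in the \emph{same} variables $(x,p)$; mollification makes each regularized product finite but gives no control of the commutators, precisely because no quadratic dissipation bound for $\chi^0_+$ exists to absorb them.

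The paper's proof avoids both obstructions by reversing the order of operations: contraction first, limit second. It doubles variables in the Kruzhkov fashion, testing the $\ep$-level equation \eqref{chiepeq} (in $(t,x,p)$) against $H_\sigma(p-\fs(\us(s,y)))\Phi$ and the homogenized equation \eqref{chibareq-} (in $(s,y,p)$) against $H_\sigma\left(f\left(T\left(\frac{x}{\ep}\right)\om,u^\ep(t,x)\right)-p\right)\Phi$. At the $\ep$ level the dissipation measure $m^\ep$ has the exact quadratic structure, because $u^\ep$ is a weak solution with $f(T(x/\ep)\om,u^\ep)\in L^2_{loc}(\R^+;H^1_{loc}(\R^N))$; the gradients live in different variables ($x$ versus $y$); and $H_\sigma$ replaces one of the deltas, so every product is licit and the three quadratic terms assemble into $-H'_\sigma\left|\nabla_x[f(T(x/\ep)\om,u^\ep)]-\nabla_y[\fs(\us)]\right|^2\le 0$, which is simply discarded. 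What survives is \emph{linear} in $\chiep$, so the stochastic two-scale convergence, together with the independence of $\chi^0_+$ from $\om$ (Lemma \ref{ergodiclimit}), passes it to the limit, and the conclusion follows from a final choice of test function. Your skeleton ``$Q\ge 0$, $Q(0)=0$, $\frac{d}{dt}Q\le 0$'' is the right one --- it is the same skeleton as the paper's heuristic proof --- but the sign of $\frac{d}{dt}Q$ can only be extracted \emph{before} the $\ep\to 0$ limit, not after.
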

Let us first give the  heuristic proof of the lemma.

{\bf Heuristic proof.}
We want to prove that 
\beq\label{euristic1}\frac{d}{dt}\int_{\R^N\times\R} \overline{g}'(p)\chi^0_+\overline{\chi}_-dxdp\leq0.\eeq Indeed, the previous inequality and 
$$\chi^0_+(0,x,p)\overline{\chi}_-(0,x,p)={\bf 1}_{\{p<\varphi(x)\}}{\bf 1}_{\{p>\varphi(x)\}}=0$$ imply that for $t>0$
$$\int_{\R^N\times\R} \overline{g}'(p)\chi^0_+\overline{\chi}_-\,dx\,dp\leq0.$$ Since $ \overline{g}'>0$, we infer that 
$\chi^0_+\overline{\chi}_-=0$ a.e. in $\R^+\times\R^N\times\R$.

Multiplying equation \eqref{chiepeq} by $\overline{\chi}_-$ and equation \eqref{chibareq-} by $\chi^\ep_+$,  and integrating by parts with respect to  $(x,p)$, we get respectively
\beqs \int_{\R^N\times\R}\left\{\frac{\p v}{\p p}\left(T\left(\frac{x}{\ep}\right)\om,p\right)\p_t \chi_+^\ep\overline{\chi}_-+\nabla  \chi_+^\ep\cdot\nabla\overline{\chi}_-\right\}\,dx\,dp
=-\int_{\R^N\times\R}m^\ep\p_p\overline{\chi}_-\,dx,\eeqs
\beqs \int_{\R^N\times\R}\left\{\overline{g}'(p)\p_t\overline{\chi}_-\chi_+^\ep+\nabla  \chi_+^\ep\cdot\nabla \overline{\chi}_-\right\}\,dx\,dp=\int_{\R^N\times\R} \overline{m}\p_p  \chi_+^\ep \,dx.\eeqs
Summing the two previous inequalities, we obtain 
 \beqs\begin{split}&\int_{\R^N\times\R}\left\{\frac{\p v}{\p p}\left(T\left(\frac{x}{\ep}\right)\om,p\right)\p_t \chi_+^\ep\overline{\chi}_-+\overline{g}'(p)\p_t\overline{\chi}_-\chi_+^\ep\right\}\,dx\,dp
 \\&=\int_{\R^N\times\R}\left\{-2\nabla  \chi_+^\ep\cdot\nabla \overline{\chi}_--m^\ep\p_p\overline{\chi}_-+\overline{m}\p_p  \chi_+^\ep\right\} \,dx\,dp
 \\&=\int_{\R^N\times\R}\left\{2\nabla \left[f\left(T\left(\frac{x}{\ep}\right)\om,u^\ep\right)\right]\cdot \nabla (\fs(\us))\right.\\&-
\left. \left|\nabla \left[f\left(T\left(\frac{x}{\ep}\right)\om,u^\ep\right)\right]\right|^2
 - \left|\nabla \left[\fs(\us)\right]\right|^2\right\}\delta_{f\left(T\left(\frac{x}{\ep}\right)\om,u^\ep\right)} (p)\delta_{\fs(\us)} (p)\,dx\\&
 \leq 0.
 \end{split}\eeqs
 Therefore
 \beqs\int_{\R^N\times\R}\left\{\frac{\p v}{\p p}\left(T\left(\frac{x}{\ep}\right)\om,p\right)\p_t \chi_+^\ep\overline{\chi}_-+\overline{g}'(p)\p_t\overline{\chi}_-\chi_+^\ep\right\}\,dx\,dp\leq0.\eeqs
 Integrating the previous inequality with respect to $\om$ and passing to the limit as $\ep\rightarrow 0$, using the fact that $\chi^0_+$ does not depend on $\om$,
 and that by assumption (H\ref{4})  $$\int_\Om  \frac{\p v}{\p p}(\om,p)\,d\mu=\overline{g}'(p),$$ we get
 $$\int_{\R^N\times\R} (\overline{g}'(p)\p_t \chi_+^0\overline{\chi}_-+\overline{g}'(p)\p_t\overline{\chi}_-\chi_+^0)\,dx\,dp\le0,$$
 which is  \eqref{euristic1}.

\dims {\bf of Lemma \ref{chi0+chibar-=0lem}} 
 
Let us prove that $\chi^0_+\overline{\chi}_-=0$ a.e. in $\R^+\times\R^N\times\R.$ We use the Kruzhkov's doubling variables method \cite{krus}.
 Let $0\le \phi\in C_c^\infty([0,\infty)\times\R^N)$ and let $\psi_m$, $\theta_n$ be classical smooth, compactly supported, approximations of identity in $\R^N$ and $\R$. For  $(t,x,s,y)\in(\R^+\times\R^N)^2$,  let us define
\beqs\Phi(t,x,s,y):=\phi\left(\frac{t+s}{2},\frac{x+y}{2}\right)\psi_m\left(\frac{x-y}{2}\right)\theta_n\left(\frac{t-s}{2}\right).\eeqs 
The functions $\psi_m$ and $\theta_n$ satisfy respectively, 
\beqs \int_{\R^{N}}g(y)\psi_m\left(\frac{x-y}{2}\right)\,dy\to g(x)\quad\text{as }m\to+\infty\eeqs
for a.e. $x$ and for any $g\in L^1(\R^N)$, 
\beqs \int_{\R}h(s)\theta_n\left(\frac{t-s}{2}\right)\,ds\to h(t),\quad\text{as }n\to+\infty\eeqs
for a.e. $t$ and for any $h\in L^1(\R).$
Let  $H_\sigma(s)$ be the  approximation of the Heaviside function given by \eqref{heaviside}.
Now  take $H_\sigma(p-\fs(\us(s,y)))\Phi(t,x,s,y)$ as test function in \eqref{chiepeq}, and integrate first in $(t,x,p)\in\R^+\times\R^N\times\R$ and then in $(s,y)\in\R^+\times\R^N$. 
Remark that even if $H_\sigma(p-\fs(\us(s,y)))$ does not have compact support,  by \eqref{estim2} $\chiep(t,x,p) H_\sigma(p-\fs(\us(s,y)))$ has compact support as a function of $p$, for $(t,x)$ and $(s,y)$ belonging to compact subsets of $[0,+\infty)\times\R^N$.  For simplicity of notation, in what follows, we will not write the domains of integration, and we will skip the dependence on $\om\in \Om$  denoting
$$v\left(\frac{x}{\ep},p\right):=v\left(T\left(\frac{x}{\ep}\right)\om,p\right),\quad f\left(\frac{x}{\ep},u^\ep\right):=f\left(T\left(\frac{x}{\ep}\right)\om,u^\ep\right).$$ 
 Then,  a.e. in $\Om$, we have 
\beq\label{primaeqlemm}\begin{split} &-\int\frac{\p v}{\p p}\left(\frac{x}{\ep},p\right)\chiep(t,x,p) H_\sigma(p-\fs(\us(s,y)))\partial_t\Phi\,dp\,dt\,dx\,ds\,dy\\
&-\int\frac{\p v}{\p p}\left(\frac{x}{\ep},p\right) {\bf 1}_{\{p<\varphi(x)\}}H_\sigma(p-\fs(\us(s,y)))\Phi(0,x,s,y)\,dp\,dx\,ds\,dy\\
&=\int \chiep(t,x,p) H_\sigma(p-\fs(\us(s,y)))\Delta_x\Phi dp\,dt\,dx\,ds\,dy\\&
-\int H_\s'\left(f\left(\frac{x}{\ep},u^\ep(t,x)\right)-\fs(\us(s,y))\right)\left|\nabla \left[f\left(\frac{x}{\ep},u^\ep(t,x)\right)\right]\right|^2\Phi \,dt\,dx\,ds\,dy.
\end{split}\eeq
It is easy to check that $$\Delta_x\Phi+\Delta_y\Phi+2\text{div}_y\nabla_x\Phi=\theta_n\psi_n\Delta\phi.$$
Hence, the first term in the  right-hand side of \eqref{primaeqlemm}, becomes
\beqs\begin{split} &\int \chiep(t,x,p) H_\sigma(p-\fs(\us(s,y)))\Delta_x\Phi \,dp\,dt\,dx\,ds\,dy\\&
=\int dp\,dt\,dx\,ds\, \chiep(t,x,p) \int H_\sigma(p-\fs(\us(s,y)))(-2\text{div}_y\nabla_x\Phi-\Delta_y\Phi+\theta_n\psi_n\Delta\phi)\, dy
\end{split}\eeqs
and by integrating by parts 
\beqs\begin{split} &
\int dp\,dt\,dx\,ds\, \chiep(t,x,p) \int H_\sigma(p-\fs(\us(s,y)))(-2\text{div}_y\nabla_x\Phi) \,dy\\&
=-\int dp\,dt\,dx\,ds\, \chiep(t,x,p) \int 2H'_\s(p-\fs(\us(s,y)))\nabla_y(\fs(\us(s,y)))\cdot \nabla_x\Phi \,dy\\&
=-\int \,dt\,dx\,ds\,dy 2\nabla_y(\fs(\us(s,y)))\cdot \nabla_x\Phi \int\chiep(t,x,p) H'_\s(p-\fs(\us(s,y)))\,dp\\&
=-\int  dtdsdy \int 2H_\s\left(f\left(\frac{x}{\ep},u^\ep(t,x)\right)-\fs(\us(s,y))\right)\nabla_y(\fs(\us(s,y)))\cdot \nabla_x\Phi \,dx\\&
=\int 2\text{div}_x\left[H_\s\left(f\left(\frac{x}{\ep},u^\ep(t,x)\right)-\fs(\us(s,y))\right)\nabla_y(\fs(\us(s,y)))\right]\Phi \,dt\,dx\,ds\,dy\\&
=\int 2H'_\s\left(f\left(\frac{x}{\ep},u^\ep(t,x)\right)-\fs(\us(s,y))\right)\nabla_x\left[f\left(\frac{x}{\ep},u^\ep(t,x)\right)\right]\cdot \nabla_y(\fs(\us(s,y)))\Phi \,dt\,dx\,ds\,dy.\\&
\end{split}\eeqs
We infer that
\beq\label{secondeqlemm}\begin{split} &\int \chiep(t,x,p) H_\sigma(p-\fs(\us(s,y)))\Delta_x\Phi \,dp\,dt\,dx\,ds\,dy\\&
=\int 2H'_\s\left(f\left(\frac{x}{\ep},u^\ep(t,x)\right)-\fs(\us(s,y))\right)\nabla_x\left[f\left(\frac{x}{\ep},u^\ep(t,x)\right)\right]\cdot \nabla_y(\fs(\us(s,y)))\Phi \,dt\,dx\,ds\,dy\\&
+\int \chiep(t,x,p) H_\sigma(p-\fs(\us(s,y)))(-\Delta_y\Phi +\theta_n\psi_n\Delta\phi) \,dp\,dt\,dx\,ds\,dy.\\&
\end{split}\eeq

From \eqref{primaeqlemm} and \eqref{secondeqlemm} we conclude that
\beq\label{xep+xibar-lemeq1}\begin{split} &-\int\frac{\p v}{\p p}\left(\frac{x}{\ep},p\right)\chiep(t,x,p) H_\sigma(p-\fs(\us(s,y)))\partial_t\Phi\,dp\,dt\,dx\,ds\,dy\\
&-\int\frac{\p v}{\p p}\left(\frac{x}{\ep},p\right) {\bf 1}_{\{p<\varphi(x)\}}H_\sigma(p-\fs(\us(s,y)))\Phi(0,x,s,y)\,dp\,dx\,ds\,dy\\&=
\int2H'_\s\left(f\left(\frac{x}{\ep},u^\ep(t,x)\right)-\fs(\us(s,y))\right)\nabla_x\left[f\left(\frac{x}{\ep},u^\ep(t,x)\right)\right]\cdot \nabla_y(\fs(\us(s,y)))\Phi \,dt\,dx\,ds\,dy\\&
+\int \chiep(t,x,p) H_\sigma(p-\fs(\us(s,y)))(-\Delta_y\Phi +\theta_n\psi_n\Delta\phi)  \,dp\,dt\,dx\,ds\,dy\\&
-\int H_\s'\left(f\left(\frac{x}{\ep},u^\ep(t,x)\right)-\fs(\us(s,y))\right)\left|\nabla_x \left[f\left(\frac{x}{\ep},u^\ep(t,x)\right)\right]\right|^2\Phi \,dt\,dx\,ds\,dy.
\end{split}\eeq

 Next, take $H_\sigma\left(f\left(\frac{x}{\ep},u^\ep(t,x)\right)-p\right)\Phi(t,x,s,y)$, as test function for the equation \eqref{chibareq-}  and integrate first in $(s,y,p)$ then in $(t,x)$. We have 
 \beq\label{xep+xibar-lemeq2}\begin{split} &-\int \overline{g}'(p)\ochi(s,y,p) H_\sigma\left(f\left(\frac{x}{\ep},u^\ep(t,x)\right)-p\right)\partial_s\Phi\,dp\,ds\,dy\,dt\,dx\\
&-\int\overline{g}'(p) {\bf 1}_{\{p>\varphi(y)\}}H_\sigma\left(f\left(\frac{x}{\ep},u^\ep(t,x)\right)-p\right)\Phi(t,x,0,y)\,dp\,dy\,dt\,dx\\=
&\int \ochi(s,y,p)H_\sigma\left(f\left(\frac{x}{\ep},u^\ep(t,x)\right)-p\right)\Delta_y\Phi \,dp\,ds\,dy\,dt\,dx\\&
-\int H_\s'\left(f\left(\frac{x}{\ep},u^\ep(t,x)\right)-\fs(\us(s,y))\right)|\nabla_y[\fs(\us(s,y))]|^2\Phi\, ds\,dy\,dt\,dx.
\end{split}\eeq
 
Summing \eqref{xep+xibar-lemeq1} and \eqref{xep+xibar-lemeq2}, we get 
\beqs\begin{split} &-\int\frac{\p v}{\p p}\left(\frac{x}{\ep},p\right)\chiep(t,x,p) H_\sigma(p-\fs(\us(s,y)))\partial_t\Phi\,dp\,dt\,dx\,ds\,dy\\&
-\int \overline{g}'(p)\ochi(s,y,p) H_\sigma\left(f\left(\frac{x}{\ep},u^\ep(t,x)\right)-p\right)\partial_s\Phi\,dp\,dt\,dx\,ds\,dy\\
&-\int\frac{\p v}{\p p}\left(\frac{x}{\ep},p\right) {\bf 1}_{\{p<\varphi(x)\}}H_\sigma(p-\fs(\us(s,y)))\Phi(0,x,s,y)\,dp\,dx\,ds\,dy\\
&-\int\overline{g}'(p) {\bf 1}_{\{p>\varphi(y)\}}H_\s\left(f\left(\frac{x}{\ep},u^\ep(t,x)\right)-p\right)\Phi(t,x,0,y)\,dp\,dt\,dx\,dy\\
&=\int \left\{\ochi(s,y,p)H_\sigma\left(f\left(\frac{x}{\ep},u^\ep(t,x)\right)-p\right)-\chiep(t,x,p) H_\sigma(p-\fs(\us(s,y)))\right\}\Delta_y\Phi \,dp\,dt\,dx\,ds\,dy\\&
-\int  H_\s'\left(f\left(\frac{x}{\ep},u^\ep(t,x)\right)-\fs(\us(s,y))\right)\left\{\left|\nabla_x \left[f\left(\frac{x}{\ep},u^\ep(t,x)\right)\right]\right|^2 +|\nabla_y[\fs(\us(s,y))]|^2 \right.\\&
\left.-2\nabla_x\left[f\left(\frac{x}{\ep},u^\ep(t,x)\right)\right]\cdot \nabla_y(\fs(\us(s,y)))\right\}\Phi \,dt\,dx\,ds\,dy\\&
+\int \chiep(t,x,p) H_\sigma(p-\fs(\us(s,y)))\theta_n\psi_n\Delta\phi \,dp\,dt\,dx\,ds\,dy\\&
\le \int \left\{\ochi(s,y,p)H_\sigma\left(f\left(\frac{x}{\ep},u^\ep(t,x)\right)-p\right)-\chiep(t,x,p) H_\sigma(p-\fs(\us(s,y)))\right\}\Delta_y\Phi \,dp\,dt\,dx\,ds\,dy\\&
+\int \chiep(t,x,p) H_\sigma(p-\fs(\us(s,y)))\theta_n\psi_n\Delta\phi \,dp\,dt\,dx\,ds\,dy.
\end{split}\eeqs
Then, letting $\delta$ go to 0, and integrating over $\Om$,  we get

\beq\label{mainthm3}\begin{split} &-\int\frac{\p v}{\p p}\left(T\left(\frac{x}{\ep}\right)\om,p\right)\chiep(t,x,p,\om) \ochi(s,y,p)\partial_t\Phi\,dp\,dt\,dx\,ds\,dy\,d\mu\\&
-\int \overline{g}'(p)\ochi(s,y,p) \chiep(t,x,p,\om)\partial_s\Phi\,dp\,dy\,ds\,dx\,dt\,d\mu\\
&\le \int\frac{\p v}{\p p}\left(T\left(\frac{x}{\ep}\right)\om,p\right) {\bf 1}_{\{p<\varphi(x)\}}\ochi(s,y,p)\Phi(0,x,s,y)\,dp\,dx\,ds\,dy\,d\mu\\&
+\int\overline{g}'(p) {\bf 1}_{\{p>\varphi(y)\}}\chiep(t,x,p,\om)\Phi(t,x,0,y)\,dp\,dt\,dx\,dy\,d\mu\\&
+\int \chiep(t,x,p,\om)  \ochi(s,y,p)\theta_n\psi_n\Delta\phi \,dp\,dt\,dx\,ds\,dy\,d\mu\\&
=: I_1+I_2+\int \chiep(t,x,p,\om)  \ochi(s,y,p)\theta_n\psi_n\Delta\phi \,dp\,dt\,dx\,ds\,dy\,d\mu.
\end{split}\eeq
%Notice that by the invariance of $T$ and the definition  \eqref{gbareq} of $\overline{g}$, for a.e. $p\in\R$,
%\begin{equation}\label{overlineginva}\int_\Om \frac{\p v}{\p p}\left(T\left(\frac{x}{\ep}\right)\om,p\right)\,d\mu=\int_\Om \frac{\p v}{\p p}\left(\om,p\right)\,d\mu=\overline{g}'(p).\eeq
Let us estimate the right hand-side of \eqref{mainthm3}.
%Using the monotonicity of $\overline{g}$ and $v\left(T\left(\frac{x}{\ep}\right)\om,\cdot\right)$, the properties of $\theta_n$ and
%that $$v\left(T\left(\frac{x}{\ep}\right)\om, f\left(T\left(\frac{x}{\ep}\right)\om,u^\ep\right)\right)=u^\ep,$$ 
 Recalling that $\ochi(s,y,p)={\bf 1}_{\{p>\overline{f}(\overline{u}(s,y))\}}$ and using \eqref{overlineginva}, we can estimate the first term in the right hand-side of \eqref{mainthm3} as follows
\beqs\begin{split} I_1&= \int\frac{\p v}{\p p}\left(T\left(\frac{x}{\ep}\right)\om,p\right) {\bf 1}_{\{p<\varphi(x)\}}\ochi(s,y,p)\Phi(0,x,s,y)\,dp\,dx\,ds\,dy\,d\mu\\&
=\int \,dp\,dx\,ds\,dy\,{\bf 1}_{\{p<\varphi(x)\}}\ochi(s,y,p)\Phi(0,x,s,y)\int \frac{\p v}{\p p}\left(T\left(\frac{x}{\ep}\right)\om,p\right) \,d\mu\\&
=\int \,dx\,ds\,dy\,\Phi(0,x,s,y)\int_{\overline{f}(\overline{u}(s,y))}^{\varphi(x)}\overline{g}'(p)\,dp\\&
=\int (\overline{g}(\varphi(x))-\overline{u}(s,y)))_+ \Phi(0,x,s,y)\,dx\,ds\,dy.
\end{split}\eeqs
Thus $I_1$ is actually independent of $\ep$. Recalling  that $\overline{u}(t,x)-\overline{g}(0)\in C([0,+\infty);L^1(\R^N))$ and $\overline{u}(0,x)=\overline{g}(\varphi(x))$, we see that letting  $n,m\to+\infty$, 
\beq\label{I1}I_1\to0.\eeq
Next, by letting first $\ep\to0$ and then $n,m\to+\infty$, by Lemma \ref{initailcondchi0}, we get 
\beq\label{I2}\begin{split}  
I_2\to \int\overline{g}'(p) {\bf 1}_{\{p>\varphi(x)\}}{\bf 1}_{\{p<\varphi(x)\}}\phi(0,x)\,dp\,dt\,dx=0.
\end{split}\eeq
Then, from \eqref{mainthm3}, we have

\beq\label{mainthm6}\begin{split} &-\int\frac{\p v}{\p p}\left(T\left(\frac{x}{\ep}\right)\om,p\right)\chiep(t,x,p,\om) \ochi(s,y,p)\psi_m\left(\frac{1}{2}\phi_t\theta_n+\frac{1}{2}\phi\theta_n'\right)\,dp\,dt\,dx\,ds\,dy\,d\mu\\&
-\int \overline{g}'(p)\ochi(s,y,p) \chiep(t,x,p,\om)\psi_m\left(\frac{1}{2}\phi_t\theta_n-\frac{1}{2}\phi\theta_n'\right)\,dp\,dt\,dx\,ds\,dy\,d\mu\\&
\le I_1+I_2
+\int \chiep(t,x,p,\om)  \ochi(s,y,p)\theta_n\psi_n\Delta\phi \,dp\,dt\,dx\,ds\,dy\,d\mu.
\end{split}\eeq
Letting first  $\ep\rightarrow0$ then $n,m\rightarrow\infty$, by the stochastically two-scale convergence of  $\chiep$ to $\chi^0_+$, using that $\chi^0_+$ is independent of $\om$ and recalling that 
\begin{equation*}\label{overlineginva}\int_\Om \frac{\p v}{\p p}\left(\om,p\right)\,d\mu=\overline{g}'(p),\eeqs
 we obtain
%\beqs \begin{split} 
%&-2\int \overline{g}'(p)\chi^0_+(t,x,p) \ochi(s,y,p)\phi_t\left(\frac{t+s}{2},\frac{x+y}{2}\right)\psi_m\left(\frac{x-y}{2}\right)\theta_n\left(\frac{t-s}{2}\right)dpdxdtdyds\\&
%\le \frac{C_m}{n}
%+C\int (\varphi(x)-\varphi(y))_+\phi\left(0,\frac{x+y}{2}\right)\psi_m\left(\frac{x-y}{2}\right)dxdy\\&
%+\int \chiep(t,x,p,\om)  \ochi(s,y,p)\theta_n\psi_m\Delta\phi dpdydsdxdt.
%\end{split}\eeqs
%Hence, letting first  $n\rightarrow\infty$ and then  $m\rightarrow\infty$, we obtain that for any $0\le \phi\in C_c^\infty([0,\infty)\times\R^N)$
\beqs - \int \overline{g}'(p)\chi^0_+(t,x,p) \ochi(t,x,p)\p_t\phi(t,x)dpdtdx\leq C\int|\Delta\phi(t,x)|dt\,dx.\eeqs
Finally, taking $\phi(t,x)=\phi_1(t)\phi_2(x)$ with $\phi_1'<0$ in $[0,t]$, $\phi_2$ with compact support and converging to 1 in $C^2(\R^N)$,  we get that a.e., 
$$0\le -  \overline{g}'(p)\chi^0_+(t,x,p) \ochi(t,x,p)\leq0.$$
The previous inequalities and  $\overline{g}'>0$ imply that  $\chi^0_+ \ochi=0$  a.e.

 Similarly, we can prove that $\chi^0_-\overline{\chi}_+$ a.e., and this concludes the proof of the lemma.
\finedim

\begin{cor}\label{x0=xbarcor}We have
\beqs\chi^0_+=\overline{\chi}_+\quad\text{and}\quad \chi^0_-=\overline{\chi}_- \quad \text{for a.e. }(t,x,p)\in \R^+\times\R^N\times\R.\eeqs
\end{cor}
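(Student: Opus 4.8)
The plan is to deduce both identities from the vanishing of the products established in Lemma \ref{chi0+chibar-=0lem}, combined with two complementarity relations: one for the pair $(\chi^0_+,\chi^0_-)$ and one for the pair $(\overline{\chi}_+,\overline{\chi}_-)$. The substantive analysis having already been carried out in Lemma \ref{chi0+chibar-=0lem}, what remains is an algebraic manipulation of indicator-type functions.

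First I would record the complementarity at the level of the oscillating functions. Since the event $\{p=f(T(\frac{x}{\ep})\om,u^\ep)\}$ is null for the measure $dt\,dx\,dp\,d\mu$, we have $\chiep+\chi^\ep_-=1$ for a.e. $(t,x,p,\om)$. Testing this identity against an admissible $\psi(x,T(\frac{x}{\ep})\om)$ and passing to the limit, I would use the linearity of the stochastic two-scale convergence in the mean together with the fact that the constant sequence $w^\ep\equiv 1$ two-scale converges to $1$; the latter is immediate from the $T$-invariance of $\mu$, which gives $\int_\Om\psi(x,T(\frac{x}{\ep})\om)\,d\mu=\int_\Om\psi(x,\om)\,d\mu$ for each fixed $x$. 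Along the common subsequence on which both limits exist this yields $\chi^0_++\chi^0_-=1$ a.e. The same null-set argument applied to $\overline{\chi}_+$ and $\overline{\chi}_-$ gives directly $\overline{\chi}_++\overline{\chi}_-=1$ a.e.

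Then I would split $\R^+\times\R^N\times\R$ according to the sign of $p-\overline{f}(\overline{u}(t,x))$. On $\{p>\overline{f}(\overline{u})\}$ one has $\overline{\chi}_-=1$, so the relation $\chi^0_+\overline{\chi}_-=0$ from Lemma \ref{chi0+chibar-=0lem} forces $\chi^0_+=0=\overline{\chi}_+$. On $\{p<\overline{f}(\overline{u})\}$ one has $\overline{\chi}_+=1$, so $\chi^0_-\overline{\chi}_+=0$ forces $\chi^0_-=0$, and the complementarity $\chi^0_+=1-\chi^0_-$ then gives $\chi^0_+=1=\overline{\chi}_+$. Since $\{p=\overline{f}(\overline{u})\}$ is null, this establishes $\chi^0_+=\overline{\chi}_+$ a.e.; the identity $\chi^0_-=\overline{\chi}_-$ follows at once from $\chi^0_-=1-\chi^0_+=1-\overline{\chi}_+=\overline{\chi}_-$.

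I expect no serious obstacle. The only point requiring a touch of care is the implication $\chiep+\chi^\ep_-=1\Rightarrow\chi^0_++\chi^0_-=1$, where one must justify that the constant $1$ two-scale converges to itself (a consequence of the $T$-invariance of $\mu$) and that two-scale limits along a fixed subsequence are additive; both are routine.
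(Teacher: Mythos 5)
Your proof is correct, and it reaches the conclusion by a genuinely different manipulation than the paper. The paper's proof is purely algebraic: it expands $(\chi^0_+-\overline{\chi}_+)^2$ twice, once writing $\overline{\chi}_+=1-\overline{\chi}_-$ and once writing $\chi^0_+=1-\chi^0_-$, and uses Lemma \ref{chi0+chibar-=0lem} together with $0\le\chi^0_\pm\le1$, $\chi^0_++\chi^0_-=1$ and $\overline{\chi}_\pm^2=\overline{\chi}_\pm$ to bound the square above by both $\overline{\chi}_+-\chi^0_+$ and $-(\overline{\chi}_+-\chi^0_+)$, forcing it to vanish. You instead exploit the explicit indicator structure of $\overline{\chi}_\pm$ and split the domain by the sign of $p-\overline{f}(\overline{u}(t,x))$: on $\{p>\overline{f}(\overline{u})\}$ the identity $\chi^0_+\overline{\chi}_-=0$ gives $\chi^0_+=0$ directly, and on $\{p<\overline{f}(\overline{u})\}$ the identity $\chi^0_-\overline{\chi}_+=0$ plus complementarity gives $\chi^0_+=1$; this is arguably more elementary, since it never needs the bounds $0\le\chi^0_\pm\le1$ (which the paper invokes, implicitly relying on the fact that two-scale limits of indicators take values in $[0,1]$). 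A further point in your favor is that you actually justify the complementarity $\chi^0_++\chi^0_-=1$ (via two-scale convergence of the constant $1$, using the $T$-invariance of $\mu$, and linearity of the limit along a common subsequence), whereas the paper simply asserts it with "We know that a.e."; both your justification and your observation that the sets $\{p=f(T(\frac{x}{\ep})\om,u^\ep)\}$ and $\{p=\overline{f}(\overline{u})\}$ are null are correct and routine, as you say. The trade-off: the paper's square-expansion argument is insensitive to where $\overline{\chi}_\pm$ equal $0$ or $1$ and would survive in settings where only the product identities and complementarity are known, while your case analysis is shorter and more transparent given that $\overline{\chi}_\pm$ are honest indicator functions of explicit sets.
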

\proof
We know that a.e. $$0\leq\chi^0_+,\chi^0_-\le1,\quad \chi^0_++\chi^0_-=1,$$ and, by definition of $\overline{\chi}_+$ and $\overline{\chi}_-$
$$\overline{\chi}_+^2=\overline{\chi}_+,\quad \overline{\chi}_-^2=\overline{\chi}_-, \quad \overline{\chi}_++\overline{\chi}_-=1.$$
Using Lemma \ref{chi0+chibar-=0lem} and the previous properties, we get 
\beqs(\chi^0_+-\overline{\chi}_+)^2=(\chi^0_+)^2+(\overline{\chi}_+)^2-2\chi^0_+\overline{\chi}_+=(\chi^0_+)^2+(\overline{\chi}_+)^2-2\chi^0_+(1-\overline{\chi}_-)
\le \overline{\chi}_+-\chi^0_+.\eeqs
On the other hand 
\beqs(\chi^0_+-\overline{\chi}_+)^2=(\chi^0_+)^2+(\overline{\chi}_+)^2-(1-\chi^0_-)2\overline{\chi}_+\leq -( \overline{\chi}_+-\chi^0_+).\eeqs
Therefore we get
$(\chi^0_+-\overline{\chi}_+)^2=0$, i.e., $\chi^0_+=\overline{\chi}_+$. In the same way we can prove that $\chi^0_-=\overline{\chi}_-$.
\finedim
%%%%%%%%%%%%%%%%%%%%%%%%%%%%%%%%%%%%%%%%%%%%%%%%%

We are now ready to conclude the proof of Theorem \ref{mainthm}.  

Take $\phi=\phi(t,x)\in C^\infty_c(\R^+\times\R^N)$ and fix $p_0>0$.  Corollary \ref{x0=xbarcor} and the uniqueness of the kinetic solution of  the limit problem  \eqref{uequ} proven in \cite{cp}, imply that the whole sequence  $\chi^\ep_+(t,x,p,\om)$ two-scale converges to $\chi^0_+={\bf 1}_{\{p<\overline{f}(\overline{u}(t,x))\}}$. Therefore, 
$$\int_{\R^+\times\R^{N}\times\R\times\Om}{\bf 1}_{\{\overline{f}(\overline{u}(t,x))<p<p_0\}}\chi_+^\ep(t,x,p,\om)\frac{\p v}{\p p}\left(T\left(\frac{x}{\ep}\right)\om,p\right)\phi(t,x)dtdxdpd\mu\rightarrow 0$$ as $\ep\rightarrow 0$. The left-hand side above can be rewritten as follows:
\beqs\begin{split}&\int_{\R^+\times\R^{N}\times\R\times\Om}{\bf 1}_{\{\overline{f}(\overline{u}(t,x))<p<p_0\}}\chi_+^\ep(t,x,p,\om)\frac{\p v}{\p p}\left(T\left(\frac{x}{\ep}\right)\om,p\right)\phi(t,x)dt\,dx\,dp\,d\mu\\&
=\int_{\R^+\times\R^{N}\times\Om}dt\,dx\,d\mu\,\phi(t,x)\int_{\overline{f}(\overline{u})}^{\min\left[f\left(T\left(\frac{x}{\ep}\right)\om,u^\ep\right),p_0\right]}\frac{\p v}{\p p}\left(T\left(\frac{x}{\ep}\right)\om,p\right)dp
\\& =\int_{\R^+\times\R^{N}\times\Om}\left[\min\left[u^\ep(t,x,\om), v\left(T\left(\frac{x}{\ep}\right)\om,p_0\right)\right]-v\left({\small T\left(\frac{x}{\ep}\right)}\om,\overline{f}(\overline{u}(t,x))\right)\right]_+\phi(t,x)dtdxd\mu.
\end{split}\eeqs
Now, from \eqref{estim2} we infer that $\min\left[u^\ep(t,x,\om), v\left(T\left(\frac{x}{\ep}\right)\om,p_0\right)\right]=u^\ep(t,x,\om)$, provide $p_0>|\varphi|_\infty$. 
We deduce that, up to subsequence $$\int_{\R^+\times\R^{N}\times\Om}\left[u^\ep(t,x,\om)-v\left({\small T\left(\frac{x}{\ep}\right)}\om,\overline{f}(\overline{u}(t,x))\right)\right]_+\phi(t,x)dtdxd\mu\rightarrow 0$$ as $\ep\rightarrow 0$. 

Similarly, using the two-scale convergence of $\chi^\ep_-(t,x,p,\om)$ to $\chi^0_-={\bf 1}_{\{p>\overline{f}(\overline{u}(t,x))\}}$, we can prove that the previous limit with the positive part replaced by the negative one holds true. Hence, we get 
$$\int_{\R^+\times\R^{N}\times\Om}\left|u^\ep(t,x,\om)-v\left({\small T\left(\frac{x}{\ep}\right)}\om,\overline{f}(\overline{u}(t,x))\right)\right|\phi(t,x)dtdxd\mu\rightarrow 0$$ as $\ep\rightarrow 0$ and this proves \eqref{mainthmresult}.
%This implies that for any compact $K\subset \R^+\times\R^N$, 
%\beq\label{penultimamthm}\int_K\left|u^\ep(t,x,\om)-v\left({\small T\left(\frac{x}{\ep}\right)}\om,\overline{f}(\overline{u}(t,x))\right)\right|dtdx\rightarrow0\eeq  as $\ep \rightarrow 0$ in 
%$L^1(\Om)$, and then,  there exists a subsequence  (depending on $K$) such that the convergence is  a.e. in $\Om$. By a diagonalization argument, we can find a subsequence, such %that \eqref{penultimamthm} holds true for any compact $K\subset \R^+\times\R^N$. We have proven \eqref{mainthmresult}.
 
 Now, let us show the weak star convergence of $\int_\Om u^\ep d\mu$ to $\overline{u}$. 
 For  any $\psi\in C_c(\R^+\times\R^N)$, we have 
  \beq\label{ultimathmmain}\begin{split}\lim_{\ep\rightarrow0}\int_{K}\int_\Om u^\ep(t,x,\om)\psi(t,x)d\mu dxdt&
 =\lim_{\ep\rightarrow0}\int_\Om\int_{K}v\left({\small T\left(\frac{x}{\ep}\right)}\om,\overline{f}(\overline{u}(t,x))\right)\psi(t,x)dtdxd\mu\\&
 =\int_Kdtdx\psi(t,x)\int_\Om v(\om,\overline{f}(\overline{u}(t,x))d\mu\\&
 =\int_K\overline{u}(t,x)\psi(t,x)dtdx.
 \end{split}\eeq
 %To show that the  convergence in the weak star topology of $\R^+\times\R^N$ holds a.e. $\om\in\Om$, we observe that for any $y\in\R^N$
% $$u^\ep(t,x,T(y)\om)=u^\ep(t,x+\ep y,\om).$$ This implies that, if we call
 %$$I^\ep(\om):=\int_Ku^\ep(t,x,\om)\psi(t,x)dxdt,$$ then $$|I^\ep|\le C\quad\text{and}\quad |D_\om I^\ep|\leq C\ep.$$
%We infer that, up to subsequence $I^\ep(\om)$ convergence to a constant  in $\Om$. Hence, from \eqref{ultimathmmain}, we conclude that for a.e. $\Om$,
%$$\lim_{\ep\rightarrow0}\int_{K}u^\ep(t,x,\om)\psi(t,x)dxdt=\int_K\overline{u}(t,x)\psi(t,x)dxdt.$$

This concludes the proof of Theorem \ref{mainthm}.

\end{document}